\newtheorem{ourthm}{Theorem}
\newtheorem*{addC}{Addendum to Proposition~\ref{propAreaP}}
\newtheorem{lemma}{Lemma}[section]
\newtheorem{theorem}[lemma]{Theorem}
\newtheorem{proposition}[lemma]{Proposition}
\newtheorem{corollary}[lemma]{Corollary}
\theoremstyle{definition}
\newtheorem{definition}[lemma]{Definition}
\newtheorem{remark}[lemma]{Remark}
\renewcommand{\Re}{\operatorname{Re}}
 \renewcommand{\Im}{\operatorname{Im}}
\numberwithin{equation}{section}
\newcommand{\C}{\ensuremath{\mathbb{C}}}
\newcommand{\Q}{\ensuremath{\mathbb{Q}}}
\newcommand{\R}{\ensuremath{\mathbb{R}}}
\newcommand{\Z}{\ensuremath{\mathbb{Z}}}
\newcommand{\Zp}{\Z_{>0}}
\newcommand{\Rp}{\R_{>0}}
\def\blue#1{{\color{blue} #1}}
\date{}
\newcommand{\datestamp}{{\small{File:\;\hbox{\tt\jobname.tex}
\; \DTMnow}}}
\newcommand\extrafootertext[1]{%
    \bgroup
    \renewcommand\thefootnote{\fnsymbol{footnote}}%
    \renewcommand\thempfootnote{\fnsymbol{mpfootnote}}%
    \footnotetext[0]{#1}%
    \egroup
  }
  \newcommand{\ie}{{\emph{i.e.}}\ }
  \newcommand{\ii}{^{-1}}
  \newcommand{\ti}{\tilde}
  \newcommand{\bg}{\bigskip}
  \newcommand{\gD}{\ensuremath{\mathcal{D}}}
  \newcommand{\cF}{\ensuremath{\mathcal{F}}}
  \newcommand{\gF}{\ensuremath{\mathscr{F}}}
  \newcommand{\gL}{\ensuremath{\mathscr{L}}}
  \newcommand{\gG}{\ensuremath{\mathscr{G}}}
\newcommand{\al}{\alpha}
\newcommand{\be}{\beta}
\newcommand{\De}{\Delta}
\newcommand{\om}{\omega}
\newcommand{\Ga}{\Gamma}
\newcommand{\la}{\lambda}
\newcommand{\La}{\Lambda}
\newcommand{\sig}{\sigma}
\newcommand{\ze}{\zeta}
\newcommand{\pa}{\partial}
\newcommand{\defeq}{\coloneqq}           
\newcommand{\col}{\colon\thinspace}     
\newcommand{\ov}{\overline}
\newcommand{\Fom}{\gF_{\!\omega}}
\newcommand{\Fomcv}{\Fom^{\hspace{-.07em}\De}}
\newcommand{\FomJcv}{\gF_{\om,J}^\De}
\newcommand{\Fomodd}{\gF_{\textnormal{odd},\om}}
\newcommand{\Fomoddcv}{\gF_{\textnormal{odd},\om}^\De}
\newcommand{\Sodd}{S_{\textnormal{odd}}}
\newcommand{\ID}{\mathop{\hbox{{\rm Id}}}\nolimits}
\newcommand{\begla}{\begin{equation}}
  \newcommand{\beglab}[1]{\begin{equation} \label{#1}}
    \newcommand{\edla}{\end{equation}}
  \newcommand{\ens}{\enspace}
  \newcommand{\cG}{\mathcal{G}}
  \newcommand{\cO}{\mathcal{O}}
  \newcommand{\gP}{\mathscr{P}}
  \newcommand{\Imp}{\quad\Rightarrow\quad}
  \newcommand{\imp}{\;\Rightarrow\;}
  \DeclarePairedDelimiter\abs{\lvert}{\rvert}
  \newcommand{\bz}{\bar z}
  \newcommand{\wrt}{{with respect to}}
  \newcommand{\ph}{\varphi}
  \newcommand{\eps}{\varepsilon}
  \newcommand{\lhs}{{left-hand side}}
\newcommand{\rhs}{{right-hand side}}
\newcommand{\dd}{{\mathrm d}}
\newcommand{\nidt}{\ \vspace{-1.4
    ex}

  \noindent --\ }
\begin{document} 

\title{Geometric normalization}

\author{Alain Chenciner\footnote{IMCCE, Observatoire de Paris and
    Universit\'e Paris Cit\'e}, David Sauzin\footnote{Capital Normal
    University, Beijing, on leave from CNRS--IMCCE, Observatoire de
    Paris}, Qiaoling Wei\footnote{Capital Normal University, Beijing}}
\maketitle

\vspace{2cm}

\begin{abstract}
For an analytic local diffeomorphism of $(\R^2,0)\equiv (\C,0)$ of the form\extrafootertext{\datestamp} 
\[
  F(z)=\lambda\bigl( z+\sum_{j+k\ge 2}F_{kl}z^j\bar z^k\bigr)
  \quad \text{with}\enspace  \lambda=e^{2\pi i\omega},
  \enspace \text{$\omega$ irrational},
\]
we introduce the notion of ``geometric normalization'', which includes
the classical formal normalizations as a special case: it is a formal
conjugacy to a formal diffeomorphism which preserves the foliation by
circles centered at $0$.  We show that geometric normalizations,
despite of non-uniqueness, correspond in a natural way to a unique
formal invariant foliation. We show, in various contexts, generic
  results of divergence for the geometric normalizations, which amount
  to the generic non-existence of any analytic invariant foliation.

\end{abstract}


\newpage

\tableofcontents

\bigskip 

\begin{center} \rule{3cm}{.1pt} \end{center}

\bigskip 


\section{Decoupling actions and angles}   \label{secdecoupl}


For $\om\in\R$, we denote by~$\Fom$ the set of all 
formal diffeomorphisms~$F$ of~$\R^2$ for which the origin is a fixed
point and the linear part $DF(0)$ is the rotation of angle $2\pi\om$.
%
%
Identifying~$\R^2$ and~$\C$, the set $\Fom$ can be written as 
\[
 \Fom=\Big\{\, F(z) = \la z+\sum_{j+k\ge 2} F_{j,k} z^j \bar z^k
\mid F_{j,k}\in \mathbb{C} \,\Big\},\quad \text{with}\enspace \la \defeq e^{2\pi i\om}.
\]
Throughout the paper we make the ``non-resonance'' assumption
\begin{equation}
  \om \notin \Q,
\quad \text{\ie \,\! $\la$ is not a root of unity.}
\end{equation}
%
%
We introduce a notion of normalization
%
that, as will be shown later, addresses only the radial dynamical behaviour of~$F$:

\begin{definition}
  Given $F\in\Fom$, we call ``formal geometric normalization'' of~$F$
  any formal tangent-to-identity diffeomorphism
  \[
    \Phi \col z \mapsto \zeta=\Phi(z)=z+\sum_{j+k\ge2}
    \phi_{j,k}z^j\bar z^k
    \qquad (\phi_{j,k}\in\C)
  \]
  that conjugates~$F$ to a formal diffeomorphism
  $\ze \mapsto G(\ze)=\Phi\circ F\circ \Phi^{-1}(\ze)$
%
%
such that
$|G(\ze)|^2$ depends only on $|\ze|^2$:
\begin{equation}   \label{eqcharacterisG}
  |G(\zeta)|^2=\Ga(|\zeta|^2)=
  |\zeta|^2 + \sum_{n\ge2}\Ga_n|\zeta|^{2n},
  \quad \text{with}\enspace \Ga_n\in\R.
\end{equation}
Such a formal diffeomorphism~$G$ is then called a ``formal geometric normal form'' for~$F$.
%
%
\end{definition}

Notice that, as particular cases of formal geometric normal
  forms, we have formal normal forms in the usual sense, \ie formal diffeomorphisms
\begin{equation}
  \zeta \mapsto
  N(\zeta)=\lambda \zeta\Bigl(1+\sum_{k\ge 1}c_k
  |\zeta|^{2k}\Bigr)
  \qquad (c_k\in\C)
\end{equation}
that  commute with the group of rotations.
Since~$\om$ is irrational, any $F\in\Fom$ admits formal normalizations, \ie
formal tangent-to-identity diffeomorphisms~$\Phi$ such that $\Phi\circ
F\circ \Phi^{-1}$ is a formal normal form,
and these are formal geometric normalizations of~$F$ as well.


Assuming~$F$ analytic, we will be interested in questions of convergence.\footnote{\label{footnCV} As
  usual in the theory of two-variable power series or two-variable
  analytic functions,
  we say that a series $\varphi(z)=\sum a_{j,k} z^j \bar z^k$ is convergent if
  there exist $\rho_1,\rho_2>0$ such that
  \begin{equation} \label{eqACV}
  \sum |a_{j,k}| \rho_1^j
  \rho_2^k < \infty.
  \end{equation}
  The domain of convergence~$\gD$ of $\varphi(z,w)=\sum
  a_{j,k} z^j w^k$ is then defined as the set of all $(z,w)\in\C^2$ such that
  there exist $\rho_1>|z]$ and $\rho_2>|w|$ such that~\eqref{eqACV}
  holds,
  and the sum of the series defines an analytic function of two variables in~$\gD$.
  By restriction to $\{w=\bar z\}$, we get a real-analytic function
  in $\{z\in\C\mid (z,\bar z)\in\gD\}$
  (complex-valued in general, real-valued if $a_{j,k}=\bar a_{k,j}$
  for all $j,k$).
  }
The question of the existence of an analytic conjugacy to a
normal form in the usual sense has been well studied for area-preserving maps,
in which case it is natural to require that the formal normalization
be area-preserving too---the formal normal form is then unique (and called Birkhoff normal form).
For example, E.~Zehnder \cite{Z} and C.~Genecand \cite{C} proved that
a generic local analytic area-preserving map is not integrable
and hence admits no convergent area-preserving normalization.
In the parallel realm of Hamiltonian flows,
C.~L.~Siegel \cite{Si} proved that a generic analytic Hamiltonian has
no convergent normalization.
R.~P\'erez-Marco \cite{PM2} showed that, for a fixed
quadratic part, either any Hamiltonian has a convergent normal form,
or a generic Hamiltonian has a divergent
Birkhoff normal form. 
%
Recently, R. Krikorian \cite{K} showed which alternative holds: for a fixed quadratic part, a generic
Hamiltonian has a divergent Birkhoff normal form.

For general analytic $F\in\Fom$, not necessarily area-preserving, normal forms and
{\it a fortiori} geometric normal forms are not unique, with the only
exception of the case $F(\ze)=\la\ze$ (this follows from \cite[Lemma~4]{CSSW}).
This complicates the study of the convergence of
normalizations and normal forms, as well as that of
geometric normalizations and geometric normal forms.

One can see that a formal geometric
  normal form, resp.\ a formal normal form, is any formal diffeomorphism of the form
  %
  %
\begin{equation}   \label{eqpolarG}
    G(\ze) = \lambda \ze(1+f(|\ze|^2)e^{2\pi i \beta(\ze)}
  \end{equation}
resp.
\begin{equation}   \label{eqpolarN}
  N(\zeta)=\lambda \zeta(1+f(|\zeta|^2))e^{2\pi i n(|\zeta|^2)}
  \end{equation}
where $f(R)$ and~$n(R)$ are real formal series in one indeterminate
and the formal series $\beta(\zeta) = \sum_{j+k\ge1} \beta_{j,k}\zeta^j\bar \zeta^k$ is real
in the sense that $\beta_{j,k}=\ov{\beta_{k,j}}$
(thinking of polar coordinates, these facts are obvious when~$G$
  or~$N$ are convergent; they can be proved in all generality by applying
  Lemma~\ref{lemformG} below to $\la\ii G$ and $\la\ii N$).
Thus, the convergence problem for the normalizations of a given analytic
  $F\in\Fom$ naturally splits into two independent problems, a radial
  one and an angular one:

\begin{enumerate}[(i)]
\item 
  (radial problem)  existence of a convergent 
  geometric normalization of~$F$, \ie
  an analytic conjugacy to a local diffeomorphism~$G$ that preserves the
  foliation by circles centered at 0;
\item 
  (angular problem) existence of an analytic conjugacy to a normal
  form~$N$ once the existence of an analytic geometric
  normalization is taken for granted.
 \end{enumerate}

 
The article \cite{CSSW} addressed problem~(ii): it considered
  those $F\in\Fom$ that are of the form~\eqref{eqpolarG}
  with~$f$ and~$\beta$ convergent, \ie analytic geometric normal
  forms,
  and showed the divergence of all formal normalizations
  %
  %
  for generic~$f$ and~$\beta$ when~$\omega$ is fixed irrational,
  as well as for generic~$\beta$ when~$f$ is fixed and~$\omega$ is fixed non-Bruno. 
%
 
In this paper, we address problem~(i) and give generic results of
divergence for all formal geometric normalizations of an
analytic $F\in\Fom$.
We will also see that the formal geometric normalizations define in a
natural way a unique \emph{invariant formal foliation}.
The idea is that, when we have a convergent geometric
normalization~$\Phi$,
equation~\eqref{eqcharacterisG} 
says that the foliation by circles centered
at~$0$ is invariant by the geometric normal form~$G$:
\begin{center}
  $G$ maps the circle $\big\{ |\ze|^2 = R \big\}$
  to the circle $\big\{ |\ze|^2 = \Ga(R) \big\}$
  \end{center}
  ($\Ga$ is automatically convergent in that case),
thus the levels of $L\defeq |\Phi|^2$ define a
singular foliation 
(singular because of the exceptional level $\{0\}$) that is invariant
by~$F$:
\begin{equation}   \label{eqLGa}
\text{$F$ maps the level $\{ L(z) = R \}$ to the level $\{ L(z) = \Ga(R)
  \}$.}
  \end{equation}
  When $\Phi$ is not assumed to be convergent,
$L=|\Phi|^2$ belongs to the affine space
  \begin{equation}   \label{eqdefgL}
    \gL \defeq \Big\{\, L(z) = z\bar z + \sum_{r+s\ge3} L_{rs} z^r
    \bar z^s \mid
    L_{rs} \in\C \;\text{with}\; L_{rs}=\ov{L_{sr}}\;\text{for
      every $(r,s)$} \,\Big\}
    \end{equation}
  and $\Ga$ belongs to the group~$\gG$ of formal tangent-to-identity 
 and $\mathbb{R}$-preserving diffeomorphisms, 
  \begin{equation}  \label{eqdefgG}
    \gG \defeq \Big\{\, \Ga(R) = R + \sum_{n\ge2}\Ga_n R^n \mid
    \Ga_n \in\R \,\Big\},
    \end{equation}
  but in that case too the
  ``formal singular foliation defined by~$L$'' 
will be given a natural interpretation
as the class of $L$ in the quotient  
$\gG\backslash \gL$ of $\gL$ by the left action of $\gG$.
  In general, $\Phi$, $L$ and~$\Ga$ are not unique, however
  we will see that
  the invariant formal singular foliation defined by the levels of~$L$ is unique
  in the sense that changing~$L$ simply amounts to relabelling the
  levels.
  We will also see that the conjugacy class of~$\Ga$ in~$\gG$
is uniquely determined, which motivates the following definition:
\begin{definition}   \label{defformcons}
  We call ``formally conservative'' an $F\in\Fom$ for which $\Ga=\ID$.
\end{definition}


Our main result is that, generically, one cannot find any convergent
geometric normalization.
%
%
In order to speak about generic properties, for any $\De>0$ we consider
\begin{equation}
\Fomcv \defeq \{\, \text{$F\in\Fom$ analytic with a complex
    extension~$\hat F$ holomorphic in~$B_\De$} \,\} 
\end{equation}
where $B_\De\defeq \{\, (z,w)\in\C^2 \mid |z|<\De, \; |w|<\De \,\}$
  %
(with reference to~\eqref{notahatF} for the notation~$\hat F$);
  we endow this space with the compact-open topology
  and call generic a property that is true on a dense
  $G_{\delta}$ subset of~$\Fomcv$.
%
%
The main theorem can be stated as follows:

\goodbreak

\begin{ourthm}  \label{main}
  Let $\De>0$. \medskip
  
  \noindent (i) For generic $F\in\Fomcv$, all formal geometric
  normalizations of $F$ are divergent.
  %
  \medskip 

\noindent (i') The same conclusion holds for generic
$F\in\Fomcv\cap\{\Gamma\neq\ID\}$.

  \medskip 

\noindent (ii) Given an integer $N\ge 2$ and a finite jet
$J(z)=\lambda z+\sum_{2\le j+k\le N}J_{jk}z^j\bar z^k$,
the same conclusion holds for generic $F\in\FomJcv$ with
\begin{equation}
\FomJcv \defeq \{F\in\Fomcv \mid F=J+O(|z|^{N+1})\}
\end{equation}
under the assumption that $\omega$ satisfies
%
a super-Liouville type condition, namely~\eqref{eqdefsuperL} below.
\smallskip 

\noindent (iii) Given 
$J(z)=\lambda z+\sum_{2\le k\le N} J_{k} z^k$
(a \emph{holomorphic} $N$-jet), there exists $\De_0>0$ such that
the same conclusion holds for generic $F\in\FomJcv$ if $\De\le\De_0$ and
under the weaker classical assumption that~$\omega$ is a non-Bruno number,
namely~\eqref{eqdefnonB} below.
\medskip 

  \noindent (iv) Given a polynomial $J(z)=\lambda
  z+\sum_{2\le j+k\le N}J_{jk}z^j\bar z^k$ that is the $N$-jet of an
  area-preserving map,
  %
  the same conclusion holds for generic $F\in\FomJcv$
  without any other assumption than $\om\in\R-\Q$.
\end{ourthm}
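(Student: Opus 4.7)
The plan is a Baire category argument. Let $A\subset\Fomcv$ denote the set of $F$ admitting at least one convergent formal geometric normalization; we must show $A^c$ is a dense $G_\delta$ in $\Fomcv$ for case~(i), in $\Fomcv\cap\{\Ga\neq\ID\}$ for case~(i'), and in $\FomJcv$ for cases~(ii)--(iv). First I would set up the $F_\sigma$ structure: writing $A=\bigcup_{n,m\in\N}A_{n,m}$, where $A_{n,m}$ collects those $F$ admitting a tangent-to-identity $\Phi$ that is holomorphic on $B_{1/n}$, bounded by $m$ there, and conjugates $F$ to a circle-preserving $G$, one checks that $A_{n,m}$ is closed in the compact-open topology: any compact-open limit of such $\Phi$'s is again holomorphic and tangent to the identity (Montel), and the condition that $|G(\ze)|^2$ depend only on $|\ze|^2$ passes to the limit. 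Thus $A^c$ is $G_\delta$, and the whole issue reduces to showing that each $A_{n,m}$ has empty interior in the relevant ambient space.

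The dynamical idea behind non-density uses the translation between geometric normalizations and invariant foliations described in the introduction: a convergent $\Phi\in A_{n,m}$ produces a convergent $F$-invariant analytic foliation by the level curves of $L=|\Phi|^2$, on which $F$ acts by the one-dimensional map $\Ga$ via~\eqref{eqLGa}. This forces rigidity---every periodic orbit of $F$ in $B_{1/n}$ lies on a single leaf, all points of one leaf share the same period, and the induced map on each leaf is conjugate to a rigid rotation. To destroy membership in $A_{n,m}$ it is then enough to perturb $F$ so as to create one dynamical feature incompatible with this picture: either a pair of nearby periodic orbits of the same period but lying at measurably distinct ``radii'', or a hyperbolic periodic point whose local invariant manifolds cannot possibly coincide with analytic leaves. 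Such perturbations can be taken of arbitrarily high order, which preserves the prescribed $N$-jet $J$ in cases~(ii)--(iv) automatically.

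The arithmetic of $\om$ enters through the need to localize the perturbation inside $B_{1/n}$. In case~(ii), the super-Liouville assumption~\eqref{eqdefsuperL} provides sequences of extremely good rational approximants $p_k/q_k$ of $\om$; attaching to each one a perturbation supported in a thin annulus at the radius where the $q_k$-th iterate of the unperturbed $F$ is close to the identity yields a $q_k$-periodic orbit by the implicit function theorem, with controlled location. In case~(iii), the non-Bruno condition~\eqref{eqdefnonB} together with Yoccoz's theorem forbids convergent linearization at~$0$; choosing $\De_0$ small enough that the Bruno-type obstruction is already visible on $B_{\De_0}$ makes any would-be analytic invariant foliation incompatible with a generic holomorphic tail of $F$. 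In case~(iv), the preserved area form unlocks the Poincar\'e--Birkhoff mechanism: every rational approximant $p/q$ of $\om$ yields, after a generic area-preserving perturbation of the tail, a hyperbolic--elliptic pair at scale roughly $q^{-1/2}$, and the hyperbolic orbit obstructs any analytic invariant foliation. The separation of case~(i')  from~(i) comes from observing that the perturbation in~(i) can be chosen to push $\Ga$ off the identity, so generic $F\in\Fomcv$ lies in $\{\Ga\neq\ID\}$ as well.

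The main obstacle I expect is the simultaneous preservation of the ambient constraint while driving the perturbation into the complement of every $A_{n,m}$ with quantitative control. Case~(iv) is the most delicate: the perturbation must itself be area-preserving, built for instance from a localized Hamiltonian supported in a small annulus, and a transversality argument is required to show that parameter values producing a genuinely hyperbolic periodic orbit form an open dense set; one must further verify that the resulting orbit actually lies in $B_{1/n}$. Case~(iii) adds the subtlety of calibrating $\De_0$ against Yoccoz's Bruno-function estimates so that the non-linearization obstruction is captured on the ball where convergence of $\Phi$ is claimed. Once all these local density statements are in place, the diagonal Baire argument producing a single $F$ that simultaneously avoids all $A_{n,m}$ is standard.
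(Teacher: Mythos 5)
Your proposal takes a genuinely different route from the paper, and it has a gap at its core: the density step. The paper never attempts to perturb an arbitrary $F$ out of the set of maps admitting a convergent geometric normalization. Instead it (a) singles out a canonical \emph{balanced admissible series} $L_F$ whose divergence implies the divergence of \emph{all} $F$-admissible series and hence of all geometric normalizations (Proposition~\ref{LFbasic} --- this step is what neutralizes the non-uniqueness of normalizations), (b) proves that the coefficients of $L_F$ are polynomial in those of $F$ with degree growing linearly in the order (Proposition~\ref{degLF}), and (c) invokes the Ilyashenko--P\'erez-Marco alternative (Theorem~\ref{thmIPM}, resting on the Bernstein--Walsh inequality from potential theory): along any affine line in $\Fomcv$ or $\FomJcv$, either $L_F$ converges for every parameter or it diverges off a Lebesgue-null set. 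This reduces each case of the theorem to exhibiting \emph{one} example with divergent $L_F$. Your plan instead requires, for every $F$ in the closed set $A_{n,m}$ and every neighborhood of it, a perturbation destroying all convergent normalizations --- and the mechanisms you propose do not deliver this in the general dissipative setting of cases (i) and (ii): near a dissipative elliptic fixed point there is no Poincar\'e--Birkhoff mechanism, no guarantee that small perturbations create any periodic orbits at all, and a small perturbation of a map possessing an analytic invariant foliation may perfectly well retain one (there is no known local rigidity statement to exploit). Moreover, a hyperbolic periodic point does not by itself contradict an invariant foliation when $\Ga\neq\ID$, since the leaves are permuted rather than preserved; the homoclinic obstruction you invoke is the Zehnder--Genecand argument, which is specific to the area-preserving (hence formally conservative, $\Ga=\ID$) case where an admissible $L$ would be a genuine first integral.

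The role of the arithmetic hypotheses is also misidentified. In case (ii) the super-Liouville condition~\eqref{eqdefsuperL} is not used to localize periodic orbits via approximants; it feeds a Siegel-type coefficient construction: one chooses $F_{1,n_p}=F_{n_p+1,0}=\pm1$ so that, via the explicit formula~\eqref{eqdominantF}, the small divisor $1-\la^{n_p}$ forces $|L_{n_p+1,1}|\ge 2\,n_p!\,|\cos(2\pi\om)|$, which makes the balanced series of that single example divergent. In case (iii), $\De_0$ is simply the radius of convergence of $\Phi\ii\circ F^*\circ\Phi$ where $F^*(z)=\la z+\la z^{N+1}$ is non-linearizable by Corollary~\ref{CorGe}; there is no calibration against Bruno-function estimates. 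In cases (i) and (iv) the single example is a non-integrable area-preserving map supplied by Corollary~\ref{CorGenecand}. If you want to salvage a Baire-type argument without the potential-theoretic alternative, you would need to supply the missing local perturbation lemmas for general dissipative maps, which is precisely the difficulty the IPM alternative was designed to bypass.
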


Notice that all area-preserving maps are formally
conservative in the sense of Definition~\ref{defformcons},
and so are holomorphic maps---see Proposition~\ref{integ} below.
%

\medskip

There are similar results when our dynamical system belongs to
\begin{equation}
  \Fomoddcv \defeq \{F\in \Fomcv \mid F(-z)=-F(z)\},
\end{equation}
\ie when the dynamics is symmetric with respect to the origin
of~$\R^2$, a feature that is often encountered in practice, however
that feature will affect the proof.

\begin{ourthm}   \label{thmB}
  The statements (i), (i'), (ii), (iii) and~(iv) of Theorem~\ref{main}
  remain valid if one replaces~$\Fomcv$ with $\Fomoddcv$,
  if one takes~$J$ odd in the analogue of~(ii)--(iv),
  and if, in the analogue of~(ii), one replaces the super-Liouville
  condition~\eqref{eqdefsuperL} for~$\om$ by the same condition
  on~$2\om$.
  %
\end{ourthm}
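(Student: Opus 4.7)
The plan is to rerun the proof of Theorem~\ref{main} inside the closed subspace $\Fomoddcv\subset\Fomcv$, tracking at each step the effect of the parity constraint $F(-z)=-F(z)$. A map $F\in\Fom$ is odd iff $F_{j,k}=0$ whenever $j+k$ is even, so $\Fomoddcv$ is a closed affine subspace of $\Fomcv$ in the compact-open topology, hence itself a Baire space; any dense $G_\delta$ subset of $\Fomcv$ produced in the proof of Theorem~\ref{main} by modifying Taylor coefficients of~$F$ will then have an odd counterpart obtained by restricting the modifications to odd monomials.

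I would then check that for such an odd~$F$ the formal geometric normalization~$\Phi$ may itself be taken odd, the corresponding geometric normal form $G=\Phi\circ F\circ \Phi^{-1}$ then being odd and the radial factor $\Ga\in\gG$ unchanged. This is because the recursive cohomological equations determining $\phi_{j,k}$ couple only coefficients $\phi_{j',k'}$ of the same parity $j'+k'\equiv j+k\pmod 2$, so the odd component of any normalization is already a normalization. The key arithmetic point is that the denominators of these equations, $e^{2\pi i(j-k-1)\om}-1$, now occur only for $j+k$ odd, hence for $j-k-1$ even: writing $j-k-1=2m$, they read $e^{4\pi i m\om}-1$, i.e.\ the small divisors attached to~$2\om$. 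This is precisely why Theorem~\ref{thmB} prescribes the super-Liouville hypothesis of~(ii) on~$2\om$ rather than~$\om$; in~(iii) the non-Bruno property is stable under doubling, and in~(iv) no arithmetic hypothesis is invoked at all, so no change is needed there.

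With these reductions in place, each of (i), (i'), (ii), (iii), (iv) transfers by the same strategy as in Theorem~\ref{main}: for an odd~$F$ one identifies a divergence criterion on the growth of coefficients $\phi_{j,k}$ of any purported convergent geometric normalization, then exhibits a dense $G_\delta$ subset of the relevant ambient space ($\Fomoddcv$, or $\Fomoddcv\cap\{\Ga\neq\ID\}$, or the odd jet class $\Fomoddcv\cap\FomJcv$) on which the criterion forces divergence. All the perturbations used in Theorem~\ref{main} to build the ``bad'' set act on coefficients $F_{j,k}$ with $j+k$ arbitrarily large, a freedom that persists under the extra constraint $j+k$ odd.

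The main obstacle is to verify that the quantitative estimates driving divergence in Theorem~\ref{main}---in particular the lower bounds on $|\phi_{j,k}|$ extracted from resonant modes---retain their strength when only odd monomials are admissible and when, in case~(ii), the relevant small divisors are reindexed by~$2m$ rather than~$n$. Parts~(i), (i'), (iv) use no diophantine sharpness and transfer for free; part~(iii) relies only on the non-Bruno character of~$\la$, inherited by~$\la^2$; the only substantive verification is that the super-Liouville construction of~(ii), re-read with~$2\om$ in place of~$\om$, still produces, inside the odd jet class, an analytic~$F$ whose formal geometric normalizations all diverge.
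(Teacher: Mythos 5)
Your overall strategy is the right one and matches the paper's: apply the Ilyashenko--P\'erez-Marco alternative to the closed affine subspace $\Fomoddcv$ and exhibit one divergent example in each class, and your explanation of why the super-Liouville condition migrates to $2\om$ in case~(ii) is essentially correct (for odd~$F$ the admissible series~$L$ is even, so the only divisors $1-\la^{r-s}$ that can be exploited have $r-s$ even, and the paper's construction indeed places the perturbation at $(1,2n_p)$ and $(2n_p+1,0)$ so as to hit the divisor $1-\la^{2n_p}$). Case~(iii) also goes through much as you say, with the model map taken to be $\la z+\la z^{2N+1}$ and the conjugating polynomial odd.

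The genuine gap is your claim that cases (i), (i') and~(iv) ``transfer for free.'' They do not. The divergent examples for (i) and~(iv) come from a non-integrable area-preserving map with prescribed jet (Corollary~\ref{CorGenecand}), which rests on Genecand's genericity theorem in $S(F_0,N)$; to run the argument in $\Fomoddcv$ one needs such a map that is \emph{also odd}, and genericity in $S(F_0,N)$ says nothing about the proper closed subspace $\Sodd(F_0,N)$ (nor does ``restricting the modifications to odd monomials'' of a dense $G_\delta$ produce a dense $G_\delta$ of the subspace). The paper must reprove Genecand's theorem in the odd class (Theorem~\ref{area2}): the generating-function perturbations have to be taken even, the minimal $p/q$ periodic orbits come in symmetric pairs $(Q)$ and $(-Q)$ (one chooses $q$ odd to guarantee these are distinct orbits), transversal \emph{homoclinic} orbits are replaced by transversal \emph{heteroclinic} orbits connecting $(Q)$ to $(-Q)$, and the Stone--Weierstrass step must be symmetrized; an odd variant of the polynomial completion of area-preserving jets (the addendum to Proposition~\ref{propAreaP}) is also needed. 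None of this appears in your proposal. A secondary issue: your divergence criterion is phrased as lower bounds on the coefficients $\phi_{j,k}$ of ``any purported convergent geometric normalization,'' but these coefficients are not well-defined functions of~$F$ (normalizations differ by arbitrary factors $e^{2\pi i\beta}$); you need the selection device of Section~\ref{secselect} --- the balanced admissible series $L_F$ and Proposition~\ref{LFbasic} --- to reduce the divergence of \emph{all} normalizations to that of a single canonical object to which the IPM alternative can be applied.
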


In fact, in all the cases covered by Theorems~\ref{main}
and~\ref{thmB}, not only is any geometric normalization~$\Phi$
divergent, but also $L=|\Phi|^2$ is, as will be seen in
Section~\ref{secpfthmmain}. 
%
We can thus say that \emph{the invariant formal foliation of~$F$ is
generically divergent.}

The aforementioned super-Liouville type condition on~$\omega$ is
\begin{equation}   \label{eqdefsuperL}
\text{there exist infinitely many }\; k\in\Zp \;\text{such that}\;
  \bigl|\lambda^k-1\bigr|^{-1}\ge k!.
\end{equation}
We need this condition in Point~(ii) to mimic Siegel-Moser's
construction of an example of a non linearizable homomorphic map $F(z)=\lambda z+O(|z|^2)$.
Condition~\eqref{eqdefsuperL} follows from the arithmetic
condition~(11) in Chapter III, \S 25 of \cite{SM} (in which $k!^2$ is used instead of~$k!$,
actually any positive power of~$k!$ would do).
It is proved in \cite[pp.~189--190]{SM} that \emph{the set of such $\om$'s is
dense in~$\R$.} 
%

The non-Bruno condition used in Point~(iii) is
\begin{equation}   \label{eqdefnonB}
\sum_{k\ge0} \frac{\ln q_{k+1}}{q_k} = +\infty,
  \end{equation}
where we denote by $(p_k/q_k)_{k\ge0}$ the sequence of the
convergents of the continued fraction expansion of~$\om$.
It is well-known that \emph{non-Bruno numbers form a dense subset of~$\R$.}

Actually, the non-Bruno set contains the super-Liouville set.
Indeed, we have $|\la^q-1| \ge 4 \operatorname{dist}(q\om,\Z)$ for all
$q\in\Z$ (\cite[p.~189]{SM}), hence~\eqref{eqdefsuperL} implies that
for infinitely many $q\in\Zp$ one can find $p\in\Z$ such that
$|q\om-p|\le \frac{1}{4q!}$.
For any such large enough~$q$, take $k\ge1$ such that $q_k\le q <
q_{k+1}$;
the law of the best approximation (\cite[Theorem~182 and its
proof]{HW}) yields $|q\om-p| \ge |q_k\om-p_k|$.
Since we always have $2q_{k+1}>|q_k\om-p_k|^{-1}$, we get
$q_{k+1} > \frac12 |q\om-p|\ii \ge 2 q! \ge 2 q_k!$ for infinitely
many $k$'s,
whence~\eqref{eqdefnonB}.

\subsubsection*{Strategy of proof and plan of the article.}

\begin{enumerate}[--]
\item
In Section~\ref{secformfol} we describe, for a given $F\in\Fom$, the
set of all formal geometric normalizations~$\Phi$ and show how to
reduce the problem to the question of finding a pair $(L,\Ga)$ as
alluded to in \eqref{eqLGa}--\eqref{eqdefgG}.
We also formalize the notion of invariant formal foliation.
\item
  In Section~\ref{secselect}, we use the unique invariant formal
  foliation to define a coordinate-dependent formal
  involution~$\tau_F$ and then to single out a ``balanced''
  formal series~$L_F$,
  whose divergence implies the divergence of all geometric
  normalizations.
\item
In Section~\ref{secgendiv}, we prove the generic divergence of the
balanced formal series~$L_F$ 
by exploiting the so-called
Ilyashenko--P\'erez-Marco alternative
and thus prove Theorem~\ref{main}.
In fact, we even prove the generic divergence of the invariant formal 
foliation itself (\ie of any~$L$).
We can also prove the generic divergence of the formal
involution~$\tau_F$, with an extra arithmetic condition in the case of
a prescribed $N$-jet~$J$: we then require~$\omega$ to fulfill an ``odd
super-Liouville'' assumption---see Theorem~\ref{thmtaud}.
\item
  In Section~\ref{secodddiv}, we consider the case of dynamical
  systems $F\in\Fomoddcv$.
Such symmetric system necessarily have $L(-z)=L(z)$ and $\tau_F(z)=-z$, and some
modifications are thus needed in the proofs.
\item
Section~\ref{secquest} raises questions for future research.
\item
The paper ends with four technical appendices and a list of minor
errata to the article \cite{CSSW}, to which the present paper is a
kind of prequel.
\end{enumerate}


\section{Formal geometric normalizations and formal invariant
  foliations}
\label{secformfol}


\subsection{Reduction to $F$-admissible pairs}


It is convenient to introduce a notation for the 
square modulus function: 
%
%
\begin{equation}    \label{eqdefnu}
\nu(z)\defeq z \bar z. 
\end{equation}
%
%
Let us start with the easy
(obvious in case of convergence) 
\begin{lemma}   \label{lemformG}
  Suppose that
  $\Psi(z) = z+\sum_{j+k\ge2}
  \psi_{j,k}z^j\bar z^k$
  is a formal tangent-to-identity diffeomorphism such that
  \begin{equation}   \label{eqnuPsiGanu}
    \nu\circ \Psi = \Ga \circ \nu
  \end{equation}
  for some univariate formal series $\Ga(R) \in \C[[R]]$.
  Then necessarily~$\Ga$ belongs to the group~$\gG$ defined by
  formula~\eqref{eqdefgG} and~$\Psi$ is of the form
  \begin{equation}   \label{eqformPsi}
    \Psi(z) = z \big( 1 + f(z \bar z) \big) e^{2\pi i\be(z)}
  \end{equation}
  with a univariate real formal series without constant term,
  $f(R)\in R\,\R[[R]]$, determined by
  \begin{equation}   \label{eqformGa}
    \Ga(R) = R \big( 1 + f(R) \big)^{\!2}
  \end{equation}
  and a uniquely determined formal series
  $\beta(z) = \sum_{j+k\ge1} \beta_{j,k}z^j\bar z^k\in\C[[z,\bar z]]$
  that is real in the sense that $\beta_{j,k}=\ov{\beta_{k,j}}$ for
  all $(j,k)$, \ie $\be = \ti\be$ if we use
  notation~\eqref{notatifzw}.  
  \smallskip
  
  Moreover, if $\Psi(z)$ is convergent, then $f$ and~$\be$ are convergent.
  \end{lemma}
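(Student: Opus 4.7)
My plan is to work in the bivariate formal ring $\C[[z,w]]$ (treating $z$ and $\bar z$ as independent), factor $\Ga$ as $R(1+f)^2$ via a binomial square root, and then read off $\be$ as a formal logarithm of the ``angular factor'' $\Psi/\bigl(z(1+f(zw))\bigr)$.

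First I recast the hypothesis in the two-variable formalism. Write $\Psi(z,w)$ for the bivariate extension of $\Psi$ and $\ti\Psi(z,w) \defeq w + \sum \ov{\psi_{j,k}} w^j z^k$ for its bar-swap, so that $\nu\circ\Psi = \Ga\circ\nu$ becomes the identity
\[
\Psi(z,w)\,\ti\Psi(z,w) = \Ga(zw) \quad \text{in } \C[[z,w]].
\]
Matching lowest-order terms gives $\Ga(R) = R + O(R^2)$. The bar-swap is an $\R$-algebra involution fixing $zw$, and $\ti{\Psi\ti\Psi} = \ti\Psi\,\Psi = \Psi\ti\Psi$; applying it to the displayed identity forces $\sum \ov{\Ga_n} R^n = \sum \Ga_n R^n$, so $\Ga \in \gG$. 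Setting $z = 0$ in the identity gives $\Psi(0,w)\,\ti\Psi(0,w) = 0$ in the integral domain $\C[[w]]$; since $\ti\Psi(0,w) = w + O(w^2)$ is nonzero, we must have $\Psi(0,w) = 0$, so $\Psi$ is divisible by $z$ in $\C[[z,w]]$.

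Next I construct $f$ and $\be$. As $\Ga(R)/R \in 1 + R\,\R[[R]]$, the binomial series delivers a unique $f \in R\,\R[[R]]$ with $(1+f(R))^2 = \Ga(R)/R$, which is~\eqref{eqformGa}. By the divisibility just proved, the quotient $U(z,w) \defeq \Psi(z,w)/\bigl(z(1+f(zw))\bigr)$ is well defined in $\C[[z,w]]$ with $U(0,0) = 1$, and a direct computation gives $U\,\ti U = \Psi\ti\Psi / \bigl(zw(1+f(zw))^2\bigr) = 1$. The formal logarithm is well defined on series starting with $1$, so setting $\be \defeq (2\pi\I)^{-1} \log U$ produces a series without constant term with $U = e^{2\pi\I\be}$ and $\ti U = e^{-2\pi\I\ti\be}$. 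The relation $U\,\ti U = 1$ thus becomes $e^{2\pi\I(\be - \ti\be)} = 1$; injectivity of the formal exponential on series without constant term forces $\be = \ti\be$, i.e.\ $\be_{j,k} = \ov{\be_{k,j}}$. Specializing $w = \bar z$ and recombining yields~\eqref{eqformPsi}. Uniqueness of $f$ (and hence of $U$ and of $\be$) follows from uniqueness of the binomial square root in $1 + R\,\R[[R]]$.

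For convergence, if $\Psi$ is convergent then so is the bivariate product $\Psi\ti\Psi$, hence $\Ga$ is a convergent univariate series and its binomial square root gives $f$ convergent near $0$. The quotient $U(z,\bar z) = \Psi(z,\bar z)/(z(1+f(z\bar z)))$ is then analytic on a punctured neighborhood of $0$, and the divisibility of $\Psi$ by $z$ established above removes the apparent singularity, yielding $U$ analytic with $U(0) = 1$; its analytic logarithm then exhibits $\be$ as convergent. The only delicate point I anticipate is the divisibility argument of step~1, which legitimates the definition of $U$; everything downstream is routine formal-series manipulation.
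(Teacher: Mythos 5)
Your proof is correct and follows essentially the same route as the paper's: both establish that $\Psi$ is divisible by $z$ (you by setting $z=0$ in the bivariate identity and using that $\C[[w]]$ is an integral domain, the paper by decomposing $\Psi=\psi_0(\bar z)+z\,\psi_*$), then extract $f$ by a binomial square root of $\Ga(R)/R$, define $\be$ as a formal logarithm of the unit factor, and deduce $\be=\ti\be$ from $e^{2\pi\I(\be-\ti\be)}=1$, with convergence propagating through the same chain. The only cosmetic difference is that you work explicitly in $\C[[z,w]]$ throughout and spell out the reality of $\Ga$ via the bar-swap involution, which the paper leaves as a one-line remark.
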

Note that, in general, the left-hand side of~\eqref{eqnuPsiGanu} is a formal series in
$(z,\bar z)$.

\begin{proof}
  Expanding in powers of~$z$, we can write
\[
  \Psi(z) = \psi_0(\bar z) + z\, \psi_*(z,\bar z), \qquad
  \ov{\Psi(z)} = \chi_0(\bar z) + z\, \chi_*(z,\bar z)
\]
with $\psi_0(\bar z), \chi_0(\bar z)\in \C[[\bar z]]$ and
$\psi_*(z,\bar z), \chi_*(z,\bar z)\in \C[[z,\bar z]]$.
On the other hand, $\Ga(R) = \Ga_0 + R\,\Ga_*(R)$ with $\Ga_0\in\C$
and $\Ga_*(R) \in \C[[R]]$.
From~\eqref{eqnuPsiGanu} we get
\[
  \Psi\ov\Psi=\Ga_0 + z\bar z\,\Ga_*(z\bar z).
\]
In particular $\psi_0(\bar z)\chi_0(\bar z)=\Ga_0$.
The absence of constant term in~$\psi_0(\bar z)$ implies that
$\Ga_0=0$ and $\psi_0(\bar z)\chi_0(\bar z)=0$;
but the linear part of~$\ov{\Psi(z)}$ is~$\bar z$, hence $\chi_0(\bar
z)=\bar z+O(\bar z^2)$ is nonzero and we must have $\psi_0(\bar z)=0$.

Thus $\Psi(z) =z\, \psi_*(z,\bar z)$ with $\psi_*(z,\bar z)=1+O(z,\bar
z)$ satisfying
\[
  \psi_*\ov{\psi_*}=\Ga_*\circ\nu.
\]
Clearly, $\Ga_*(R) = 1+O(R)\in \R[[R]]$.
Substituting $\Ga_*-1$ in the formal series $\sqrt{1+T} = 1 + \frac12 T
+ \cdots$,
we get a real formal series $\sqrt{1+(\Ga_*-1)} = 1+f(R)$ such that
$\psi_*\ov{\psi_*}=( 1 + f\circ\nu)^2$,
and substituting $\frac{\psi_*}{1+f\circ\nu}-1$ in the formal series
$\log(1+T) = T - \frac12 T^2 + \cdots$,
we get a formal series
\[
  \be(z,\bar z) \defeq \frac{1}{2\pi i}\log\!\Big(1 +
  \Big(\frac{\psi_*}{1+f\circ\nu}-1\Big)\Big) \in\C[[z,\bar z]]
\]
without constant term
such that $\psi_* = (1+f\circ\nu) e^{2\pi i \be}$.
We have thus obtained~\eqref{eqformPsi}--\eqref{eqformGa}.
Using notations \eqref{notatifzw}--\eqref{eqnormfsq} we see that the equation
$1 = \frac{\psi_* \ti\psi_*}{(1+f\circ\nu)^2} = e^{2\pi i(\be-\ti \be)}$
implies $\be=\ti \be$.

If~$\Psi(z)$ is convergent, then so is $\psi_*$, and thus also $\Ga_*$,
$f$ and~$\be$.
\end{proof}

The condition~\eqref{eqcharacterisG} that characterises formal geometric
normal forms~$G$ can be rewritten $\nu\circ G=\Ga\circ \nu$;
when applied to~$\la\ii G$, Lemma~\ref{lemformG} thus justifies the claim made earlier that any
geometric normal form is of the form~\eqref{eqpolarG}.

Now, we see that, given $F\in\Fom$,
\begin{multline*} 
  \text{$\Phi$ is a formal geometric normalization}
  \;\Longleftrightarrow\; 
  \exists \Ga\in\gG \enspace \text{such that}\;
  \nu\circ\Phi\circ F = \Ga\circ \nu \circ \Phi \\[1ex]
  \;\Longleftrightarrow\; 
  \exists (L,\Ga)\in\gL\times\gG \enspace \text{such that}\;
  \left\{\begin{aligned}
      & L = \nu\circ \Phi \\[1ex]
      & L\circ F = \Ga \circ L
    \end{aligned}\right.
\end{multline*}
%
%
    (recall that~$\gL$ and~$\gG$ were defined in \eqref{eqdefgL}--\eqref{eqdefgG}).
    %
%

\begin{figure}[h]
  \centering
  \includegraphics[scale=0.65]{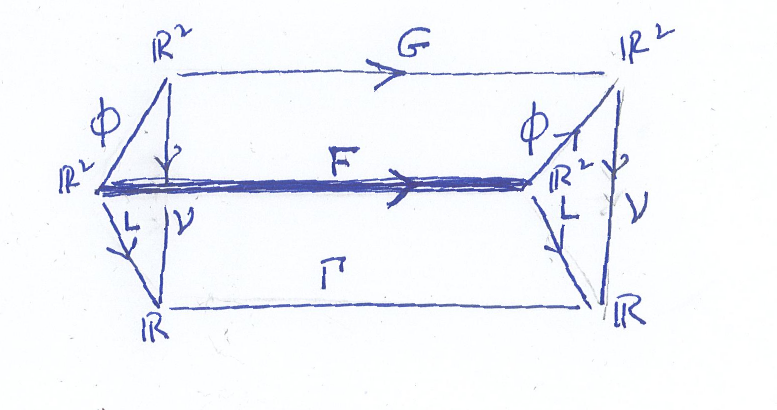}
  \vspace{-3ex}
  
  Commutative Diagram~1: Geometric normal form
  
  \end{figure}

\begin{definition}\label{F-ad}
    We call ``$F$-admissible pair''  any $(L,\Ga)\in\gL\times\gG$
    such that
    \begin{equation}    \label{conj}
      L\circ F=\Ga\circ L
    \end{equation}
 (see Commutative Diagram~1).
    We say that~ $L$ is ``$F$-admissible'', or~$L$ is admissible for~$F$,
    if there exists~$\Ga$ such that $(\Ga, L)$ is an $F$-admissible pair.
  \end{definition}

%
    The following two lemmas imply that the existence of a geometric
    normalization of~$F$ is equivalent to the existence of an
    $F$-admissible pair $(\Gamma,L)$.
    
\begin{lemma}\label{Phi}
  Given $L\in\gL$,
\medskip
  
  \noindent (i) there always exist formal tangent-to-identity
  diffeomorphisms~$\Phi$ such that $L=\nu\circ\Phi$;
\medskip
  
  \noindent (ii) two such formal diffeomorphisms~$\Phi$
  differ from one another by the multiplication by
  $e^{2\pi i\be(z)}$, where
$\be(z)=\sum_{j+k\ge 1}\be_{j,k}z^j\bar z^k$ is a real formal
  series (\ie $\be = \ti\be$ if we use
  notation~\eqref{notatifzw}); 
\medskip
  
  \noindent (iii)  if~$L$ is convergent, then one can find a convergent
  solution~$\Phi$, and all other convergent solutions are obtained by
  multiplying it by the exponential of an arbitrary imaginary convergent
  $2\pi i\be$.
\end{lemma}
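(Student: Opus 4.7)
The plan is to treat (i) and (ii) simultaneously by an induction on degree that centers on a single $\C$-linear operator, and to handle (iii) via a convergent factorization of the holomorphic extension of~$L$.

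Writing $\Phi = z + \sum_{n \ge 2} \Phi_n$ with $\Phi_n$ homogeneous of degree~$n$ in $(z, \bar z)$, I would focus on
\[
T_n \col P \mapsto z\bar P + \bar z P,
\]
which sends homogeneous polynomials of degree~$n$ to those of degree~$n+1$. Equating degree-$(n+1)$ parts of $\Phi\bar\Phi = L$ gives
\[
T_n(\Phi_n) = L_{n+1} - \sum_{2 \le a \le n-1} \Phi_a\, \overline{\Phi_{n+1-a}},
\]
whose right-hand side depends only on $\Phi_2, \ldots, \Phi_{n-1}$. A direct computation on the coefficients of $z^r \bar z^s$ will show that $T_n$ surjects onto the subspace of degree-$(n+1)$ polynomials~$Q$ satisfying the reality condition $Q_{r,s} = \ov{Q_{s,r}}$, and that $\ker T_n = \{\,2\pi i\, z\, \be \mid \be = \ti\be\ \text{homogeneous of degree}\ n-1\,\}$. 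Since~$L$ and the inductive right-hand side are real, the step is solvable, yielding~(i). For~(ii), given two solutions $\Phi, \Phi'$, I would build $\be = \sum_{k \ge 1} \be_k$ degree by degree so that $\Phi\cdot e^{2\pi i \be} = \Phi'$: assuming $\Psi := \Phi\cdot e^{2\pi i(\be_1 + \cdots + \be_k)}$ matches $\Phi'$ through degree $k+1$, the leading discrepancy~$\Theta_{k+2}$ in $\Psi - \Phi'$ will satisfy $T_{k+2}(\Theta_{k+2}) = 0$ (upon extracting the degree-$(k+3)$ part of $|\Psi|^2 - |\Phi'|^2 = 0$), so $\Theta_{k+2} = 2\pi i\, z\, \be_{k+1}$ for a unique real~$\be_{k+1}$; multiplying $\Psi$ by $e^{2\pi i \be_{k+1}}$ closes the induction.

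For~(iii), I would first produce one convergent solution~$\Phi_0$. Let $\hat L(z, w)$ denote the holomorphic extension of~$L$ to a polydisc in~$\C^2$, so that $\hat L(z, w) = zw + \text{(terms of degree} \ge 3)$ and $\ti{\hat L} = \hat L$. Since the tangent cone $\{zw = 0\}$ of $\{\hat L = 0\}$ at the origin consists of two distinct smooth lines, the standard branch decomposition theorem for plane curves (a consequence of Weierstrass preparation / Newton--Puiseux) will supply a convergent factorization
\[
\hat L(z, w) = \bigl(z - \ti\phi(w)\bigr)\bigl(w - \phi(z)\bigr)\, U(z, w),
\]
with $\phi \in \C\{z\}$ vanishing at~$0$ to order~$\ge 2$ and~$U$ a convergent unit with $U(0,0) = 1$; the reality $\ti{\hat L} = \hat L$ swaps the two branches and forces $\ti U = U$. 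Then $V := \exp(\tfrac{1}{2}\log U)$ is a convergent unit with $\ti V = V$ and $V\ti V = U$, and $\hat\Phi_0 := (z - \ti\phi(w))\, V$ satisfies $\hat\Phi_0\cdot \ti{\hat\Phi_0} = \hat L$ with $\hat\Phi_0 = z + \text{(higher order)}$, so $\Phi_0(z) := \hat\Phi_0(z, \bar z)$ is the desired convergent solution. For any other convergent solution~$\Phi'$, the extensions $\hat\Phi'$ and $\hat\Phi_0$ vanish on the same branch of~$\{\hat L = 0\}$ (the one tangent to~$\{z = 0\}$), so $\hat\Phi'/\hat\Phi_0$ extends to a convergent holomorphic unit equal to~$1$ at the origin; the identity $(\hat\Phi'/\hat\Phi_0)\cdot \ti{(\hat\Phi'/\hat\Phi_0)} = 1$ then writes it as $e^{2\pi i \hat\be}$ with $\ti{\hat\be} = \hat\be$ convergent, and $\be(z) := \hat\be(z, \bar z)$ is the desired convergent real~$\be$.

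The main obstacle I anticipate is the convergent factorization step in~(iii); the coefficient computations in~(i)--(ii) are routine once the operator~$T_n$ is analyzed. The same branch decomposition covers the degenerate case $\hat L(z, 0) \equiv 0$, in which $\phi \equiv 0$ and the formula reduces to $\hat L = zw\cdot U$.
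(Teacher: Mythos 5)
Your argument is correct, but it follows a genuinely different route from the paper's. For~(i) the paper does not proceed degree by degree: it invokes a formal Morse lemma, writing $L=x^2A+2xyB+y^2C$ via the integral identity~\eqref{eqTAF} and exhibiting the explicit square root $\Phi=\frac{xA+yB}{a}+i\,y\,c\,d$ with $a=\sqrt A$, $c=\sqrt C$, $d=\sqrt{1-B^2/(AC)}$; for~(ii) it sets $\Psi=\Phi_2\circ\Phi_1^{-1}$, which satisfies $\Psi\ov\Psi=z\bar z$, and applies Lemma~\ref{lemformG} to get $\Psi(z)=z\,e^{2\pi iB(z,\bar z)}$ with $B$ real, whence $\be=B\circ\hat\Phi_1$. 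The payoff of that explicit construction shows in~(iii): convergence of~$\Phi$ is immediate because the formulas for $A,B,C,a,c,d$ manifestly preserve convergence, and convergence of~$\be$ comes from the ``Moreover'' clause of Lemma~\ref{lemformG}. Your cohomological induction on the operator $T_n(P)=z\ov P+\bar zP$ is a clean and correct substitute for (i)--(ii): the surjectivity of~$T_n$ onto the polynomials with $Q_{r,s}=\ov{Q_{s,r}}$ and the identification $\ker T_n=2\pi i\,z\cdot\{\text{real homogeneous }\be\}$ both check out (the equations for $(r,s)$ and $(s,r)$ are conjugate, the case $s=0$ forces $P_{0,n}=0$ in the kernel), and the reality of the right-hand side is preserved at each step. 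Its drawback is that a term-by-term construction gives no control on coefficient growth, which is why you are driven to the Weierstrass/branch-decomposition argument for~(iii); that argument is also correct (the two smooth branches of $\{\hat L=0\}$ are swapped by $f\mapsto\ti f$, the unit~$U$ has a Hermitian-symmetric square root, and the quotient of two convergent solutions is a unit of constant modulus~$1$), but it is markedly heavier machinery than the paper needs. In exchange, your route makes the linear structure of the problem---the relevant cohomological operator and its kernel, which is exactly the gauge freedom of~(ii)---more transparent than the paper's closed formulas do.
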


\begin{proof}
  (i)
The existence of $\Phi$ is nothing but a formal version of the classical {\it Morse lemma} which, in this simple case, can be proved directly as follows.
Using the notations $z=x+iy$ and (slight abuse) $L(x,y)=L(z)$, 
we have the following identity in the space of formal (resp.\
convergent) series in two
variables:
%
%
\begin{equation}   \label{eqTAF}
L(x,y)=x\int_0^1\frac{\partial L}{\partial x}( t  x, t  y)d t +
y\int_0^1\frac{\partial L}{\partial y}( t  x, t  y)d t .
\end{equation}
Applying
this successively to $L(x,y),\, \frac{\partial L}{\partial x}$ and
$\frac{\partial L}{\partial y}$, and recalling that these three formal
series vanish at $(0,0)$, we get three real formal series in two 
variables, 
which we write
$A(x,y),B(x,y),C(x,y)\in \mathbb{R}[[x,y]]$, such that
\begin{equation*}
\left\{
\begin{split}
&L(x,y)=x^2A(x,y)+2xy B(x,y)+y^2C(x,y),\\[1ex]
& A(0,0) =C(0,0)=1,\enspace B(0,0)=0.
\end{split}
\right.
\end{equation*}
The formal series~$A$ and~$C$ have multiplicative inverses, and the
formal series $A-1$, $C-1$ and $\frac{B^2}{AC}$ can be substituted in
the formal series $\sqrt{1+T} = 1 + \frac12 T + \cdots$.
We thus define $a \defeq \sqrt{1+(A-1)}$,
$c \defeq\sqrt{1+(C-1)}$
and $d \defeq \sqrt{1 - \frac{B^2}{AC}}$; these are elements of $\R[[x,y]]$ such
that
$a^2=A$, $c^2=C$, $d^2=1 - \frac{B^2}{AC}$ and $a(0,0) =c(0,0)=d(0,0)=1$.
Defining~$\Phi$ by the formula 
\begin{equation}\label{square root}
  \Phi(x+iy) \defeq \frac{x A(z)+y B(z)}{a(z)} + i\, y \, c(z) d(z),
\end{equation}
one can check that $\Phi$ is a formal tangent-to-identity
diffeomorphism and $\nu\circ\Phi=L$.

\medskip

(ii)
%
%
%
Suppose that~$\Phi_1(z)$ and~$\Phi_2(z)$ are two formal
tangent-to-identity diffeomorphisms such that $\nu\circ\Phi_2=\nu\circ\Phi_1$
and let $\Psi \defeq \Phi_2 \circ \Phi_1\ii$:
this is a formal tangent-to-identity diffeomorphism satisfying
$\Psi(z) \ov{\Psi(z)} = z \bar z$,
Lemma~\ref{lemformG} thus implies that
$\Psi(z) = z\,\exp\!\big(2\pi i B(z,\bar z)\big)$
with $\ti B= B$,
hence $\Phi_2 = \Phi_1 \exp(2\pi i B\circ\hat\Phi_1)$ with notation~\eqref{notahatF}.
We thus get the desired conclusion with
$\be \defeq B\circ \hat\Phi_1$,
since $\ti\be = \widetilde{B\circ\hat\Phi_1} = \ti B \circ \hat\Phi_1 = \be$.

\medskip

(iii)
If $L$ is convergent, then the explicit formulas for $A(z)$, $B(z)$
and $C(z)$ derived from~\eqref{eqTAF} show that they are convergent,
as well as $a$, $c$ and~$d$,
hence the formal diffeomorphism~$\Phi$ constructed in~\eqref{square
  root} is convergent
and the conclusion follows.
\end{proof}

\begin{lemma}\label{exist}
  Given $L$ admissible for~$F\in \Fom$,
  %
  %
  any formal tangent-to-identity diffeomorphisms~$\Phi$ such that
  $L=\nu\circ\Phi$ is a formal geometric normalization of~$F$.
\end{lemma}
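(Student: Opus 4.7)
The plan is to execute a short diagram chase, essentially reversing the equivalence that was already displayed just before Definition~\ref{F-ad}. Since $\Phi$ is a formal tangent-to-identity diffeomorphism, its formal inverse $\Phi\ii$ is well-defined, so I set $G \defeq \Phi\circ F\circ \Phi\ii$. This $G$ is a formal diffeomorphism whose linear part is the same as that of $F$, namely $\la\,\cdot$; hence $G\in\Fom$, and it remains only to check that it is of the form required by~\eqref{eqcharacterisG}.

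By the hypothesis of $F$-admissibility of $L$, there exists $\Ga\in\gG$ such that $L\circ F=\Ga\circ L$; together with $L=\nu\circ\Phi$ and $\Phi\circ\Phi\ii=\ID$, this gives
\begin{equation*}
\nu\circ G \;=\; \nu\circ\Phi\circ F\circ\Phi\ii \;=\; L\circ F\circ\Phi\ii \;=\; \Ga\circ L\circ\Phi\ii \;=\; \Ga\circ\nu\circ\Phi\circ\Phi\ii \;=\; \Ga\circ\nu.
\end{equation*}
This is exactly the statement $|G(\ze)|^2=\Ga(|\ze|^2)$, and since $\Ga\in\gG$ already has the required shape $R+\sum_{n\ge 2}\Ga_n R^n$ with $\Ga_n\in\R$, condition~\eqref{eqcharacterisG} holds. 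Thus $G$ is a formal geometric normal form, and $\Phi$ is a formal geometric normalization of $F$ by definition.

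I do not foresee any real obstacle here. The only things worth noting are that all compositions make sense in the ring of formal power series in $(z,\bar z)$ without any convergence assumption on $\Phi$, $L$ or $\Ga$, and that the witness $\Ga$ produced by the admissibility of $L$ is exactly the $\Ga$ that ends up characterizing the geometric normal form $G$---so no new object needs to be constructed, only the identity $\nu\circ G=\Ga\circ\nu$ has to be extracted from the definitions.
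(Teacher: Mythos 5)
Your proof is correct and follows essentially the same route as the paper's: the paper simply invokes Commutative Diagram~1, which encodes exactly the diagram chase $\nu\circ G=\nu\circ\Phi\circ F\circ\Phi\ii=L\circ F\circ\Phi\ii=\Ga\circ L\circ\Phi\ii=\Ga\circ\nu$ that you write out explicitly. Nothing is missing.
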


\begin{proof}
 Choose $\Ga\in\gG$ such that $(L,\Ga)$ is $F$-admissible. According
    to Commutative Diagram~1, $G= \Phi\circ F\circ \Phi^{-1}$ 
    satisfies $\nu\circ G=\Gamma\circ\nu$.
%
  %
\end{proof}


Therefore, the formal geometric normalizations~$\Phi$ of any
$F\in\Fom$ can be retrieved from its admissible formal series~$L$.


\subsection{The unique invariant formal foliation $\cF_F$}   \label{secforminvfol}

We now show that, although formal geometric normalizations are never
unique, one can define a unique ``invariant formal foliation''.
Observe that the group~$\gG$ acts from the left on the set~$\gL$:
\begin{equation}    \label{eqleftaction}
(g,L) \in \gG \times \gL \mapsto g\circ L \in \gL.
\end{equation}
\begin{definition}\label{deffol}
  We call \emph{formal foliation} any element of the quotient set
  $\gG\backslash \gL$.
  We use the notation
\begin{equation}    \label{eqleftquotient}
  L\in\gL \mapsto [L]=\{g\circ L \mid g\in \gG\} \in\gG\backslash \gL.
\end{equation}
for the canonical projection.
We say that a formal foliation is \emph{analytic} if at least one of
  its representatives is convergent (see footnote~\ref{footnCV}).
\end{definition}

Our motivation is that, if 
%
%
$L\in\gL$ is convergent for $|z|$ small enough,
then we get an analytic singular foliation by considering the
family of the levels $L\ii(c)$, $c>0$ small,
but we want to identify this foliation with the one induced by $g\circ
L$ for any convergent $g\in\gG$,
since this amounts to a simple reparametrisation of the levels
($c\mapsto g\ii(c)$).


\smallskip

\noindent\textit{Example:}
The standard foliation $\{|z|=c \mid c\in
\Rp\}$ by round circles is $[\nu]$, with~$\nu$ as in~\eqref{eqdefnu}.

\smallskip

Note that to prove that a formal foliation is divergent, \ie not
analytic, one must prove that \emph{all} its representatives are divergent.

\smallskip

The group~$\gF$ of all formal local diffeomorphisms of $(\R^2,0)$
whose linear part is a rotation
(of which~$\Fom$ is a subset)
acts from the right on~$\gL$, and thus also on
$\gG\backslash \gL$.
We shall use the notation
\[
  [L]\circ F \defeq [L\circ F]
  \quad \text{for $F\in\gF$ and $L\in\gL$.}
\]
If $[L]\circ F=[L]$, we say that the formal foliation~$[L]$ is
$F$-invariant, or that~$F$ preserves~$[L]$;
this precisely means that $L\circ F$ is of the form $g\circ L$ for
some $g\in\gG$.

\medskip

\noindent\textit{Example (case of the standard foliation $[\nu]$):}

\begin{itemize}
\item A formal diffeomorphism $G\in\Fom$   
  %
  %
  preserves~$[\nu]$ if and only if it is a formal
  geometric normal form.

\item According to Lemma~\ref{lemformG}, a formal tangent-to-identity
diffeomorphism~$\Phi$ preserves $[\nu]$ if and only if it is of the
form
\begin{equation}   \label{eqPhialbet}
\Phi(z)=z\big(1+\alpha(|z|^2)\big)e^{2\pi i\beta(z)}
  \end{equation}
with a real univariate formal series $\alpha(R)$ and a formal series
$\beta(z) = \sum_{j+k\ge1} \beta_{j,k}z^j\bar z^k$ that is real
in the sense that $\beta_{j,k}=\ov{\beta_{k,j}}$ for all $(j,k)$,
\ie $\be = \ti\be$ if we use notation~\eqref{notatifzw}.  

\item The equality of two formal foliations can be described
  geometrically: for $L_1,L_2\in \gL$, $[L_1]=[L_2]$ if and only if,
  for any $\Phi_1, \Phi_2$ such that $\nu\circ \Phi_i=L_i$ as in Lemma~\ref{Phi}, 
  $\Phi_1\circ\Phi_2^{-1}$ preserves~$[\nu]$.

\end{itemize}

The following lemma is a direct consequence of Definitions~\ref{F-ad}
and~\ref{deffol} and Lemma~\ref{exist}:
\begin{lemma}   \label{propequivstatements}
  Given $L\in\gL$ and $F\in \Fom$, the following statements are 
equivalent:
%
%
\begin{alignat*}{2}
  & \text{(i)}\; && \text{$L$ is $F$-admissible}\\
  \ArrowBetweenLines
  & \text{(ii)}\;  && \text{the formal foliation $[L]$ is $F$-invariant}\\
  \ArrowBetweenLines
  & \text{(iii)} \; && \text{any formal tangent-to-identity diffeomorphism~$\Phi$ such that
      $L=\nu\circ \Phi$}\\
  &&& \text{is a formal geometric normalization.}
\end{alignat*}
%
%
%
  %
  %
  %
%
%
\end{lemma}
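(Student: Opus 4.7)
The plan is to chase around the definitions and invoke the two previous lemmas. I would split the argument into the three implications (i)$\Rightarrow$(ii)$\Rightarrow$(iii)$\Rightarrow$(i), each of which should be one or two lines.

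For (i)$\Leftrightarrow$(ii), I would just unfold definitions. Admissibility of $L$ says there exists $\Ga\in\gG$ with $L\circ F=\Ga\circ L$; $F$-invariance of $[L]$ says $[L\circ F]=[L]$, which by the definition of the left action~\eqref{eqleftaction}--\eqref{eqleftquotient} means exactly that $L\circ F=g\circ L$ for some $g\in\gG$. These are the same condition (with $g=\Ga$), so both implications are immediate.

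For (i)$\Rightarrow$(iii), I would apply Lemma~\ref{Phi}(i) to fix at least one $\Phi$ with $L=\nu\circ\Phi$, then invoke Lemma~\ref{exist} verbatim: it states precisely that if $L$ is $F$-admissible then \emph{any} such $\Phi$ is a formal geometric normalization. So there is nothing more to do here.

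For (iii)$\Rightarrow$(i), I would use Lemma~\ref{Phi}(i) to produce a $\Phi$ satisfying $L=\nu\circ\Phi$; by (iii), this $\Phi$ is a formal geometric normalization, so $G\defeq\Phi\circ F\circ\Phi\ii$ satisfies $\nu\circ G=\Ga\circ\nu$ for some $\Ga\in\gG$ (by the characterization~\eqref{eqcharacterisG}, or equivalently by applying Lemma~\ref{lemformG} to $\la\ii G$). Composing with $\Phi$ on the right gives $\nu\circ\Phi\circ F=\Ga\circ\nu\circ\Phi$, i.e.\ $L\circ F=\Ga\circ L$, which is $F$-admissibility. There is no obstacle: all three statements say the same thing modulo Lemma~\ref{Phi} (existence of $\Phi$ lifting $L$) and Lemma~\ref{exist} (once a lift exists, admissibility and normalization are interchangeable).
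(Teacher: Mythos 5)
Your proof is correct and follows exactly the route the paper intends: the paper gives no written proof, merely asserting that the lemma "is a direct consequence of Definitions~\ref{F-ad} and~\ref{deffol} and Lemma~\ref{exist}," and your three implications (unfolding the definitions for (i)$\Leftrightarrow$(ii), quoting Lemma~\ref{exist} for (i)$\Rightarrow$(iii), and using Lemma~\ref{Phi}(i) plus the definition of a geometric normal form for (iii)$\Rightarrow$(i)) are precisely the details being left to the reader.
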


We can now show 
    \begin{proposition}[Uniqueness of the invariant foliation]\label{uniqueL}
      Any $F\in\Fom$ admits a unique invariant formal foliation
      $\cF_F\in \gG\backslash \gL$:
      \begin{align*}
        \cF_F &=[L] && \hspace{-5em}\text{for any $F$-admissible $L$}\\[1ex]
      &=[\nu]\circ \Phi &&\hspace{-5em}\text{for any formal geometric normalization $\Phi$ of $F$}.
      \end{align*}
      Moreover, the invariant formal foliation of~$F$ is intrinsic
      (natural), in the sense that if one conjugates~$F$ by
      %
      a formal
      local change of coordinates $\Psi\in\gF$ (note that the linear
      part of~$F$ does not change), then $\cF_F$ changes accordingly:
  \[
    \cF_{\Psi\ii\circ F\circ \Psi} = \cF_F \circ \Psi.
    \]
    In particular, if $\cF_F$ 
      is analytic, then so is $\cF_{\Psi\ii\circ F\circ \Psi}$ for any analytic
      $\Psi\in\gF$.
  \end{proposition}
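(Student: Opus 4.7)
For existence, I would use that $\om$ irrational implies any $F\in\Fom$ admits a formal normalization $\Phi_0$, conjugating $F$ to a formal normal form $N=\Phi_0\circ F\circ\Phi_0^{-1}$ that commutes with the group of rotations. Since $N$ preserves the standard foliation $[\nu]$, there exists $\Gamma_N\in\gG$ with $\nu\circ N=\Gamma_N\circ\nu$, and then $L_0:=\nu\circ\Phi_0$ satisfies $L_0\circ F=\Gamma_N\circ L_0$; so $L_0$ is $F$-admissible and $\cF_F:=[L_0]$ is an $F$-invariant formal foliation. Once uniqueness is established, the description $\cF_F=[\nu]\circ\Phi$ for any formal geometric normalization $\Phi$ is immediate, since Lemma~\ref{exist} says $L:=\nu\circ\Phi$ is $F$-admissible.

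For uniqueness, given any $F$-admissible $L$ with $L\circ F=\Gamma\circ L$, I would transfer to the normal-form coordinate: the series $L':=L\circ\Phi_0^{-1}$ is $N$-admissible. It then suffices to show that every $N$-admissible $L'\in\gL$ is of the form $g\circ\nu$ for some $g\in\gG$, as this yields $L=g\circ L_0$ and hence $[L]=[L_0]$. I would prove the latter claim by induction on the total degree $d=r+s$, establishing $L'_{rs}=0$ whenever $r\ne s$. Writing $N(\zeta)=\lambda\zeta\mu(|\zeta|^2)$ with $\mu(0)=1$ and comparing coefficients of $\zeta^r\bar\zeta^s$ on both sides of $L'\circ N=\Gamma\circ L'$: the inductive hypothesis eliminates (i) the left-hand contributions from $L'_{r-k,s-k}$ with $k\ge 1$, since $(r-k)-(s-k)=r-s\ne 0$, and (ii) the right-hand contributions from $\Gamma_n (L')^n$ with $n\ge 2$, since any nonzero product of lower-degree factors $L'_{p_i q_i}$ must, by induction, have all $p_i=q_i$, contradicting $\sum p_i=r\ne s=\sum q_i$. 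What remains is $(\lambda^{r-s}-1)L'_{rs}=0$, which forces $L'_{rs}=0$ by irrationality of $\om$.

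For naturality, if $F'=\Psi^{-1}\circ F\circ\Psi$ with $\Psi\in\gF$, direct substitution shows $(L,\Gamma)$ is $F$-admissible iff $(L\circ\Psi,\Gamma)$ is $F'$-admissible, yielding $\cF_{F'}=[L\circ\Psi]=\cF_F\circ\Psi$. If $\cF_F$ is analytic, one may take $L$ convergent; convergence of $\Psi$ then forces convergence of $L\circ\Psi$, so $\cF_{F'}$ is analytic as well. I expect the main obstacle to be the inductive step in the uniqueness claim---specifically, the verification that the $n\ge 2$ contributions from $\Gamma_n (L')^n$ vanish, which is a combinatorial argument relying on the asymmetry $r\ne s$ and is thus distinct in flavor from the diophantine cancellation provided by $\lambda^{r-s}\ne 1$.
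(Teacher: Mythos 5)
Your proof is correct, but it takes a genuinely different route from the paper's. The paper reduces uniqueness to Lemma~\ref{preserve}: writing $L_i=\nu\circ\Phi_i$, it shows $\Psi=\Phi_1\circ\Phi_2^{-1}$ preserves $[\nu]$ by viewing $\Psi$ as a conjugacy between the two geometric normal forms, pushing one of them to a normal form, and then invoking the external result Lemma~\ref{cssw3} (Lemma~3 and Corollary~1 of \cite{CSSW}) that any formal normalization of a geometric normal form preserves $[\nu]$. You instead transport a single admissible $L$ to the normal-form coordinate and prove directly, by induction on $r+s$, that an $N$-admissible series has $L'_{rs}=0$ for $r\neq s$ --- your two cancellation mechanisms (the diagonal shift $(r-k,s-k)$ keeping $r-s$ fixed on the left, and the degree count $\sum(p_i+q_i)\ge 2n$ forcing all factors of $(L')^n$ to be resonant on the right) are exactly right, and the conclusion $(\lambda^{r-s}-1)L'_{rs}=0$ closes the induction since $\omega\notin\Q$. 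This makes your argument self-contained where the paper's main proof is not; it is in fact closer in spirit to the paper's \emph{alternative} self-contained proof (the Remark following Corollary~\ref{determined}, which matches resonant parts via~\eqref{resonant1} and then invokes the bijection of Lemma~\ref{couple}), but you avoid even that machinery by exploiting the normal-form coordinate, at the modest cost of first invoking the existence of a formal normalization. Your treatment of naturality coincides with the paper's (it appears verbatim in a commented-out block of the source): $(L,\Gamma)$ $F$-admissible iff $(L\circ\Psi,\Gamma)$ is $(\Psi^{-1}\circ F\circ\Psi)$-admissible. One cosmetic point: when you conclude $L'=g\circ\nu$, note explicitly that $g\in\gG$ (i.e.\ the $L'_{nn}$ are real) because $L'\in\gL$ satisfies $L'_{nn}=\ov{L'_{nn}}$.
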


   \begin{proof}
     Take any two $F$-admissible formal series $L_1$ and $L_2$.
     Write $L_i=\nu\circ\Phi_i$, $i=1,2$: by Lemma~\ref{exist}, $\Phi_1$ and $\Phi_2$ are formal geometric normalizations. We need show that 
     $[L_1]=[L_2]$, that is $[\nu\circ\Phi_1] = [\nu\circ\Phi_2]$, or
     $[\nu]\circ\Phi_1=[\nu]\circ\Phi_2 $ or 
     $[\nu]\circ\Phi_1\circ\Phi_2^{-1}=[\nu]$.
     Hence uniqueness of the invariant foliation will follow from
     Lemma~\ref{preserve} below.
   \end{proof}
   
  \begin{lemma}\label{preserve}
   Let $F\in\Fom$.
   For any two geometric normalizations, $\Phi_1$ and~$\Phi_2$, the composition
$\Psi\defeq\Phi_1\circ \Phi_2^{-1}$ preserves the foliation
$[\nu]$;
this means that there exists $g\in\gG$ as in Commutative Diagram~2.
Conversely, given a geometric normalization~$\Phi_2$ of~$F$ and any
formal diffeomorphism~$\Psi$ preserving the foliation $[\nu]$,
$\Phi_1\defeq\Psi\circ \Phi_2$ is also a geometric normalization of $F$.
\end{lemma}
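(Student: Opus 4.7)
My plan is to handle the two directions of the lemma separately. For the converse, suppose $\Psi$ is a formal tangent-to-identity diffeomorphism preserving the foliation $[\nu]$ and let $\Phi_2$ be a geometric normalization of $F$, with $G_2\defeq\Phi_2\circ F\circ\Phi_2\ii$ and $\Gamma_2\in\gG$ satisfying $\nu\circ G_2=\Gamma_2\circ\nu$. Lemma~\ref{lemformG} yields $g\in\gG$ with $\nu\circ\Psi=g\circ\nu$, from which $\nu\circ\Psi\ii=g\ii\circ\nu$ follows. Setting $\Phi_1\defeq\Psi\circ\Phi_2$ and $G_1\defeq\Phi_1\circ F\circ\Phi_1\ii=\Psi\circ G_2\circ\Psi\ii$, a direct computation gives $\nu\circ G_1=g\circ\Gamma_2\circ g\ii\circ\nu$, so that $G_1$ preserves $[\nu]$ and $\Phi_1$ is a geometric normalization of $F$.

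For the forward direction, let $\Phi_1,\Phi_2$ both be geometric normalizations of $F$, with corresponding $G_i\defeq\Phi_i\circ F\circ\Phi_i\ii$ and $\Gamma_i\in\gG$ satisfying $\nu\circ G_i=\Gamma_i\circ\nu$. Set $\Psi\defeq\Phi_1\circ\Phi_2\ii$ and $H\defeq\nu\circ\Psi\in\gL$. From $\Psi\circ G_2=G_1\circ\Psi$ and $\nu\circ G_1=\Gamma_1\circ\nu$ one derives the semi-conjugacy
\[
H\circ G_2=\Gamma_1\circ H.
\]
It then suffices to show that $H$ depends only on $z\bar z$, i.e.\ that the off-diagonal coefficients $H_{rs}$ with $r\ne s$ all vanish; for then $H=h\circ\nu$ for some univariate formal series $h$, and applying Lemma~\ref{lemformG} to $\Psi$ yields that $h\in\gG$ and $\Psi$ has the form~\eqref{eqformPsi}, which is exactly the statement that $\Psi$ preserves $[\nu]$.

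The vanishing of these off-diagonal coefficients I would prove by induction on $N\defeq r+s$, exploiting the normal form $G_2(z)=\la z\bigl(1+f(z\bar z)\bigr)e^{2\pi i\beta(z)}$ (with $f,\beta$ real) provided by Lemma~\ref{lemformG} applied to $\la\ii G_2$. This gives
\[
G_2^a\bar G_2^b=\la^{a-b}z^a\bar z^b\bigl(1+f(z\bar z)\bigr)^{a+b}e^{2\pi i(a-b)\beta(z)}.
\]
At the top degree $r+s=N+1$ with $r\ne s$, the only surviving contribution to $(H\circ G_2)_{rs}$ is $H_{rs}\la^{r-s}$: the contributions from $H_{ab}$ with $a\ne b$ and $a+b\le N$ vanish by induction, while those with $a=b$ and $a+b\le N$ involve only the purely radial factor $(1+f(z\bar z))^{2a}$ and therefore feed into diagonal coefficients alone. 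Expanding $\Gamma_1(H)=H+\sum_{n\ge 2}\Gamma_{1,n}H^n$, the same kind of argument shows that every off-diagonal coefficient of $H^n$ at degree $N+1$ for $n\ge 2$ must contain a lower-degree off-diagonal factor of $H$, which is zero by induction, so $(\Gamma_1\circ H)_{rs}=H_{rs}$. Equating the two sides gives the homological equation $(\la^{r-s}-1)H_{rs}=0$, and the irrationality of $\om$ forces $H_{rs}=0$. The main obstacle is precisely this degree-by-degree bookkeeping: one must verify that the geometric-normal-form structure of $G_2$, combined with the inductive hypothesis, eliminates every potential off-diagonal contribution so that the conjugation equation reduces cleanly to the diophantine condition above.
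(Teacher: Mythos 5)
Your proof is correct, but it follows a genuinely different route from the paper's. For the forward direction the paper computes nothing: it first conjugates $G_1$ to a normal form~$N$ by a $[\nu]$-preserving change of coordinates (which exists since~$\om$ is irrational), observes that~$\Psi$ then becomes a formal normalization of the geometric normal form~$G_2$, and invokes Lemma~\ref{cssw3} (imported from Lemma~3 and Corollary~1 of \cite{CSSW}) to conclude that such a normalization preserves~$[\nu]$. You instead work directly with $H\defeq\nu\circ\Psi\in\gL$, derive the semi-conjugacy $H\circ G_2=\Ga_1\circ H$, and kill the off-diagonal coefficients $H_{rs}$ ($r\neq s$) by induction on $r+s$, using the polar form of~$G_2$ supplied by Lemma~\ref{lemformG} and the non-resonance of~$\la$. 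Your bookkeeping is sound: the diagonal blocks $H_{aa}\,|G_2|^{2a}=H_{aa}(z\bar z)^a\big(1+f(z\bar z)\big)^{2a}$ are functions of $z\bar z$ only, the lower-order off-diagonal blocks vanish inductively, the top-degree block contributes exactly $\la^{r-s}H_{rs}$, and every off-diagonal coefficient of~$H^n$ ($n\ge2$) at top degree contains a lower-degree off-diagonal factor of~$H$; this yields $(\la^{r-s}-1)H_{rs}=0$ and hence $H=h\circ\nu$ with $h\in\gG$, which is precisely the statement that~$\Psi$ preserves~$[\nu]$. What your approach buys is self-containedness --- no appeal to \cite{CSSW} --- much in the spirit of the alternative uniqueness argument via resonant parts that the paper sketches after Corollary~\ref{determined}; what the paper's approach buys is brevity. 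Your treatment of the converse (computing $\nu\circ G_1=g\circ\Ga_2\circ g\ii\circ\nu$) spells out the observation that the paper dismisses as obvious.
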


\vspace{-2.5ex}

\begin{center}
  \includegraphics[scale=0.6]{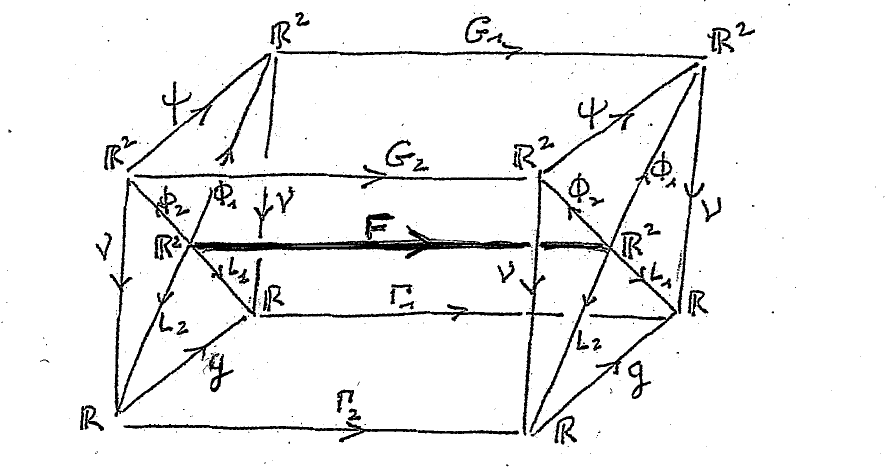}
  \vspace{-1ex}
  
  Commutative Diagram~2: Comparison between two geometric normalizations
  
\end{center}

\begin{proof}[Proof of Lemma~\ref{preserve}]
  The second statement is obvious.  
  For the first one, we consider the corresponding geometric normal forms $G_i\defeq\Phi_i\circ F\circ
\Phi_i^{-1}$, $i=1,2$. 
We can view~$\Psi$ as a conjugacy between~$G_2$ and~$G_1$.
The following Lemma allows us to replace~$G_1$ with a normal form~$N$
by means of a formal change of coordinate~$\Psi_1$ that preserves~$[\nu]$.
Now, in this new coordinate, $\Psi$ appears as a formal normalization
of~$G_2$, and thus, again by the same lemma, preserves~$[\nu]$
(or, if one prefers, $\Psi_2\defeq \Psi_1\circ\Psi$ satisfies
$\Psi_2\circ G_2 = N\circ\Psi_2$, thus $\Psi_2$ preserves~$[\nu]$ by
Lemma~\ref{cssw3},
and so does~$\Psi$).
\end{proof}

\begin{lemma}[\cite{CSSW}, Lemma 3 and Corollary 1] \label{cssw3}
Suppose that~$G$ is a formal geometric normal form.
  Then any formal normalization 
  of~$G$ preserves~$[\nu]$.
 \end{lemma}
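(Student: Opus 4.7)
\emph{Plan.} I would follow the strategy of \cite{CSSW}: first exhibit a particular $[\nu]$-preserving normalization of~$G$, then show that any other normalization of~$G$ differs from it by a map that also preserves~$[\nu]$. For the first step, apply Lemma~\ref{lemformG} to $\la^{-1}G$ to write the geometric normal form in polar form as
\[
G(\zeta) = \la\zeta\bigl(1+f(|\zeta|^2)\bigr)e^{2\pi i\beta(\zeta)},
\]
with $f\in R\,\R[[R]]$ and $\tilde\beta=\beta$. Set $n(R)\defeq\sum_k\beta_{k,k}R^k$, which is real since $\beta_{k,k}\in\R$, and define the classical normal form $N(\zeta)\defeq\la\zeta\bigl(1+f(|\zeta|^2)\bigr)e^{2\pi in(|\zeta|^2)}$. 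Seek a normalization of the shape $\Psi_0(\zeta)\defeq\zeta\exp\bigl(2\pi iB(\zeta)\bigr)$ with $\tilde B=B$; since $|\Psi_0|^2=\nu$, such a $\Psi_0$ manifestly preserves~$[\nu]$. The equation $\Psi_0\circ G=N\circ\Psi_0$ reduces, after cancellation of the common radial factor, to the cohomological equation
\[
B\circ G-B=\beta-n\circ\nu,
\]
whose right-hand side has no diagonal $\zeta^k\bar\zeta^k$ component. Order by order this reads $(\la^{r-s}-1)B_{r,s}=(\text{known lower-order data})$ for $r\neq s$; the non-resonance $\om\notin\Q$ yields a unique solution, and the reality $\tilde B=B$ is preserved throughout.

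For the second step, let $\Psi$ be an arbitrary formal normalization of~$G$, with $\Psi\circ G=N'\circ\Psi$ for some classical normal form~$N'$, and set $\Psi'\defeq\Psi\circ\Psi_0^{-1}$, so that $\Psi'\circ N=N'\circ\Psi'$. Writing $N(\zeta)=\la\zeta H(|\zeta|^2)$, $N'(\zeta)=\la\zeta H'(|\zeta|^2)$, and $\Psi'(\zeta)=\zeta+\sum_{j+k\ge2}\psi'_{j,k}\zeta^j\bar\zeta^k$, comparison of $\zeta^j\bar\zeta^k$-coefficients at order $j+k$ in $\Psi'\circ N=N'\circ\Psi'$ gives
\[
\la\bigl(\la^{j-k-1}-1\bigr)\psi'_{j,k}=(\text{contributions from lower orders}).
\]
Since both $N(\zeta)^r\overline{N(\zeta)}^s$ and $\Psi'(\zeta)^r\overline{\Psi'(\zeta)}^s$ are of the form $\la^{r-s}\zeta^r\bar\zeta^s$ times a series in~$|\zeta|^2$, an induction on $j+k$ shows that the right-hand side vanishes whenever $j\neq k+1$: lower-order $\psi'_{r,s}$ with $r\neq s+1$ vanish by the inductive hypothesis, and the remaining diagonal contributions only feed indices with $j-k=1$. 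Non-resonance then forces $\psi'_{j,k}=0$ for $j\neq k+1$, so $\Psi'(\zeta)=\zeta K(|\zeta|^2)$ for some $K\in\C[[R]]$ with $K(0)=1$; hence $|\Psi'|^2=|\zeta|^2|K(|\zeta|^2)|^2$ depends only on $|\zeta|^2$, which means $\Psi'$ preserves~$[\nu]$, and so does $\Psi=\Psi'\circ\Psi_0$.

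The main obstacle is the inductive bookkeeping in the second step: one must verify that the factors $\la^{r-s}$ coming from $N$ and~$N'$ combine with the purely diagonal structure of $\Psi'$ at lower orders to eliminate all off-diagonal ($j\neq k+1$) contributions on the right-hand side. This is the routine but delicate computation performed in \cite[Lemma~3 and Corollary~1]{CSSW}, to which I would simply refer.
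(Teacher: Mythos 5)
The paper offers no proof of Lemma~\ref{cssw3}: it is imported verbatim from \cite{CSSW} (Lemma~3 and Corollary~1), so there is no in-paper argument to compare yours against. That said, your two-step strategy is the right one and is essentially the one of \cite{CSSW}: first produce one $[\nu]$-preserving normalization $\Psi_0(\zeta)=\zeta e^{2\pi i B(\zeta)}$ of~$G$ (possible because a geometric normal form already has its radial part depending only on $|\zeta|^2$, so only the angular part needs normalizing), then observe that any other normalization~$\Psi$ gives a transition map $\Psi'=\Psi\circ\Psi_0^{-1}$ conjugating two classical normal forms, and kill its off-diagonal coefficients by non-resonance. Your induction in the second step is sound: since $N^{j'}\ov N^{\,k'}$ is $\la^{j'-k'}\zeta^{j'}\bar\zeta^{k'}$ times a series in $|\zeta|^2$, lower-order terms only propagate the weight $j-k$, so the right-hand side of $\la(\la^{j-k-1}-1)\psi'_{j,k}=(\text{l.o.t.})$ vanishes for $j\neq k+1$ and the conclusion $\Psi'(\zeta)=\zeta K(|\zeta|^2)$ follows.

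There is, however, one concrete flaw in your first step: you cannot prescribe $n(R)\defeq\sum_k\beta_{k,k}R^k$ in advance. Writing $G(\zeta)=\la\zeta u(\zeta)$ with $u=(1+f\circ\nu)e^{2\pi i\beta}$, the angular cohomological equation (with the correct signs it is $B\circ G-B=n\circ\nu-\beta$) reads at the diagonal position $(k,k)$
\[
0\cdot B_{k,k}\;+\;\sum_{(r,s)\neq(k,k)}B_{r,s}\,\la^{r-s}\cdot\Big(\text{coefficient of $\zeta^{k-r}\bar\zeta^{k-s}$ in $u^r\tilde u^{\,s}$}\Big)\;=\;n_k-\beta_{k,k}.
\]
Because $\beta$ is a genuine two-variable series, $u^r\tilde u^{\,s}$ is \emph{not} a function of $|\zeta|^2$ alone, so the sum on the left does not vanish in general; with your a priori choice of~$n_k$ the diagonal equations become constraints that generically fail, and the normal form you wrote down need not be conjugate to~$G$. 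The standard fix is to let $n_k$ be \emph{determined} by this very equation, order by order, simultaneously with the off-diagonal~$B_{r,s}$ (this is just the usual construction of the resonant part); reality $\tilde B=B$ then follows from uniqueness of the solution with vanishing diagonal part. With that correction your argument goes through. Note finally that your closing sentence defers the bookkeeping of step~2 back to \cite{CSSW}, which is neither more nor less than what the paper itself does.
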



\begin{remark}
  Lemma~\ref{preserve} says that, for any two geometric normalizations
  of $F\in\Fom$,
  \[
    \Phi_1\circ \Phi_2^{-1}(z) = z(1+\alpha(|z|^2))e^{2\pi i \beta(z)}
  \]
  with $\alpha$ and~$\beta$ as in~\eqref{eqPhialbet}. This is to be compared with
Lemma~3 or Corollary~1 of \cite{CSSW}, 
  which says that, for any two formal normalizations
  of a geometric normal form,
  \[
    \Phi_1\circ \Phi_2^{-1}(z) = z\big(1+a(|z|^2)\big)e^{2\pi i b(|z|^2)}
  \]
  with suitable real univariate formal series $a(R)$ and $b(R)$.
  \end{remark}

    As a consequence of the uniqueness of the invariant foliation, we
    get a complete description of all admissible pairs for a given
    $F\in \Fom$:
\begin{corollary}\label{setF}
  Let $F\in \Fom$ and pick an $F$-admissible pair $(L_0,\Gamma_0)$.
  The set of all $F$-admissible pairs coincides with the orbit of
  $(L_0,\Gamma_0)$ under the natural left action of~$\gG$ on
  $\gL\times\gG$, \ie
  \[
    \{\, (g\circ L_0,g\circ\Ga_0\circ g\ii) \mid
    g\in\gG \,\},
  \]
  and the set of all $F$-admissible formal series is $\{\, g\circ L_0 \mid
    g\in\gG \,\}$ (\ie it coincides with $[L_0]=\cF_F$).
  \end{corollary}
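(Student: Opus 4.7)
The plan is to prove the corollary in two directions, pivoting on Proposition~\ref{uniqueL} (uniqueness of the invariant foliation) and on a right-cancellation property for the composition~$\Ga\circ L$.

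First I would check that the formula $g\cdot(L,\Ga)\defeq (g\circ L,\, g\circ\Ga\circ g\ii)$ really defines a left action of~$\gG$ on $\gL\times\gG$, and that if $(L_0,\Ga_0)$ is $F$-admissible then so is every pair in its orbit. The latter is a one-line computation: for any $g\in\gG$,
\[
  (g\circ L_0)\circ F = g\circ(L_0\circ F) = g\circ\Ga_0\circ L_0 = (g\circ\Ga_0\circ g\ii)\circ(g\circ L_0),
\]
which shows that $(g\circ L_0,\, g\circ\Ga_0\circ g\ii)$ satisfies~\eqref{conj}. Hence the orbit of $(L_0,\Ga_0)$ is contained in the set of $F$-admissible pairs.

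Next I would prove the reverse inclusion. Let $(L,\Ga)$ be any $F$-admissible pair. By Lemma~\ref{propequivstatements}, the formal foliation $[L]$ is $F$-invariant, and by Proposition~\ref{uniqueL} it must coincide with the unique invariant foliation~$\cF_F = [L_0]$. In particular there exists $g\in\gG$ such that $L = g\circ L_0$. Substituting this into $L\circ F=\Ga\circ L$ and using the admissibility of $(L_0,\Ga_0)$, we obtain
\[
  (g\circ\Ga_0\circ g\ii)\circ L \;=\; g\circ\Ga_0\circ L_0 \;=\; g\circ L_0\circ F \;=\; L\circ F \;=\; \Ga\circ L.
\]
It remains to conclude $\Ga = g\circ\Ga_0\circ g\ii$ from the identity $\Ga\circ L = (g\circ\Ga_0\circ g\ii)\circ L$; this is the small technical step where I expect the only real verification. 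It follows from the fact that right-composition with~$L$ is injective on~$\gG$: indeed, if $h(R)=\sum_{n\ge n_0}h_n R^n\in\R[[R]]$ is nonzero with leading term $h_{n_0}R^{n_0}$ ($n_0\ge 2$), then, since $L(z)=z\bar z+O(|z|^3)$, the composition $h\circ L$ has leading term $h_{n_0}(z\bar z)^{n_0}$ and is therefore nonzero in $\C[[z,\bar z]]$. Applying this to $h\defeq \Ga-g\circ\Ga_0\circ g\ii$ forces $\Ga = g\circ\Ga_0\circ g\ii$, and so $(L,\Ga)$ belongs to the orbit of $(L_0,\Ga_0)$.

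Combining the two inclusions gives the description of the set of admissible pairs as the orbit $\{(g\circ L_0,\, g\circ\Ga_0\circ g\ii)\mid g\in\gG\}$. Projecting on the first factor yields the description of the admissible formal series as $\{g\circ L_0\mid g\in\gG\}=[L_0]=\cF_F$, which is the second assertion. The only substantive ingredient beyond direct verification is Proposition~\ref{uniqueL}; everything else reduces to manipulations of formal composition, so no real obstacle is anticipated.
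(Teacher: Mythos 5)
Your proof is correct and follows essentially the same route as the paper: the orbit inclusion by direct computation, the reverse inclusion via Lemma~\ref{propequivstatements} and the uniqueness of the invariant foliation (Proposition~\ref{uniqueL}), and then the identification of~$\Ga$. The only minor difference is that where the paper cites Lemma~\ref{lemLdeterminesGa} for the uniqueness of~$\Ga$ given~$L$, you prove it directly by observing that right-composition with~$L$ is injective (the leading term $h_{n_0}(z\bar z)^{n_0}$ of $h\circ L$ cannot vanish), which is a valid and self-contained substitute.
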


  \begin{proof}
    Putting together Proposition~\ref{uniqueL} and
    Lemma~\ref{propequivstatements}, we see that the unique
    $F$-invariant formal foliation is $\cF_F=[L_0]$ and the set of
    $F$-admissible formal series is the orbit of~$L_0$ under~$\gG$.
   
    Now, if $L=g\circ L_0$ with $g\in\gG$, it follows directly from the
  equation $L_0\circ F=\Gamma_0\circ L_0$ that $(g\circ L_0,
  g\circ \Gamma_0\circ g^{-1})$ is $F$-admissible and, according
  to Lemma~\ref{lemLdeterminesGa}, it is the only $F$-admissible pair whose
  first component is~$L$.
\end{proof}

Finally, given an $F$-admissible~$L$, the following lemma, which
describes the unique~$\Gamma$ such that the pair $(L,\Gamma)$ is
admissible, shows in particular its analyticity when~$L$ is analytic:
\begin{lemma} \label{lemLdeterminesGa}
  Let $L\in\gL$. Then there exists a unique $\ell\in\gG$ such that the
  complex extension $L(z,w)\in\C[[z,w]]$ (as defined
  in~\eqref{eqfCtwotoC})\footnote{No
    confusion should be made with the ephemerous notation $L(x,y)$
    used only in the proof of Lemma~\ref{Phi}.} satisfies 
\begin{equation}
  L(z,z) = \big( \ell(z) \big)^2.
\end{equation}

Suppose furthermore that $L$ is admissible for $F\in\Fom$. Then there is only one $\Ga \in
\gG$ such that $(L,\Ga)$ is an $F$-admissible pair, and it is explicitely
determined by the equation
\begin{equation}   \label{eqdeterminGa}
  \Ga(z^2) = L\circ \hat F\big( \ell\ii(z),\ell\ii(z) \big),
\end{equation}
where~$\hat F$ is the complex extension of~$F$ as in~\eqref{notahatF}.
\smallskip

Moreover, if $F\in\Fomcv$ and~$L$ is convergent, then~$\ell$ and~$\Ga$ are
  convergent.
\end{lemma}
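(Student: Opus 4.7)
The plan is to peel off the two claims in order: first construct and pin down~$\ell$ from the diagonal restriction of the bivariate extension of~$L$, then read off~$\Ga$ from the admissibility equation by setting $w=z$ and inverting~$\ell$. Throughout, $L$ is manipulated through its complex extension $L(z,w)=zw+\sum_{r+s\ge3}L_{rs}z^r w^s\in\C[[z,w]]$.

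For the first claim, I would restrict this extension to the diagonal $w=z$, obtaining
\[
L(z,z)=z^2+\sum_{n\ge 3} a_n z^n,\qquad a_n\defeq\sum_{r+s=n}L_{rs}.
\]
The reality constraint $L_{rs}=\ov{L_{sr}}$ built into the definition~\eqref{eqdefgL} of~$\gL$ forces each $a_n$ to be real: pair $L_{rs}$ with $L_{sr}$ when $r\ne s$, and use $L_{rr}\in\R$ when $r=s$. Writing $L(z,z)=z^2 h(z)$ with $h(z)=1+O(z)\in\R[[z]]$, the standard substitution into the expansion of $\sqrt{1+T}$ gives a unique $\sqrt{h}\in 1+z\R[[z]]$, and $\ell(z)\defeq z\sqrt{h(z)}$ then belongs to~$\gG$ and satisfies $\ell(z)^2=L(z,z)$. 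Uniqueness of~$\ell$ within~$\gG$ is immediate, since the two formal square roots of $L(z,z)$ differ by a sign and only one is tangent to the identity.

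For the second claim, I would pass the admissibility equation $L\circ F=\Ga\circ L$ to complex extensions, yielding an identity $L\circ\hat F(z,w)=\Ga\bigl(L(z,w)\bigr)$ in $\C[[z,w]]$. Specialising to $w=z$ gives $L\circ\hat F(z,z)=\Ga\bigl(\ell(z)^2\bigr)$, and substituting $z\mapsto \ell\ii(z)$ produces exactly formula~\eqref{eqdeterminGa}. Uniqueness of such a~$\Ga$ follows from the injectivity of the map $g\in\gG\mapsto g\circ L\in\C[[z,\bar z]]$: since $L$ has valuation~$2$ with nonzero quadratic part $z\bar z$, the coefficients of any $g\in\gG$ can be recovered order-by-order from those of $g\circ L$.

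For the analyticity statement, if~$L$ is convergent then $L(z,z)$ and $h(z)$ are convergent power series in one variable, and a convergent series of the form $1+O(z)$ has a convergent square root, so~$\ell$ is convergent; its inverse $\ell\ii$ is then also convergent by the analytic inverse function theorem. Assuming further $F\in\Fomcv$, the complex extension~$\hat F$ is holomorphic on the bidisk, so formula~\eqref{eqdeterminGa} exhibits $\Ga(z^2)$ as a composition of convergent power series, whence $\Ga$ itself is convergent (only even powers of~$z$ appear, so the substitution $z^2=R$ preserves convergence). The argument is essentially an unfolding of definitions; the only mildly delicate point — and the one I expect to require the most care — is the reality check that ensures $\ell\in\gG$ and not merely $\ell\in 1+z\C[[z]]$.
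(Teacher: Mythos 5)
Your proof is correct and follows essentially the same route as the paper's: diagonal restriction of the complex extension, reality of the coefficients via $L_{rs}=\ov{L_{sr}}$, formal square root via $\sqrt{1+T}$, and specialization of $L\circ\hat F=\Ga\circ L$ to $w=z$ followed by substitution of $\ell\ii$. The only (inessential) variation is that you derive uniqueness of~$\Ga$ from the injectivity of $g\mapsto g\circ L$, whereas the paper reads it off directly from the fact that equation~\eqref{eqdeterminGa} prescribes $\Ga\circ S$ with $S(z)=z^2$; both arguments are immediate.
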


\begin{proof}
Considering an arbitrary $L\in \gL$ as in~\eqref{eqdefgL} and
  composing its complex extension by
  \begin{equation}   \label{eqdefiota}
    \iota \col \C \to \C^2, \qquad \iota(z) \defeq (z,z),
  \end{equation}
 we see that
  \begin{equation}   \label{eqLcirciota}
    L\circ\iota(z) = z^2 + \sum_{n\ge3} \bigg( \sum_{r+s=n} L_{rs}
    \bigg) z^n
  \end{equation}
  has real coefficients and is of the form $L\circ\iota(z) = z^2\big(
  1 + O(z) \Big) \in \R[[z]]$.
  Substituting $z^{-2}L\circ\iota(z)-1$ into $\sqrt{1+T} = 1 + \frac12
  T + \cdots \in\R[[T]]$ and multiplying the result by~$z$, we get
  $\ell(z)\in\R[[z]]$ of the form $\ell(z)=z\big(1+O(z)\big)$ such
  that $L\circ\iota = \ell^2$, as desired, and it is clearly unique
  (and convergent if~$L$ is convergent).  
  
  We thus have
  \begin{equation}   \label{eqdefSz}
    L\circ\iota = S\circ \ell
    \qquad \text{with} \quad S(z)\defeq z^2.
  \end{equation}
  We now suppose that $L$ is $F$-admissible, \ie that there is a
  $\Ga\in\gG$ such that $L\circ F = \Ga \circ L$.
  For the complex extensions of~$L$ and~$F$, this yields (see also~\eqref{conj1})
  \begin{equation}   \label{eqLhatFGaLzw}
    L\circ \hat F(z,w) = \Ga\circ L(z,w),
  \end{equation}
  whence $L\circ\hat F\circ\iota = \Ga \circ L\circ\iota = \Ga \circ S
  \circ\ell$, or
  \[
    \Ga \circ S = L\circ\hat F\circ\iota\circ\ell\ii,
  \]
  which is exactly~\eqref{eqdeterminGa}.
  The right-hand side is a univariate formal series that is guaranteed
  to be even, of the form $z^2+O(z^4)$, with real coefficients, because we assumed that
  there is a solution $\Ga\in\gG$; now we see that this solution is
  unique. Moreover, if~$F$ and~$L$ are convergent, then the right-hand side is
  convergent, and so is~$\Ga$.
  \end{proof}

In particular, as announced in Section~\ref{secdecoupl}, the conjugacy
class of~$\Ga$ is uniquely determined and Definition~\ref{defformcons} makes
sense.
%
%
  %
%
Note that, if $F$ is ``formally conservative'' as in
Definition~\ref{defformcons}, then
\[
  \text{$L$ is $F$-admissible}
  \enspace\Longleftrightarrow\enspace
  L\circ F = L
  \]
  for any $L\in\gL$,
  \ie $F$ is formally integrable and the $F$-admissible series are
  first integrals of it.
%
%
Here are two instances of this property:

\begin{proposition}\label{integ} Let $F\in\Fom$.
  \medskip
  
  \noindent (i) If $F$ is a complex holomorphic map,
%
  then $F$ is formally conservative.
  \medskip
  
  \noindent (ii) If $F$ is area-preserving,
%
%
then $F$ is formally conservative.
\end{proposition}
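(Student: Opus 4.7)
The plan in both cases is to exhibit an admissible pair of the form $(L,\mathrm{Id})$: by the uniqueness clause of Lemma~\ref{lemLdeterminesGa} this gives an $F$-admissible $L$ whose associated $\Gamma$ is the identity, and since the conjugacy class of $\Gamma$ in $\gG$ is intrinsic to $F$, Definition~\ref{defformcons} yields that $F$ is formally conservative. In both cases, the admissible $L$ will be produced as $L=\nu\circ\Phi=|\Phi|^2$ for a specific formal tangent-to-identity normalization $\Phi$ whose associated normal form preserves the modulus. The bulk of the work reduces to invoking a classical formal normalization theorem and then performing the one-line modulus computation.

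For (i), since $F(z)=\lambda z+O(z^2)$ is holomorphic and $\lambda$ is not a root of unity, the classical formal Siegel linearization produces a formal tangent-to-identity holomorphic series $\Phi(z)=z+\sum_{k\ge2}\phi_k z^k\in\C[[z]]$ with $\Phi\circ F=\lambda\,\Phi$. The series $L\defeq\nu\circ\Phi=\Phi\,\overline{\Phi}$ lies in $\gL$ (its coefficient $L_{jk}=\phi_j\overline{\phi_k}$ obviously satisfies $L_{jk}=\overline{L_{kj}}$), and
\[
L\circ F=(\Phi\circ F)\,\overline{(\Phi\circ F)}=(\lambda\Phi)(\bar\lambda\,\overline{\Phi})=|\lambda|^2\,L=L,
\]
so $(L,\mathrm{Id})$ is $F$-admissible and (i) follows.

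For (ii), the classical formal Birkhoff normal form theorem for area-preserving maps with irrational rotation number (only $\om\notin\Q$ is required at the formal level) furnishes a formal tangent-to-identity, area-preserving diffeomorphism $\Phi$ such that $\Phi\circ F\circ\Phi^{-1}=N$, where $N(\ze)=\lambda\ze\,e^{2\pi i\,n(|\ze|^2)}$ with $n(R)\in R\,\R[[R]]$ real (the modulus factor $1+f(|\ze|^2)$ that would occur in the general form~\eqref{eqpolarN} is forced to vanish by area preservation, since $|N(\ze)|=|\ze|$). Setting again $L\defeq\nu\circ\Phi=|\Phi|^2\in\gL$, we get
\[
L\circ F=\nu\circ N\circ\Phi=|\Phi|^2\cdot\bigl|e^{2\pi i\,n(|\Phi|^2)}\bigr|^2=|\Phi|^2=L,
\]
so once more $(L,\mathrm{Id})$ is $F$-admissible and (ii) follows.

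No real obstacle arises: the only nontrivial inputs are the two classical formal normalization theorems, and in both of them the resulting normal form preserves each circle $\{|\ze|=\text{const}\}$, which is exactly the condition $\Ga=\mathrm{Id}$ once the normalization is translated back through~\eqref{eqcharacterisG}. The conceptual point worth highlighting (but requiring no computation) is that formal conservativity of $F$ is detected by \emph{some} admissible $L$ having $\Ga=\mathrm{Id}$, not by all of them simultaneously — but this is exactly what Lemma~\ref{lemLdeterminesGa} permits.
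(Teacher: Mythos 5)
Your proposal is correct and follows essentially the same route as the paper: in case (i) the formal Siegel linearization $\Phi\circ F=\lambda\Phi$ gives $|\Phi|^2\circ F=|\lambda|^2|\Phi|^2=|\Phi|^2$, and in case (ii) the area-preserving Birkhoff normalization yields a normal form with $|N(\zeta)|=|\zeta|$, so that $(|\Phi|^2,\ID)$ is $F$-admissible in both cases. The only difference is cosmetic: you make explicit the appeal to Lemma~\ref{lemLdeterminesGa} and to the intrinsic nature of the conjugacy class of $\Gamma$, which the paper leaves implicit.
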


\begin{proof}
  %

  $(1)$ If $F(z)=\lambda z+\sum_{j\geq 2}f_j \, z^j$, then there exists a
  (unique) formal series of the form 
  $\phi(z)=z+\sum_{j\geq 2} \phi_jz^j$ that linearizes~$F$, \ie
  $\phi\circ F(z)=\lambda \,\phi(z)$. 
  This~$\phi$ can be viewed as a particular kind of tangent-to-identity
  diffeomorphism
  and the rotation $z\mapsto \la z$ as a particular kind of normal
  form,   
  we thus see that
  $(|\phi|^2,\ID)$ is an $F$-admissible pair
  (see Appendix~\ref{appaltpf} \eqref{eqFholom}--\eqref{eqnotahF}).
  \smallskip

  $(2)$ If $F$ is area-preserving, then there exists an area-preserving
  normalization~$\Phi$ that takes~$F$ to an 
  area-preserving normal form~$N$ (the Birkhoff normal
  form). Since~$N$ preserves the foliation $[\nu]$ by circles, one
  must have $|N(z)|=|z|$, hence
  $(|\Phi|^2,\ID)$ is an $F$-admissible pair.
%
\end{proof}

\section{Selecting a geometric normalization}   \label{secselect}

Our aim is to show that, for a generic $F\in\Fomcv$, the formal
invariant foliation is divergent: all admissible~$L$ and thus all
formal geometric normalizations are divergent as claimed in Theorem~\ref{main}.

The main difficulty encountered when trying to prove generic
divergence of geometric normalizations is the non-uniqueness of
admissible series.
%
%
We thus need a selection process so as to single out a particular
solution, but in such a way
that the divergence of the selected $F$-admissible series implies the
divergence of all other $F$-admissible series; then the Ilyashenko -
P\'erez-Marco alternative (see Section~\ref{general}) will reduce the
question to the construction of one example where divergence occurs.

We describe two such processes: the first one, analogous to the
one which had been used in \cite{CSSW} under the name of \emph{basic}
normalization, is very natural but does not suffice 
here and
must be supplemented 
by the second one, more sophisticated.

\subsection{Selection by the resonant part}\label{respart}

\begin{definition}\label{res}
  Given $L(z)=|z|^2+\sum_{r+s\ge 3}L_{rs}{z^r\bar z^s}\in\gL$,
  the series
  %
%
  \begla
  \rho_L(R) \defeq\sum_{n\ge 2}L_{nn}R^n\in R^2\R[[R]]
  \edla
  is called the \emph{resonant part} of $L$.
\end{definition}

\begin{lemma}\label{couple}
  Given $F\in\Fom$, 
  the map
  \[
    (L,\Gamma) \in \{\text{$F$-admissible pairs}\} \mapsto \rho_L \in
      R^2\R[[R]]
    \]
    is bijective.
%
  %
    In other words, for any $\rho(R)\in\ R^2\R[[R]]$, there exists a
    unique $(L,\Gamma)\in\gL\times\gG$ satisfying
  \begin{equation}   \label{eqGaLwithrho}
    \Gamma\circ L=L\circ F\quad\text{and}\quad \rho_L=\rho. 
  \end{equation}
\end{lemma}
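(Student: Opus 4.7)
The plan is to reduce bijectivity to a triangular infinite system over the group $\gG$, using the orbit description of $F$-admissible pairs.

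The existence of at least one $F$-admissible pair $(L_0,\Gamma_0)$ is guaranteed: since $\omega$ is irrational, $F$ admits a formal Birkhoff normalization $\Phi_0$, which is in particular a formal geometric normalization, so $L_0 \defeq \nu\circ\Phi_0$ is $F$-admissible by Lemma~\ref{exist}, and the associated $\Gamma_0$ is produced by Lemma~\ref{lemLdeterminesGa}. By Corollary~\ref{setF}, the set of all $F$-admissible pairs coincides with the $\gG$-orbit
$$\{(g\circ L_0,\; g\circ\Gamma_0\circ g^{-1}) \mid g\in\gG\},$$
and Lemma~\ref{lemLdeterminesGa} says that for each admissible $L$ the second entry $\Gamma$ is uniquely determined by $L$. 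Thus it suffices to show that the map
$$\Theta\col g\in\gG \;\longmapsto\; \rho_{g\circ L_0}\;\in\; R^2\R[[R]]$$
is a bijection.

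To analyse $\Theta$, I would write $g(R)=R+\sum_{k\ge2}g_kR^k$ with $g_k\in\R$ and expand
$$g\circ L_0 = L_0 + \sum_{k\ge 2} g_k L_0^k.$$
Since $L_0=|z|^2+O(|z|^3)$, we have $L_0^k=|z|^{2k}+O(|z|^{2k+1})$, so $L_0^k$ contributes to the $z^n\bar z^n$-coefficient only for $k\le n$, with leading contribution equal to $1$ when $k=n$. Extracting the $(n,n)$-coefficient gives, for every $n\ge 2$,
$$(\rho_{g\circ L_0})_n \;=\; g_n \,+\, (L_0)_{nn} \,+\, P_n\bigl(g_2,\dots,g_{n-1}\bigr),$$
where $P_n$ is a polynomial whose coefficients depend on only finitely many coefficients of $L_0$. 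Reality of the right-hand side, and hence of $g_n$, follows from the Hermitian symmetry $(L_0)_{rs}=\overline{(L_0)_{sr}}$ of elements of $\gL$, which pairs up complex-conjugate monomial contributions to the diagonal of each $L_0^k$.

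The resulting system is triangular in the unknown sequence $(g_n)_{n\ge 2}$, so for any prescribed $\rho(R)=\sum_{n\ge 2}\rho_n R^n\in R^2\R[[R]]$ the equations $\rho_n-(L_0)_{nn}-P_n(g_2,\dots,g_{n-1})=g_n$ determine the $g_n$ uniquely in $\R$ by induction on $n$. This proves both injectivity and surjectivity of $\Theta$, and hence of the map in the statement. The only non-automatic step is the verification of the triangular shape, namely that the coefficient of $g_n$ in $(g\circ L_0)_{nn}$ equals $1$ and that $g_k$ with $k>n$ does not contribute; both are immediate from the valuation estimate $L_0^k=|z|^{2k}+O(|z|^{2k+1})$, so no essential obstacle is anticipated.
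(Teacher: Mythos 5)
Your proof is correct, but it takes a genuinely different route from the paper's. The paper proves Lemma~\ref{couple} directly: it expands the conjugacy equation $L\circ F=\Ga\circ L$ coefficient by coefficient and obtains the triangular system~\eqref{dominant}, in which the off-diagonal coefficients $L_{rs}$ ($r\neq s$) are forced by division by $1-\la^{r-s}$ (non-resonance), the diagonal coefficients $L_{nn}$ are the free parameters identified with the coefficients of~$\rho$, and the $\Ga_n$ come along for free; existence and uniqueness are established simultaneously with no prior input about admissible pairs. You instead take as given the existence of one admissible pair (via a formal normalization) together with the orbit description of Corollary~\ref{setF} and Lemma~\ref{lemLdeterminesGa}, and reduce the lemma to the bijectivity of $g\mapsto\rho_{g\circ L_0}$ on $\gG$ --- which is exactly the triangular computation the paper records separately in \eqref{resonant0}--\eqref{resonant1}. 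Within the paper's ordering this is not circular, since Corollary~\ref{setF} precedes Lemma~\ref{couple}; but note two costs. First, Corollary~\ref{setF} rests on Proposition~\ref{uniqueL}, whose Section~2 proof imports Lemma~\ref{cssw3} from \cite{CSSW}; the paper's direct proof of Lemma~\ref{couple} is precisely what later permits a self-contained alternative proof of that uniqueness (the Remark after Corollary~\ref{determined}), an independence your argument forgoes. Second, the explicit recursion \eqref{dominant} produced by the direct proof is reused quantitatively later (Lemma~\ref{lemrevisitgrs}, Propositions~\ref{degL} and~\ref{degLF}, and the proof of Theorem~\ref{main}(i')), so your shorter argument does not replace it. Two cosmetic points: the formal normalization you invoke for general non-area-preserving $F$ should not be called a Birkhoff normalization, and the admissibility of $L_0=\nu\circ\Phi_0$ follows from the displayed equivalence before Definition~\ref{F-ad} (or Lemma~\ref{propequivstatements}) rather than from Lemma~\ref{exist}, which is the converse implication. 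Your reality check for $g_n$ and the valuation argument $L_0^k=|z|^{2k}+O(|z|^{2k+1})$ are both sound.
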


\begin{proof}
  Let us write 
\beglab{eqFFstLLstGGst}
    F=F^{(1)}+F_*, \qquad L=\nu+L_*, \qquad \Gamma=\ID+\Gamma_*
\edla
    with $F^{(1)}(z)\defeq\la z$ and
    \beglab{eqFstFjketc}
      F_*(z)=\sum_{j+k\ge 2}F_{jk}z^j\bar z^k, \qquad
      L_*(z)=\sum_{r+s\ge 3}L_{rs}z^r\bar z^s, \qquad
      \Ga_*(R)=\sum_{n\ge 2}\Ga_n R^n.
    \edla
    We consider the coefficients~$F_{jk}$ and those of~$\rho(R)$ as given constants, and we
    want to solve~\eqref{eqGaLwithrho} for the unknowns~$L_{rs}$ and~$\Ga_n$.
%
%
    The equation $\Gamma\circ L=L\circ F$ can be rewritten 
    \[
      L_*+\Gamma_*\circ(\nu+L_*)=\nu\circ
      F-\nu+L_*\circ(F^{(1)}+F_*),
    \]
    that is, applying the Taylor formula,
    \beglab{eqdefAlowBlow}
      L_*-L_*\circ F^{(1)} + \Ga_*\circ\nu = \nu\circ F-\nu
      \, + \, \sum_{a+b\ge 1} \frac{1}{a!b!} \bigl(
      (\pa^a_z\pa^b_{\bar z}L_*)\circ F^{(1)} \bigr) F_*^a
      \ov F_*^{\raisebox{-.5ex}{$\scriptstyle b$}}
      \, - \,
      \sum_{c\ge 1}\frac{1}{c!}(\Ga_*^{(c)}\circ\nu)L_*^c.
    \edla
Let us denote by~$A_{rs}$ the coefficient of $z^r\bar z^s$ in the
right-hand side of~\eqref{eqdefAlowBlow}.
%
%
We get
\begin{equation}\label{dominant}
 L \circ F = \Ga\circ L  \quad\Longleftrightarrow\quad
  \left\{ \begin{alignedat}{2}  
(1-\la^{r-s})L_{rs}&= 
A_{rs} 
\quad && \text{when $r\neq s$, $r+s\ge3$,} \\[1ex]
\Ga_n&= 
A_{nn} 
\quad && \text{when $r=s=n\ge2$.}
\end{alignedat} \right. 
\end{equation}
We observe that $A_{rs}$ is a polynomial expression
in certain coefficients of~$\Ga$ and~$L$, namely 
%
  %
\beglab{eqArspolyn}
\text{$A_{rs}$ is a polynomial in $\big(L_{r's'}\big)_{r'+s'<r+s}$ and
  $\big(\Ga_{n'}\big)_{2n'<r+s}$.}
\edla
Indeed, the sum over~$c$ contains 
$\Ga_{n'}L_{r'_1s'_1}\cdots L_{r'_c s'_c}$ in the coefficient of
$z^r\bar z^s$ only if
$n'-c + \sum r'_i=r$ and $n'-c+\sum s'_i=s$,
which implies \[2n' + \sum(r'_i+s'_i-3) = r+s-c<r+s\]
with $n'\ge2$ and each $r'_i+s'_i\ge3$,
whence $2n'<r+s$ ,
and the sum over $(a,b)$ contains $L_{r's'}$ in the coefficient of
$z^r\bar z^s$ only if
$r'-a + j=r$ and $s'-b+k=s$ where $z^j\bar z^k$ stems from
$F_*^a \ov F_*^{\raisebox{-.5ex}{$\scriptstyle b$}}$,
which implies \[ r'+s'= r+s +a+b-(j+k) < r+s\] because $j+k\ge 2(a+b)$
(see Lemma~\ref{lemrevisitgrs} below 
for a more detailed computation, but the above
argument is enough for now).

%
Since $\la^m\neq1$ for any $m\in\Z^*$, the upshot is
\[
  \text{\eqref{eqGaLwithrho}} \enspace\Longleftrightarrow\enspace
  \left\{ \begin{alignedat}{2}  
& L_{rs} = \frac{A_{rs}}{1-\la^{r-s}} 
\quad && \text{when $r\neq s$, $r+s\ge3$} \\[1ex]
\text{$\Ga_n= A_{nn}$}\enspace \text{and} & \enspace \text{$L_{nn} = $ coefficient of~$R^n$ in~$\rho(R)$}
\quad && \text{when $r=s=n\ge2$}
\end{alignedat} \right.
\]
and~\eqref{eqArspolyn} shows that our system of equations can be solved by induction
on $r+s$ and uniquely determines~$L$ and~$\Gamma$.
\end{proof}

\begin{corollary}    \label{determined}
  Given $F\in\Fom$,  the map
  \begin{equation}   \label{eqbijLrhoL}
    L \in \{\text{$F$-admissible formal series}\} \mapsto \rho_L \in
    R^2\R[[R]]
  \end{equation}
  %
  %
  is bijective.
 \end{corollary}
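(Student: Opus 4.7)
The plan is to deduce this corollary almost immediately from the two preceding results: Lemma~\ref{couple}, which gives a bijection between $F$-admissible \emph{pairs} $(L,\Ga)$ and resonant parts $\rho\in R^2\R[[R]]$, and Lemma~\ref{lemLdeterminesGa}, which says that $L$ alone determines the companion~$\Ga$ when $L$ is $F$-admissible. In essence the corollary is the statement that once $\Ga$ is redundant one can identify the set of $F$-admissible pairs with the set of $F$-admissible series.

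More concretely, I would first make the map in~\eqref{eqbijLrhoL} well-defined by observing that $\rho_L$ is read off directly from the coefficients of~$L$, so the map is defined on every admissible~$L$. For \textbf{surjectivity}, given $\rho\in R^2\R[[R]]$, Lemma~\ref{couple} produces an $F$-admissible pair $(L,\Ga)$ with $\rho_L=\rho$; in particular $L$ is $F$-admissible (by Definition~\ref{F-ad}) and maps to~$\rho$.

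For \textbf{injectivity}, suppose $L_1$ and $L_2$ are $F$-admissible with $\rho_{L_1}=\rho_{L_2}$. By Lemma~\ref{lemLdeterminesGa} there exist unique $\Ga_1,\Ga_2\in\gG$ such that $(L_i,\Ga_i)$ is an $F$-admissible pair for $i=1,2$. These two admissible pairs have the same resonant part, hence by the injectivity half of Lemma~\ref{couple} we get $(L_1,\Ga_1)=(L_2,\Ga_2)$, and in particular $L_1=L_2$.

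There is no real obstacle here: the content has already been established. The only thing to check is that the projection $(L,\Ga)\mapsto L$ from $F$-admissible pairs to $F$-admissible series is bijective, which is exactly what Lemma~\ref{lemLdeterminesGa} provides (together with the tautological fact that an $F$-admissible $L$ participates in \emph{some} admissible pair by definition). Composing this bijection with the bijection of Lemma~\ref{couple} yields the corollary.
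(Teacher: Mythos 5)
Your proof is correct and follows essentially the same route as the paper, which also deduces the corollary directly from the bijection of Lemma~\ref{couple} together with the observation that $\rho_L$ is read off from~$L$. The only cosmetic difference is that your injectivity step invokes Lemma~\ref{lemLdeterminesGa} for the uniqueness of the companion~$\Ga$, which is not actually needed: mere existence of some $\Ga_i$ with $(L_i,\Ga_i)$ admissible already lets the injectivity half of Lemma~\ref{couple} force $(L_1,\Ga_1)=(L_2,\Ga_2)$.
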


 \begin{proof}
  By Lemma~\ref{couple} the pair $(L,\Gamma)$ and thus the series~$L$ are uniquely determined by~$\rho_L$;
%
%
conversely, $\rho_L$ is uniquely determined by $L$.
\end{proof}

\begin{remark} \textbf{Alternative proof of the uniqueness of the formal
    $F$-invariant foliation:}
Given
\[
  g(R)=R+\sum_{c\geq 2}g_cR^c\in\gG
  \quad\text{and}\quad
  L(z)=\sum_{r+s\geq 2}L_{rs}z^r\bar{z}^{s}\in \gL,
  \]
one can expand $L'\defeq g\circ L = L + \sum_{c\ge2} g_c L^c =
\sum_{r+s\geq 2}L'_{rs}z^r\bar{z}^{s}$ and extract its resonant
part $\rho_{L'}(R)=\sum_{n\geq 2}L_{nn}' R^n$: for each $n\ge2$,
\begin{align}   \label{resonant0}
  L_{nn}' &= L_{nn} +  \sum_{c\ge2} \,
  \sum_{r_1+\cdots +r_c=s_1+\cdots+s_c=n \atop r_i+s_i\ge 2}
  g_c \,L_{r_1s_1}\cdots L_{r_ns_n}\\
  \label{resonant1}
  &= L_{nn} + g_n+ \sum_{c=2}^{n-1} \,
  \sum_{r_1+\cdots +r_c=s_1+\cdots+s_c=n \atop r_i+s_i\ge 2}
  g_c \,L_{r_1s_1}\cdots L_{r_ns_n}
\end{align}
because~\eqref{resonant0} had a summation with $2c \le \sum(r_i+s_i) =2n$, whence
$c\le n$, and the only possibility with $c=n$ was $r_i+s_i=2$ for
each~$i$, whence $(r_i,s_i)=(1,1)$ and $L_{r_i s_i}=1$ in that case.
%

We can thus provide an alternative self-contained proof of the
uniqueness of the invariant formal foliation obtained in
Corollary~\ref{uniqueL} (without resorting to Lemma~3 and Corollary~1
of~\cite{CSSW} as we did in Section~\ref{secforminvfol}):

Suppose $L_1$ and $L_2$ are $F$-admissible.
Let~$L_1$ play the role of~$L$ and~$L_2$ that of~$L'$
in~\eqref{resonant1} and solve for the $g_n$'s:
we thus obtain $g\in\gG$ such that $\rho_{g\circ L_1}$ and~$\rho_{L_2}$
coincide.
Now, $g\circ L_1$ and~$L_2$ being two $F$-admissible series with
the same resonant part, Corollary~\ref{determined} says that they must
coincide.
\end{remark}

\begin{definition}   \label{defresfreeLFst}
  We define the ``resonant-free $F$-admissible series'' to be the
  unique $F$-admissible series~$L_F^*$ whose resonant part is~$0$.
%
%
\end{definition}

%
Corollary~\ref{setF} implies an interesting property of the inverse of
the bijection~\eqref{eqbijLrhoL} described in Corollary~\ref{determined}:
\begin{proposition}
  The inverse $\rho\mapsto L$ of
the bijection~\eqref{eqbijLrhoL} 
  %
  preserves affine combinations.
\end{proposition}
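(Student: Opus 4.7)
\bigskip

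\noindent\textbf{Proof plan.} The plan is to reduce the affine preservation to two elementary observations: (a) the map $L\mapsto\rho_L$ is $\R$-linear on $\gL$; (b) affine combinations of elements of $\gG$ with coefficients summing to $1$ are again in $\gG$, and the action $g\mapsto g\circ L_0$ (for a fixed $L_0\in\gL$) is compatible with such affine combinations.

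First I would fix an $F$-admissible pair $(L_0,\Gamma_0)$ and apply Corollary~\ref{setF}: every $F$-admissible series is of the form $g\circ L_0$ for a unique $g\in\gG$. Given resonant parts $\rho_1,\ldots,\rho_k\in R^2\R[[R]]$ and real scalars $\alpha_1,\ldots,\alpha_k$ with $\sum\alpha_i=1$, let $L_i$ be the unique $F$-admissible series with $\rho_{L_i}=\rho_i$ and write $L_i=g_i\circ L_0$ with $g_i\in\gG$. Since $g_i(R)=R+\sum_{c\ge 2}(g_i)_c R^c$ with real coefficients, the series
\[
g(R)\defeq\sum_{i=1}^k\alpha_i\,g_i(R)=R+\sum_{c\ge 2}\Bigl(\sum_{i=1}^k\alpha_i\,(g_i)_c\Bigr)R^c
\]
has linear coefficient $\sum\alpha_i=1$ and real higher coefficients, so $g\in\gG$.

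Next I would check that $g\circ L_0=\sum_i\alpha_i(g_i\circ L_0)$. Indeed, expanding $g_i\circ L_0=L_0+\sum_{c\ge 2}(g_i)_c\,L_0^c$ and summing with weights~$\alpha_i$ gives
\[
\sum_i\alpha_i(g_i\circ L_0)=\Bigl(\sum_i\alpha_i\Bigr)L_0+\sum_{c\ge 2}\Bigl(\sum_i\alpha_i(g_i)_c\Bigr)L_0^c=L_0+\sum_{c\ge 2}g_c\,L_0^c=g\circ L_0,
\]
using $\sum\alpha_i=1$ for the leading term. Hence $\sum_i\alpha_i L_i=g\circ L_0$ belongs to $\gL$ and is $F$-admissible by Corollary~\ref{setF}.

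Finally, extracting the resonant part is an $\R$-linear operation on $\gL$, so $\rho_{\sum_i\alpha_i L_i}=\sum_i\alpha_i\,\rho_{L_i}=\sum_i\alpha_i\rho_i$. By the uniqueness part of Corollary~\ref{determined}, $\sum_i\alpha_i L_i$ is the unique $F$-admissible series whose resonant part equals $\sum_i\alpha_i\rho_i$, which is exactly the claim that the inverse bijection $\rho\mapsto L$ preserves affine combinations. There is no real obstacle here; the only point requiring attention is remembering that affine (not linear) combinations are the right notion, because $\gL$ has a fixed leading term $z\bar z$ and $\gG$ a fixed linear part $R$.
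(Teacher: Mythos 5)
Your proof is correct and follows essentially the same route as the paper's: write the admissible series as $g_i\circ L_0$ via Corollary~\ref{setF}, observe that an affine combination of elements of $\gG$ is again in $\gG$ and that $\sum_i\alpha_i(g_i\circ L_0)=\bigl(\sum_i\alpha_i g_i\bigr)\circ L_0$ when $\sum_i\alpha_i=1$, then conclude by the linearity of $L\mapsto\rho_L$ and the uniqueness in Corollary~\ref{determined}. The only differences are cosmetic: you treat general $k$-term affine combinations where the paper treats the binary case $tL_0+(1-t)L_1$, and you spell out the substitution identity that the paper leaves implicit.
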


\begin{proof}
Given $\rho_0,\rho_1 \in R^2\R[[R]]$ and $t\in\R$, we call $L_0$
and~$L_1$ their images by the inverse of~\eqref{eqbijLrhoL}.
Corollary~\ref{setF} yields $g\in\gG$ such that $L_1=g\circ L_0$.
Now, the affine combination $g_t=t\ID+(1-t)g$ also belongs to~$\gG$, this implies that the affine combination 
\[ L_t \defeq tL_0+(1-t)L_1=\bigl[t\ID+(1-t)g)\bigr]\circ L_0=g_t\circ
  L_0 \]
%
%
is $F$-admissible. Since $\rho_{L_t} = t\rho_0+(1-t)\rho_1$,
we see that $L_t$ is the image of $ t\rho_0+(1-t)\rho_1$ by the
  inverse of the bijection~\eqref{eqbijLrhoL}, as desired.
%
%
%
\end{proof}
\medskip

With Lemma~\ref{couple} we have seen how to find a unique $F$-admissible
series with prescribed resonant part~$\rho$.
As a variant, one may prescribe the even part of the univariate series
$L\circ\iota(z)$ that we already used in~\eqref{eqLcirciota}
(recall that $L\circ\iota(z)=L(z,z)$, where $L(z,w)$ is the complex
extension of~$L(z)$ as in~\eqref{eqfCtwotoC}).
We thus introduce the notation
\begin{multline}
  L\in\gL \mapsto \chi_L\in\gG \enspace \text{determined by} \\
  \chi_L(z^2)\defeq\frac{1}{2}\bigl(L(z,z)+L(-z,-z)\bigr)
   = z^2 + \sum_{m\,\text{even}\ge4} \bigg( \sum_{r+s=m} L_{rs}
   \bigg) z^m.
   \label{eqdefsigL}
\end{multline}
%
%
%
\begin{lemma}\label{even}
  Given $F\in \Fom$ and $\rho(R)=R+\sum_{n\ge 2}\rho_nR^n\in\gG$,
  there exists a unique $F$-admissible~$L$
  %
  such that $\chi_L=\rho$.
\end{lemma}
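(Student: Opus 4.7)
The plan is to reduce the problem to a triangular recursion on the coefficients of an unknown $g\in\gG$. Indeed, Corollary~\ref{determined} shows that the set of $F$-admissible series is nonempty (taking $\rho=0$ we obtain for instance the resonant-free $F$-admissible series $L_F^*$ of Definition~\ref{defresfreeLFst}). Fixing any $F$-admissible $L_0\in\gL$, Corollary~\ref{setF} tells us that every $F$-admissible series is uniquely of the form $g\circ L_0$ with $g\in\gG$; it thus suffices to prove that the map $\Psi\col g\in\gG\mapsto \chi_{g\circ L_0}\in\gG$ is a bijection.

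To analyse $\Psi$, set $P(z)\defeq L_0(z,z) = L_0\circ\iota(z)$, which by~\eqref{eqLcirciota} is a real univariate formal series of the form $P(z)=z^2+\sum_{m\ge3}p_m z^m$ with $p_m\in\R$. For $g(R)=R+\sum_{c\ge2} g_c R^c\in\gG$ one has
\[
(g\circ L_0)\circ\iota(z) \;=\; g\bigl(P(z)\bigr) \;=\; P(z)+\sum_{c\ge2}g_c P(z)^c,
\]
and, by definition~\eqref{eqdefsigL}, $\chi_{g\circ L_0}(z^2)$ is the part of this series containing only the even powers of~$z$. The key observation is that $P(z)^c = z^{2c}+O(z^{2c+1})$, so $g_c$ contributes only to the coefficients of $z^m$ with $m\ge 2c$, and the coefficient of $z^{2n}$ in $g(P(z))$ has the form
\[
g_n \;+\; Q_n(g_2,\dots,g_{n-1}) \qquad (n\ge2),
\]
where $Q_n$ is a polynomial depending only on $P$ (i.e.\ on~$L_0$). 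Note that both $P$ and each $g_c$ being real ensures that $\chi_{g\circ L_0}$ really lies in $\gG$.

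Writing $\rho(R)=R+\sum_{n\ge2}\rho_n R^n$, the equation $\chi_{g\circ L_0}=\rho$ then amounts to the triangular system $g_n=\rho_n-Q_n(g_2,\dots,g_{n-1})$ for $n\ge2$, which admits a unique solution determined recursively. The corresponding $L\defeq g\circ L_0$ is the unique $F$-admissible series satisfying $\chi_L=\rho$. The only real point to check---essentially a bookkeeping matter---is that $g_n$ appears in the coefficient of $z^{2n}$ of $g(P(z))$ with coefficient exactly~$1$ (which guarantees the invertibility of the triangular system); this is immediate from $P(z)^n=z^{2n}+O(z^{2n+1})$, after which the argument proceeds mechanically.
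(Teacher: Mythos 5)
Your proof is correct, but it takes a different route from the paper's. The paper proves Lemma~\ref{even} by running the same coefficient-by-coefficient induction on $r+s$ as in Lemma~\ref{couple}: for $r\neq s$ one sets $L_{rs}=A_{rs}/(1-\la^{r-s})$, and at each even level $r+s=2n$ one uses the remaining freedom in the resonant coefficient to impose $L_{nn}=\rho_n-\sum_{r+s=2n,\,r\neq s}L_{rs}$ (together with $\Ga_n=A_{nn}$); this is self-contained at the level of the conjugacy equation and produces the pair $(L,\Ga)$ in one pass. You instead take the structure of the solution set as given --- every admissible series is $g\circ L_0$ for a fixed admissible $L_0$, by Corollary~\ref{setF} --- and reduce the statement to the bijectivity of $g\mapsto\chi_{g\circ L_0}$, which you establish through the triangular system $g_n=\rho_n-Q_n(g_2,\dots,g_{n-1})$ coming from $P(z)^c=z^{2c}+O(z^{2c+1})$. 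This is exactly the mechanism the paper itself uses in the Remark after Corollary~\ref{determined} (there for the resonant part $\rho_L$ rather than for $\chi_L$), so your argument is very much in the spirit of the text. The trade-off: your proof leans on Corollary~\ref{setF}, hence ultimately on the uniqueness of the invariant foliation (proved in the paper via Lemma~\ref{cssw3}, imported from \cite{CSSW}), whereas the paper's proof of Lemma~\ref{even} is independent of that machinery; on the other hand your version cleanly separates the one-variable combinatorics of $g(P(z))$ and makes the injectivity transparent. All the ingredients you invoke (nonemptiness of the set of admissible series via Corollary~\ref{determined}, the orbit description, reality of $P$ and hence of $\chi_{g\circ L_0}$) are established before Lemma~\ref{even} in the paper, so there is no circularity.
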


\begin{proof}
In order to find an admissible pair $(L,\Ga)$ such that
  $\chi_L=\rho$, we proceed as in the proof of Lemma~\ref{couple}.
  With the notation~$A_{rs}$ for the coefficient of $z^r\bar z^s$ in the
right-hand side of~\eqref{eqdefAlowBlow},
  the unique solution to our
  problem is obtained by induction on $r+s\ge3$:
\[
  \left\{ \begin{alignedat}{2}  
& L_{rs} = \frac{A_{rs}}{1-\la^{r-s}} 
\quad && \text{when $r\neq s$, $r+s\ge3$} \\[1ex]
\text{$\Ga_n= A_{nn}$}\enspace \text{and} & \enspace
L_{nn} = \rho_n - \displaystyle\sum_{r+s=2n \atop r\neq s} L_{rs}
\quad && \text{when $r=s=n\ge2$.}
\end{alignedat} \right.
\]
%
%
%
%
\end{proof}



%

\subsection{Selection induced by the foliation involution}\label{realaxis}
The resonant-free $F$-admissible series might seem a natural choice for a``good'' geometric normalization
among the others,
%
%
but numerical simulations suggest that this choice
does not share the property of the \emph{basic} normalization in
\cite{CSSW}, namely its divergence does not seem to imply divergence
of all normalizations.

Instead, we shall consider the restriction of~$L$ to the real
  axis and use it to define a formal involution~$\tau$, which will then
  help us to define a new selection procedure.
If we use again the complex extension $L(z,w)$ of $L(z)$ as
in~\eqref{eqfCtwotoC}, restricting to~$z$ real amounts to
replacing~$w$ by~$z$ and considering $L(z,z)=L\circ\iota(z)$
as in~\eqref{eqdefiota}--\eqref{eqLcirciota}.
%

\begin{lemma}  \label{lemdefinvol}
(i) Given a formal foliation $\cF \in \gG\backslash \gL$, there is a
  unique formal involution of the form
  \begin{equation}   \label{eqformtauz}
    \tau(z)=-z+O(z^2) \in \R[[z]]
  \end{equation}
such that, for any representative~$L$ of~$\cF$,
  \begin{equation}   \label{eqLauztauz}
    L(\tau(z),\tau(z))=L(z,z).
  \end{equation}

 (ii) For each $L\in\gL$, equation~\eqref{eqLauztauz} determines a
    unique formal involution of the form~\eqref{eqformtauz}, that can be obtained as
  \begin{equation}   \label{eqtauelliisigell}
    \tau=\ell^{-1}\circ\sigma\circ\ell,
    \qquad \text{where}\quad \sig \defeq -\ID
  \end{equation}
and $\ell(z)\in\gG$ is the square root of $L(z,z)$ introduced in Lemma~\ref{lemLdeterminesGa}.
\end{lemma}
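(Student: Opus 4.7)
The plan is to establish (ii) first and then deduce (i) as a corollary, with the key device being the ``square-root linearization'' provided by Lemma~\ref{lemLdeterminesGa}.

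For (ii), I would invoke Lemma~\ref{lemLdeterminesGa} to factor $L(z,z)=\ell(z)^2$ with a unique $\ell\in\gG$ of the form $\ell(z)=z+O(z^2)$ and real coefficients. The equation $L(\tau(z),\tau(z))=L(z,z)$ then becomes $\ell(\tau(z))^2=\ell(z)^2$, so $\ell\circ\tau=\pm\ell$. Comparing lowest-order terms rules out the plus sign: since $\tau(z)=-z+O(z^2)$ one has $\ell(\tau(z))=-z+O(z^2)$ whereas $\ell(z)=z+O(z^2)$, so the only possibility is $\ell\circ\tau=\sigma\circ\ell$, yielding $\tau=\ell^{-1}\circ\sigma\circ\ell$ as the unique solution. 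This $\tau$ is automatically an involution because $\sigma^2=\ID$; it has real coefficients as a composition of real-coefficient series; and its linear part is $-z$, since $\ell^{-1}(w)=w+O(w^2)$ applied to $-\ell(z)=-z+O(z^2)$ gives $\tau(z)=-z+O(z^2)$.

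For (i), I would verify that $\tau$ depends only on the foliation class. Let $L_1,L_2$ be representatives with $L_2=g\circ L_1$, $g\in\gG$, and write $g(R)=R\bigl(1+\sum_{n\ge 2}g_nR^{n-1}\bigr)$ with $g_n\in\R$. Then
\[
\ell_2(z)^2 = L_2(z,z) = \ell_1(z)^2\,\Bigl(1+\sum_{n\ge 2}g_n\,\ell_1(z)^{2(n-1)}\Bigr).
\]
Taking the principal square root of the second factor produces a series $h(\ell_1(z)^2)$ with $h(T)\in\R[[T]]$ and $h(0)=1$, so $\ell_2(z)=\ell_1(z)\,h(\ell_1(z)^2)$. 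Setting $\tilde g(w)\defeq w\,h(w^2)$, we get $\ell_2=\tilde g\circ\ell_1$, and since $\tilde g$ contains only odd powers of~$w$ it commutes with $\sigma$: $\tilde g\circ\sigma=\sigma\circ\tilde g$. Consequently
\[
\ell_2^{-1}\circ\sigma\circ\ell_2 = \ell_1^{-1}\circ\tilde g^{-1}\circ\sigma\circ\tilde g\circ\ell_1 = \ell_1^{-1}\circ\sigma\circ\ell_1,
\]
so the involution built from $L_2$ coincides with that built from $L_1$.

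The main conceptual point, and the only nontrivial idea, is the linearization via~$\ell$: the equation $L(\tau(z),\tau(z))=L(z,z)$, which is \emph{a priori} highly nonlinear in~$\tau$, collapses to the trivial $\sigma$-invariance of $z^2$ once one extracts the square root $\ell$. The remaining work—the parity computation showing that different representatives of the foliation yield square roots differing by an odd formal diffeomorphism—is routine and essentially forced by the fact that the square root of an even quantity is determined up to sign.
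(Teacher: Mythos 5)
Your proof is correct and follows essentially the same route as the paper: part (ii) is proved identically, by reducing $L(\tau(z),\tau(z))=L(z,z)$ to $\ell\circ\tau=\pm\ell$ via the square root $\ell$ of Lemma~\ref{lemLdeterminesGa} and ruling out the $+$ sign from the linear term. For part (i) the paper is slightly more economical: since $g\in\gG$ is invertible, the equation $g\circ L\circ\iota\circ\tau=g\circ L\circ\iota$ is \emph{equivalent} to $L\circ\iota\circ\tau=L\circ\iota$ by left-composition with $g^{-1}$, so the two representatives determine the same involution without your explicit computation of $\ell_2=\tilde g\circ\ell_1$ and the parity argument (which is nevertheless valid).
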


We emphasize that~$\tau$ depends only on the foliation~$\cF$, not on
any particular representative~$L$ (the solution to~\eqref{eqLauztauz}
is invariant under the action
\eqref{eqleftaction}--\eqref{eqleftquotient} of~$\gG$ on~$\gL$).
%

\begin{proof}
 Geometrically, if~$L$ is convergent, the whole lemma is clear.
Indeed, equation~\eqref{eqLauztauz} determines a unique real-analytic
  function~$\tau$ of the form~\eqref{eqformtauz} and it must be an
  involution (for $z\in\R^*$ close enough to~$0$, the curve $L\ii\big(L(z)\big)$
  intersects the real line at two points,~$z$ and~$\tau(z)$, see the figure below);
the function~$\tau$ is clearly unchanged upon replacing~$L$ by $g\circ
L$ for a convergent $g\in\gG$.

\hskip-0.0cm
\includegraphics[scale=0.7]{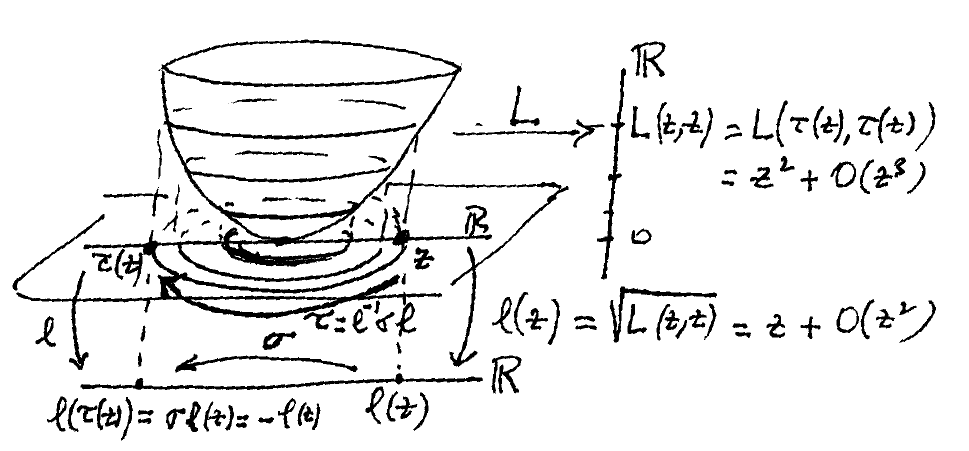}

\smallskip

 To prove the lemma in all generality, we start with the second statement.
  For a general formal series $L\in\gL$, we can rewrite~\eqref{eqLauztauz} as
\[
  L\circ\iota\circ\tau = L\circ\iota.
\]
 Since $L\circ\iota = \ell^2$, \eqref{eqLauztauz} $\,\Longleftrightarrow\,
\ell\circ\tau = \pm\ell$, which is equivalent to $\ell\circ\tau = -
\ell$ if we assume $\tau\neq\ID$.
This yields~\eqref{eqtauelliisigell} as desired because
$-\ell=\sig\circ\ell$.  

 The first statement follows because any representative of~$[L]$ is of
the form $g\circ L$ with $g\in \gG$, and \eqref{eqLauztauz} $\,\Longleftrightarrow\,
g\circ L\circ\iota\circ\tau = g\circ L\circ\iota$. 
\end{proof}
  
\begin{remark}\label{allconj}   
  We see from Lemma~\ref{lemdefinvol}(ii) that the formal
  involution~$\tau$ is formally conjugate to $\sigma=-\hbox{Id}$.
  That property is shared by all formal involutions in one variable,
  real or complex---see Appendix~\ref{appforminvol}.
%
%
\end{remark}


\smallskip

\begin{definition} Given $F\in\Fom$, the formal involution defined by
  Lemma~\ref{lemdefinvol} applied to~$\cF_F$ (the formal $F$-invariant
  foliation) is called the ``foliation involution'' of~$F$ and denoted
  by~$\tau_F$.
\end{definition}

\begin{proposition}\label{unique} Given $F\in\Fom$, there is a unique $L_F\in\gL$ admissible for $F$ such that 
$$L_F(z,z)=h(z),\quad\hbox{where}\quad h(z)\defeq-z\,\tau_F(z)=z^2+O(z^3).$$
\end{proposition}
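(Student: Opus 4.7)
The plan hinges on two earlier facts: (a) by Corollary~\ref{setF}, the set of $F$-admissible series forms a single $\gG$-orbit under the left action $L\mapsto g\circ L$; (b) by Lemma~\ref{lemLdeterminesGa} and Lemma~\ref{lemdefinvol}(ii), any $F$-admissible $L$ determines an $\ell\in\gG$ with $L\circ\iota(z)=\ell(z)^2$, and $\tau_F = \ell^{-1}\circ\sigma\circ\ell$, independently of the choice of $L$.

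\textbf{Uniqueness.} If $L_1,L_2$ are $F$-admissible with $L_i(z,z)=h(z)$, Corollary~\ref{setF} provides $g\in\gG$ with $L_2 = g\circ L_1$. Restriction to the diagonal gives $g\circ h = h$ in $\R[[z]]$. Writing $g(R)=R+\sum_{n\ge n_0}g_n R^n$ with $g_{n_0}\ne 0$ and using $h(z)=z^2+O(z^3)$, one gets $g(h(z))-h(z)=g_{n_0}z^{2n_0}+O(z^{2n_0+1})\ne 0$, a contradiction; hence $g=\ID$ and $L_1=L_2$.

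\textbf{Existence.} I start with any $F$-admissible $L_0$ (such exists because $F\in\Fom$ admits formal geometric normalizations $\Phi$, and $L_0\defeq\nu\circ\Phi$ is admissible by Lemma~\ref{propequivstatements}). Let $\ell_0\in\gG$ be the square root of $L_0(z,z)$ supplied by Lemma~\ref{lemLdeterminesGa}. Since $h(z)=z^2(1+O(z))\in\R[[z]]$, it admits a unique square root $\ell\in\gG$. I then look for $L_F = g\circ L_0$ with $g\in\gG$ satisfying $g(\ell_0(z)^2)=\ell(z)^2$. Substituting $y\defeq\ell_0(z)$ (compositionally invertible since $\ell_0\in\gG$), this amounts to finding $g\in\gG$ with $g(y^2)=\psi(y)$, where $\psi(y)\defeq\ell(\ell_0^{-1}(y))^2$.

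The crucial step, which I expect to be the only nontrivial point, is to verify that $\psi$ is an \emph{even} formal series in $y$: once this is done, $\psi(y)=y^2+O(y^4)$ determines a unique $g\in\gG$ with $g(y^2)=\psi(y)$, and $L_F\defeq g\circ L_0$ is the required series. Evenness will follow from two identities. First, $h\circ\tau_F = h$: a direct computation from $h(z)=-z\,\tau_F(z)$ combined with $\tau_F\circ\tau_F=\ID$ gives $h(\tau_F(z))=-\tau_F(z)\,\tau_F(\tau_F(z))=-\tau_F(z)\,z=h(z)$. Consequently $(\ell\circ\tau_F)^2=h\circ\tau_F=h=\ell^2$, which forces $\ell\circ\tau_F=\pm\ell$; inspection of linear terms ($\ell(z)=z+O(z^2)$, $\tau_F(z)=-z+O(z^2)$) picks the sign $-$, so $\ell\circ\tau_F=-\ell$. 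Second, from $\tau_F=\ell_0^{-1}\circ\sigma\circ\ell_0$ one gets $\ell_0^{-1}(-y)=\tau_F(\ell_0^{-1}(y))$. Combining these,
\[
\psi(-y)=\ell\bigl(\tau_F(\ell_0^{-1}(y))\bigr)^{2}=\bigl(-\ell(\ell_0^{-1}(y))\bigr)^{2}=\psi(y),
\]
as desired; this finishes the plan, and the rest is bookkeeping within $\gG$ and $\gL$.
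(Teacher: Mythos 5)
Your proposal is correct and follows essentially the same route as the paper: reduce to finding a unique $g\in\gG$ with $g\circ S\circ\ell_0=h$, and deduce the evenness of $h\circ\ell_0^{-1}$ from $h\circ\tau_F=h$ together with $\tau_F=\ell_0^{-1}\circ\sigma\circ\ell_0$. The only cosmetic difference is that you introduce the auxiliary square root $\ell$ of $h$ and argue $\ell\circ\tau_F=-\ell$, whereas the paper works directly with $h\circ\ell^{-1}=h\circ\ell^{-1}\circ\sigma$; the content is the same.
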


\begin{proof}
Let~$L$ be an arbitrary $F$-admissible series. According to
  Corollary~\ref{setF}, we just need to show the existence of a unique $g\in\gG$ such that
  $g\circ L(z,z)=h(z)$ (and then $L_F=g\circ L$).
  
Applying Lemma~\ref{lemLdeterminesGa} to~$L$ and using
notations~\eqref{eqdefiota} and~\eqref{eqdefSz}, we
have
$L \circ \iota = S \circ \ell$.
The equation $g\circ L\circ\iota = h$
is equivalent to $g\circ S = h\circ\ell\ii$, that is $g(z^2)=h\circ
\ell^{-1}(z)$.
Since $h(z)=z^2+O(z^3)$ and~$h$ and~$\ell$ have real coefficients, the existence and
uniqueness of a solution $g\in\gG$ is equivalent to the fact that
$h\circ \ell^{-1}$ is even, which can be checked as follows:
clearly \[ h = h\circ\tau_F \] because~$\tau_F$ is an involution,
hence $h\circ\ell^{-1} = h\circ\tau_F\circ\ell\ii = h\circ\ell\ii\circ\sig$
(using~\eqref{eqtauelliisigell} at the last step).
\end{proof}


\begin{definition} The $F$-admissible formal series~$L_F$ defined by
  Proposition~\ref{unique} is called the balanced admissible series of~$F$.
\end{definition}

The terminology ``balanced'' refers to the following fundamental property:

\begin{proposition} \label{LFbasic}
  Given an analytic $F\in\Fom$,
    if $L_F$ is divergent, then all $F$-admissible series are
    divergent
    and, consequently, the formal $F$-invariant foliation is divergent
    and all formal geometric normalizations of~$F$ are divergent.
\end{proposition}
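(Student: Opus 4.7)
The plan is to prove the contrapositive of the first implication: if any $F$-admissible series is convergent, then so is $L_F$. Once this is established, the remaining consequences follow almost immediately from the structural results of Section~\ref{secformfol}.

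First I would take an arbitrary $F$-admissible $L\in\gL$ and assume it is convergent. By Lemma~\ref{lemLdeterminesGa}, the univariate series $\ell\in\gG$ determined by $L\circ\iota=\ell^2$ is then convergent, and so is its formal inverse $\ell\ii$. By Lemma~\ref{lemdefinvol}(ii), the foliation involution is given by $\tau_F=\ell\ii\circ\sig\circ\ell$, which is therefore convergent as a composition of convergent series. It follows that $h(z)=-z\,\tau_F(z)$ is convergent.

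Next I would recover $L_F$ explicitly from $L$ by revisiting the proof of Proposition~\ref{unique}: we saw there that $L_F=g\circ L$ where $g\in\gG$ is determined by $g(z^2)=h\circ\ell\ii(z)$. Both $h$ and $\ell\ii$ being convergent, the composition $h\circ\ell\ii$ is convergent, hence so is $g$, and finally $L_F=g\circ L$ is convergent as the composition of two convergent series. This contradicts the hypothesis that $L_F$ is divergent, so no $F$-admissible series can be convergent.

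For the remaining consequences: the invariant formal foliation is $\cF_F=[L_F]$, and its representatives are exactly the $F$-admissible series (Corollary~\ref{setF}); since none of them converges, $\cF_F$ has no convergent representative and is, by Definition~\ref{deffol}, divergent. Finally, if $\Phi$ were a convergent formal geometric normalization of $F$, then $L=\nu\circ\Phi$ would be a convergent $F$-admissible series by Lemma~\ref{exist}, contradicting what we just established; hence every formal geometric normalization of $F$ is divergent.

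There is no real obstacle here: the proposition is essentially a formal consequence of the explicit description of $L_F$ via $\tau_F$ and $\ell$ given in Proposition~\ref{unique}, combined with the elementary fact that compositions (and inverses) of convergent formal series are convergent. The one step worth spelling out is why $\tau_F$ is convergent whenever some $L$ is, and this is handled by the formula $\tau_F=\ell\ii\circ\sig\circ\ell$; the rest is bookkeeping.
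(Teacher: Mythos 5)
Your proposal is correct and follows essentially the same route as the paper's own proof: both establish the contrapositive by showing that convergence of some admissible $L$ forces convergence of $\ell$ (Lemma~\ref{lemLdeterminesGa}), hence of $\tau_F$ and $h$, hence of the $g\in\gG$ with $g\circ S=h\circ\ell\ii$, hence of $L_F=g\circ L$. The concluding deductions about the foliation and the geometric normalizations are the same routine consequences of Corollary~\ref{setF} and Lemma~\ref{exist}.
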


\begin{proof}
  Suppose that $F\in\Fom$ is analytic and $L$ is $F$-admissible and
    convergent.
    We must prove that $L_F$ is convergent.
    We know that $L_F = g \circ L$ for some $g\in\gG$, we will show
    that~$g$ is convergent.  

Using notations~\eqref{eqdefiota} and~\eqref{eqdefSz},
  Lemma~\ref{lemLdeterminesGa} implies that~$\ell$ is convergent,
    hence so is~$\tau_F$ by~\eqref{eqtauelliisigell},
    and so is~$h$ in Proposition~\ref{unique}.
    Now, $h = L_F \circ \iota = g\circ L \circ \iota = g \circ S \circ
    \ell$, \ie
    $g\circ S = h\circ \ell\ii$.
    The latter formal series is convergent
    (because~$h$ and~$\ell$ are), 
    hence~$g$ is convergent.  
%
%
\end{proof}

In the proof of Proposition \ref{LFbasic} we noticed that~$\tau_F$ is
convergent as soon as there exists a convergent
$F$-admissible~$L$.
However, we should not expect the converse to hold.  The following
lemma indeed shows that~$\tau_F$ is convergent in a case for which we
will later show the generic divergence of all admissible series. 

\begin{lemma}\label{Fodd}
  If $F\in \Fom$ is odd, that is if $F(-z)=-F(z)$, then $L(-z)=L(z)$
  for any $F$-admissible $L$. In particular, one has $\tau_F(z)=-z$.
\end{lemma}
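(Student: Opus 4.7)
My plan is to introduce the ``reflected'' series $L'(z)\defeq L(-z)$ and show that it coincides with $L$ by using the selection by resonant part (Corollary~\ref{determined}), and then deduce the formula for $\tau_F$ from Lemma~\ref{lemdefinvol}.

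First I would verify that $L'$ is again $F$-admissible. Suppose $L$ satisfies $L\circ F=\Ga\circ L$ for some $\Ga\in\gG$. Then, using that $F$ is odd,
\[
L'\circ F(z)=L(-F(z))=L(F(-z))=\Ga\circ L(-z)=\Ga\circ L'(z),
\]
so $L'$ is $F$-admissible with the \emph{same} $\Ga$.

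Next I would compare resonant parts. Writing $L(z)=|z|^2+\sum_{r+s\ge3}L_{rs}z^r\bar z^s$, one has $L'(z)=|z|^2+\sum_{r+s\ge3}(-1)^{r+s}L_{rs}z^r\bar z^s$, so in particular the diagonal coefficients are preserved: $L'_{nn}=(-1)^{2n}L_{nn}=L_{nn}$, whence $\rho_{L'}=\rho_L$. By Corollary~\ref{determined}, the $F$-admissible series is uniquely determined by its resonant part, hence $L'=L$, i.e.\ $L(-z)=L(z)$. This gives the first assertion (and shows incidentally that $L_{rs}=0$ whenever $r+s$ is odd).

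For the second assertion, the evenness of the coefficients of $L$ transfers to the complex extension: since $L_{rs}=0$ for $r+s$ odd, one checks that $L(-z,-w)=L(z,w)\in\C[[z,w]]$, and in particular $L(-z,-z)=L(z,z)$. Thus the map $\tau(z)=-z$, which is of the form $-z+O(z^2)$, satisfies equation~\eqref{eqLauztauz}. By the uniqueness statement in Lemma~\ref{lemdefinvol}, $\tau_F(z)=-z$.

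No step here is really an obstacle---the only subtlety is recognizing that the mere equality of invariant foliations $[L']=[L]$ furnished by Proposition~\ref{uniqueL} would be too weak (it yields $L'=g\circ L$ for some $g\in\gG$, but not $g=\ID$), so one really needs the finer uniqueness provided by the resonant-part bijection of Corollary~\ref{determined}.
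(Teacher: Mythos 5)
Your proof is correct and follows essentially the same route as the paper: show that $L\circ\sigma$ (where $\sigma=-\ID$) is $F$-admissible with the same $\Gamma$, then conclude $L\circ\sigma=L$ from a uniqueness-by-selection result, and finally read off $\tau_F=-\ID$ from Lemma~\ref{lemdefinvol}. The only (immaterial) difference is that you invoke the resonant-part bijection of Corollary~\ref{determined}, whereas the paper uses the even-part selection $\chi_L$ of Lemma~\ref{even}; both invariants are visibly unchanged under $z\mapsto -z$, so either works.
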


\begin{proof}
  %
  %
  Suppose that $L\in\gL\subset\C[[z,\bar z]]$ is $F$-admissible.
  We thus have an $F$-admissible pair $(L,\Ga)$.
  The relation $\Ga\circ L = L\circ F$ implies
  $\Ga\circ L\circ\sig = L\circ F\circ\sig = L\circ\sig\circ F$,
  \ie $(L\circ\sig,\Ga)$ is $F$-admissible too.
  Since $\chi_{L\circ\sig}=\chi_L$ (notation~\eqref{eqdefsigL}),
  Lemma~\ref{even} shows that $L\circ\sig=L$. 
\end{proof}


\section{Genericity of the divergence of geometric
  normaliza\-tions}   \label{secgendiv}




\subsection{Two classical examples}   \label{secClassEx}


Consider an area-preserving map or a complex holomorphic map.
  In view of Proposition~\ref{integ},
  %
%
if the map is not integrable (\ie if there exist no non-constant
analytic first integral), 
then the unique formal invariant foliation is not analytic and the
balanced admissible series must be divergent.
This is what happens generically. Indeed:

\smallskip

\textbf{(1)} If $F$ is area-preserving, the existence of Birkhoff
zones of instability prevents the existence of an invariant foliation
because orbits exist which are asymptotic to both boundaries of the
zone.
%
%
C.~Genecand (\cite{G}), extending a result of E.~Zehnder (\cite{Z}),
%
proved the genericity of non-integrability for local area-preserving
diffeomorphisms with arbitrarily prescribed $(N-1)$-jet with a very fine
topology:
%
given an analytic area-preserving $F_0\in\Fom$ and an integer $N\ge4$,
consider the set $S(F_0,N)$ of all local analytic area-preserving
maps with the same $(N-1)$-jet as~$F_0$; these maps can be parametrised by
their generating functions as follows:
\beglab{eqdefgenu}
S(F_0,N) = \big\{\,  F_0\circ \Phi^u \mid
u(x,y') = \sum_{k+\ell>N} u_{k\ell} x^k y'^\ell \in\R\{x,y'\} \,\big\},
\edla
where $\Phi^u\col z=x+i y \mapsto z'=x'+i y'$ is the local analytic area-preserving
map of generating function~$u$, \ie implicitly defined by
\beglab{eqdefugen}
x' = x + \frac{\pa u}{\pa y'}(x,y'), \qquad
y = y' + \frac{\pa u}{\pa x}(x,y')
\edla
(note that the $(N-1)$-jet of~$\Phi^u$ is the identity);
an open neighborhood of $F_0\circ \Phi^u$ 
%
is by definition any set of the form $\{ F_0\circ\Phi^v \mid
k+\ell>N \imp |v_{k\ell}-u_{k\ell}|<\epsilon_{k\ell}\}$ for
some double sequence $(\epsilon_{k\ell})_{k+\ell>N}$ of positive reals.
%

\begin{theorem} [\cite{G}]\label{area}
If $F_0$ has non-trivial Birkhoff normal form at order~$N-1$, then a
generic $F\in S(F_0,N)$ has non-degenerate homoclinic orbits and non-degenerate Mather
sets in every neighborhood of the origin. 
In particular, this implies local non-integrability.
  %
  %
\end{theorem}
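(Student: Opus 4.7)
The plan is to follow the classical Melnikov--Zehnder--Genecand scheme, reducing the problem to showing that suitably chosen perturbations of the generating function produce transverse homoclinic intersections and non-trivial Aubry--Mather sets near infinitely many resonant invariant circles accumulating at the origin.

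First I would exploit the non-triviality of the Birkhoff normal form of~$F_0$ at order $N-1$ to produce a sequence of approximate invariant circles at radii $r_n\to 0$ with rational rotation numbers $p_n/q_n$, $q_n\to\infty$. Because the first nontrivial Birkhoff coefficient of~$F_0$ is nonzero, the twist condition holds, the rotation number varies monotonically with the radius, and hits each relevant $p/q$ transversely. For the Birkhoff-truncated integrable model, each such circle is foliated by periodic orbits of the $q_n$-th iterate; applying a further normalization near each resonance one obtains, up to a remainder that is exponentially small in~$q_n$, a pendulum-like model for $F^{q_n}$ whose hyperbolic fixed points are connected by coincident stable and unstable separatrices. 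All of this depends only on the fixed $(N-1)$-jet of~$F_0$, so the construction is uniform over $S(F_0,N)$.

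Next I would compute, for $F=F_0\circ\Phi^u\in S(F_0,N)$ in the parametrization \eqref{eqdefgenu}--\eqref{eqdefugen}, a Melnikov-type splitting function $M_n(u)$ measuring the distance between the perturbed stable and unstable manifolds at the $n$-th resonance. The quantity $M_n(u)$ is a real-analytic functional of~$u$ whose Fourier expansion along the unperturbed separatrix reveals a nonzero linear dependence on specific monomial coefficients $u_{k\ell}$ with $k+\ell>N$. Hence $\{M_n\neq 0\}$ is open and dense in $S(F_0,N)$ in the Genecand topology defined by the sequences $(\epsilon_{k\ell})$, and the countable intersection $\bigcap_n\{M_n\neq 0\}$ is a dense $G_\delta$ on which the homoclinic orbits at each chosen resonance are transverse (hence non-degenerate) and the variational principle attached to $\Phi^u$ produces non-degenerate Aubry--Mather cantori of rotation number~$p_n/q_n$ in every neighborhood of~$0$. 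Transverse homoclinic points together with non-trivial Mather sets in every neighborhood of~$0$ preclude the existence of a non-constant analytic first integral, giving local non-integrability.

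The main obstacle is the quantitative control of $M_n(u)$: the splitting is exponentially small in~$1/q_n$, so one has to verify that it remains genuinely sensitive to the high-order coefficients~$u_{k\ell}$ and that one can select, for each~$n$, a \emph{finite} list of monomials which suffices to control~$M_n$, so that the density of $\{M_n\neq0\}$ survives in the product topology defined by arbitrary tolerance sequences $(\epsilon_{k\ell})$. Genecand's construction handles this by choosing, for each resonance, explicit monomial perturbations that excite the dominant Fourier mode of the separatrix action and by carefully tracking the remainder estimates in the normalization near the resonance; it is exactly this bookkeeping, rather than any conceptual step, that constitutes the technical heart of the argument.
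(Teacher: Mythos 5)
This statement is quoted from Genecand's paper \cite{G}; the present paper does not reprove it, but its structure is visible in the authors' adaptation to odd maps (Theorem~\ref{area2}), and that structure is \emph{not} the one you propose. Genecand's argument is variational, in the spirit of Aubry--Mather theory: one first uses the twist produced by the non-trivial Birkhoff coefficient to obtain \emph{minimal} $p/q$-periodic orbits in thin annuli accumulating at the origin; one then perturbs the generating function $u$ by a nonnegative polynomial vanishing to first but not second order along such an orbit, which makes it the unique minimal $p/q$-orbit and hyperbolic; the ordering of the (now isolated) translates of the minimal periodic state forces the existence of asymptotic minimal states between neighbors, i.e.\ homoclinic orbits, with no splitting computation whatsoever; finally, if these are not transverse, a further perturbation supported near a single point of the homoclinic orbit (approximated by polynomials via Stone--Weierstrass) makes them transverse. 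Hyperbolicity and transversality being open, and all perturbations being achievable with arbitrarily small coefficients, density in the fine product topology follows.

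Your route through a Melnikov splitting functional $M_n(u)$ contains a genuine gap exactly where you flag "the main obstacle": near a $p_n/q_n$-resonance the separatrix splitting is exponentially small in $q_n$, while the error of the pendulum-like normal form is only controlled to finite or exponentially small order as well, so the first-order (Melnikov) term cannot be asserted to dominate without delicate, resonance-by-resonance estimates that are not known to hold in this generality --- this is the classical exponentially small splitting problem, and it is precisely what the variational approach is designed to bypass. In particular, your claim that $M_n(u)$ has "a nonzero linear dependence on specific monomial coefficients $u_{k\ell}$" is unsubstantiated: even if the formal linearization of $M_n$ in $u_{k\ell}$ is nonzero, you would still need it to beat the remainder of the resonant normal form, and attributing such a Fourier-mode computation to Genecand misdescribes his proof. (A minor further point: Mather sets of rational rotation number are not "cantori"; the non-degenerate Mather sets in the statement are obtained from the minimality and hyperbolicity argument, not from a splitting estimate.) To make your scheme rigorous you would have to either carry out uniform exponentially small splitting asymptotics, or replace the Melnikov step by the minimality argument --- at which point you have reproduced Genecand's proof.
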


In the course of the proof of Theorem~\ref{main}, we will require

\begin{corollary}\label{CorGenecand}
  Let $N\ge1$. 
  Given a polynomial
  $J(z)=\la z+\sum_{2\le j+k\le N}J_{jk}z^j\bar z^k$ that is the
  $N$-jet of a formal area-preserving map, for any $\De>0$ there
  exists $F\in\FomJcv$ area-preserving and non-integrable.
\end{corollary}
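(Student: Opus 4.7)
The plan is to reduce the corollary to an application of Theorem~\ref{area}. I aim to exhibit an analytic area-preserving $F_0\in\FomJcv$ with $N$-jet equal to~$J$ and non-trivial Birkhoff normal form at some finite order; Genecand's theorem then delivers non-integrable area-preserving maps sharing with $F_0$ a high-order jet, hence in particular its $N$-jet~$J$.

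First, I would construct $F_0$. Since, by hypothesis, $J$ is the $N$-jet of at least one formal area-preserving map, the standard bijection between formal generating functions~$u(x,y')$ (without constant or linear term) and formal tangent-to-identity area-preserving maps~$\Phi^u$ defined by~\eqref{eqdefugen} produces a polynomial~$u_J(x,y')$ such that $F_0\defeq R_\om\circ\Phi^{u_J}$ has $N$-jet exactly~$J$, where $R_\om$ denotes the rotation $z\mapsto\la z$. Being polynomial, $u_J$ yields an $F_0$ holomorphic on all of $\C^2$, in particular on~$B_\De$.

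Second, I would ensure that $F_0$ has non-trivial Birkhoff normal form at some finite order. If the $F_0$ just constructed already satisfies this, nothing more is needed; otherwise it is formally linearizable, and I would replace $u_J$ by $u_J+\eps\,u'$, where $u'(x,y')$ is a suitably chosen monomial of degree strictly larger than that of~$u_J$ and $\eps>0$ is small. The $N$-jet of the corresponding map is unchanged, while the added tail produces new contributions to the higher-order Birkhoff coefficients; since each coefficient $c_m$ with $2m+1>N$ is unconstrained by~$J$ and depends, at leading order, linearly on the resonant part of the generating-function tail, one can choose $u'$ and an index~$M>N$ so that the first non-vanishing Birkhoff coefficient occurs at order~$M$. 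Keeping $\eps$ small preserves holomorphy of $\Phi^{u_J+\eps u'}$ on~$B_\De$.

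Finally, I would apply Theorem~\ref{area} to this modified $F_0$ with integer parameter $N'\defeq\max(N,M)+1$. The theorem yields a dense $G_\de$ subset of $S(F_0,N')$ consisting of area-preserving, locally non-integrable analytic maps; each such map shares the $(N'-1)$-jet of $F_0$, hence in particular the $N$-jet~$J$. Restricting to generating-function perturbations $v$ in~\eqref{eqdefgenu} with coefficients small enough that $\sum|v_{k\ell}|\De^{k+\ell}<\infty$ ensures that the resulting $F$ lies in~$\FomJcv$, yielding the desired example. The principal obstacle of the plan is the formally linearizable case in Step~2: one must show that some area-preserving analytic extension of~$J$ has non-trivial Birkhoff normal form, which rests on the explicit and non-degenerate dependence of the Birkhoff coefficients on the higher-order Taylor coefficients of the generating function.
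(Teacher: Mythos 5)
Your overall architecture (extend the jet so that the Birkhoff normal form becomes non-trivial at some finite order, invoke Genecand's Theorem~\ref{area}, then control the size of the generating-function perturbation) is the same as the paper's, but there is a genuine gap in your first step. The claim that, $u_J$ being polynomial, ``$F_0\defeq R_\om\circ\Phi^{u_J}$ is holomorphic on all of $\C^2$'' is false: $\Phi^{u}$ is only defined \emph{implicitly} by~\eqref{eqdefugen}, since one must solve $y=y'+\frac{\pa u}{\pa x}(x,y')$ for $y'$, and for a polynomial $u$ of degree $\ge3$ this is possible only on a neighbourhood of the origin whose size is dictated by the (possibly large) coefficients of $u_J$, i.e.\ by~$J$. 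The corollary requires an element of $\FomJcv$ for an \emph{arbitrary} prescribed $\De$, and a rescaling $z\mapsto cz$ destroys the prescribed jet, so your $F_0$ need not belong to $\FomJcv$ --- and then neither does $F_0\circ\Phi^v$, however small $v$ is. This is precisely the point where the paper must invoke the polynomial jet-completion theorem of \cite{LPPW} (Proposition~\ref{propAreaP}): it furnishes a genuinely \emph{polynomial}, hence entire, area-preserving $F_0$ with the prescribed extended jet, so that the only object whose domain of holomorphy must be controlled is the small perturbation $\Phi^u$, which is holomorphic on $B_\De$ once $|u_{k\ell}|<\eps/k!\ell!$ with $\eps$ small, by a contraction argument. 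That completion theorem is a substantive input, not a consequence of the formal generating-function correspondence.

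A secondary, repairable imprecision: in your last step the condition $\sum|v_{k\ell}|\De^{k+\ell}<\infty$ only makes $v$ convergent on a polydisc of radius $\De$; to guarantee that $\Phi^v$ itself is holomorphic on $B_\De$ the coefficients must in addition be \emph{small}, so that the implicit equation for $y'$ is solved by a contraction there, as in the paper. Your treatment of the non-trivial Birkhoff normal form (perturbing the generating function by a higher-order term chosen to hit a resonant coefficient) is sound and essentially matches the paper's device of changing the top-degree homogeneous part of the extended jet $J_+$.
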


\begin{proof}
  By hypothesis, $J(z)$ is the $N$-jet of an area-preserving formal
  map of~$\Fom$, for which we can assume that the $(N+1)$-jet~$J_+(z)$
  is such that the Birkhoff normal form at order~$N+1$ is non-trivial
  (by changing the top-degree homogeneous part of~$J_+$ if necessary).
Now, let~$F_0$ be a polynomial area-preserving map of~$\Fom$ that has $(N+1)$-jet~$J_+$;
%
such~$F_0$ always exists by \cite[Theorem~1]{LPPW}---see
Proposition~\ref{propAreaP} in Appendix~\ref{appAreaP} for the details.
  
  Applying Theorem~\ref{area} with $M\defeq \max\{N+2,4\}$, for any $\eps>0$ we can find a non-integrable
  area-preserving map of the form $F_0\circ\Phi^u \in S(F_0,M)$ such that the
  coefficients of~$u$ satisfy $|u_{k\ell}| < \eps/k!\ell!$.
  We will have $F_0\circ\Phi^u\in\FomJcv$ if the complex
  extension of~$\Phi^u$ is holomorphic in the polydisc~$B_\De$,
  %
  and this is what happens for~$\eps$ small enough, as one can check
  by applying the contraction principle to solve the second equation
  in~\eqref{eqdefugen} to find~$y'$ in terms
  of $(x+iy,x-iy)\in B_\De$.
\end{proof}


\medskip

\textbf{(2)} 
If $F:(\C,0)\to(\C,0)$ is complex holomorphic, the equivalence
between Lyapunov stability and analytic linearizability
(see \cite[Chap.~III \S~25]{SM} or \cite{PM3}) shows that no analytic invariant foliation (that is no
analytic geometric normalization) may exist when $F$ is not
analytically linearizable.
(See Appendix~\ref{appaltpf} for an alternative proof of that statement.)
%
%
Non-linearizable holomorphic maps have been exhibited by J.-C. Yoccoz
\cite{Y} and L.~Geyer \cite{Ge}:
\begin{theorem}[\cite{Y}]\label{holo} 
The quadratic holomorphic map $F(z)=e^{2\pi i\omega}z(1-z)$ is not
analytically linearizable if and only if $\omega$ is non-Bruno, \ie
$\om$ satisfies~\eqref{eqdefnonB}.
\end{theorem}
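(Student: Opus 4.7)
The proof splits into two directions of very different depth.

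\textbf{Easy direction (Bruno implies linearizable):} This is classical, due to Siegel in the Diophantine case and Bruno in the general arithmetic case. The plan is to construct the formal conjugacy $\varphi(z) = z + \sum_{n\ge 2} \varphi_n z^n$ characterized by $\varphi\circ F = \la\varphi$. Comparing coefficients yields a recursion of the shape $(\la^n-\la)\varphi_n = P_n(\varphi_1,\dots,\varphi_{n-1})$, where $P_n$ is a fixed polynomial determined by $F$ (for the quadratic $F$ only one nonzero nonlinear coefficient of $F$ enters). Iterating the recursion majorizes $|\varphi_n|$ by a sum over "trees" of products of small divisors $|\la^{k_i}-1|^{-1}$. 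The finiteness of the Bruno sum (\ie the negation of~\eqref{eqdefnonB}) is precisely what is needed to dominate this by a geometric series, proving that $\varphi$ has positive radius of convergence.

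\textbf{Hard direction (non-Bruno implies non-linearizable):} This is Yoccoz's celebrated theorem. Let $r(\om)$ be the conformal radius of the Siegel disk of $F$, with the convention $r(\om)=0$ when no analytic linearization exists; the goal is to show $r(\om)=0$ under~\eqref{eqdefnonB}. The plan is to establish the quantitative estimate
\[
  \bigl|\log r(\om) + B(\om)\bigr| \le C,
\]
where $B(\om) = \sum_{k\ge 0}(\log q_{k+1})/q_k$ is the Bruno function associated with the continued-fraction expansion $(p_k/q_k)$ of $\om$. The upper bound on $B(\om)$ is essentially the quantitative Bruno theorem from the first direction. The lower bound $B(\om)\le -\log r(\om)+C$ is the hard one, and it immediately forces $r(\om)=0$ whenever $B(\om)=+\infty$.

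The heart of the argument is Yoccoz's geometric renormalization along the continued-fraction expansion. To each convergent $p_k/q_k$ one associates a "commuting pair" obtained by cutting along a suitable arc in a fundamental domain for $F^{q_k}$; the passage from one step to the next mirrors the Gauss map acting on the tail of the continued fraction. Using the Koebe distortion theorem and careful control of the geometry of the domains, one shows that each renormalization step consumes a definite amount of conformal modulus of size comparable to $(\log q_{k+1})/q_k$, while the total available modulus is bounded above by $-\log r(\om) + O(1)$. Summing over $k$ yields the desired lower bound.

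The main obstacle is the step-by-step quantitative control of the renormalization. The critical technical input is a lower bound, at each step, on the modulus of certain annuli constructed from the dynamics of $F^{q_k}$ near $0$ and from its (unique) critical orbit; this exploits the special structure of the quadratic polynomial, namely that there is a single critical point whose orbit must accumulate on the boundary of any would-be Siegel disk. Without this structural simplicity the analogous lower bound is not available, which is why the statement is sharp precisely for this one-parameter family; in the general holomorphic setting the necessity of the Bruno condition is an additional deep result (P\'erez-Marco), and non-Bruno holomorphic germs not of the form above can still fail to be linearizable.
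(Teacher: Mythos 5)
This statement is not proved in the paper at all: it is imported verbatim from Yoccoz's monograph \cite{Y} and used purely as a black box, in the proof of part~(iii) of Theorem~\ref{main}, to supply an explicit non-linearizable germ. So there is no in-paper argument to compare yours against; the only fair comparison is with Yoccoz's original proof.

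Measured against that, what you have written is a survey-level outline rather than a proof. You correctly separate the two directions, correctly attribute the easy one to Siegel--Bruno, and correctly identify the central quantitative statement $\bigl|\log r(\om)+B(\om)\bigr|\le C$ with $B(\om)=\sum(\log q_{k+1})/q_k$, from which non-linearizability for non-Bruno $\om$ follows at once. But every substantive step is asserted, not established: the majorant/tree estimate that makes the Bruno sum dominate the small divisors, the construction of the sector (commuting-pair) renormalization, the Koebe-type distortion control, and the modulus bookkeeping are all named without being carried out, and each of these is a genuinely hard piece of analysis occupying a substantial part of \cite{Y}. Moreover, your description of the hard direction is not quite faithful to the actual mechanism: the renormalization-and-modulus scheme you describe is essentially the engine of the \emph{lower} bound $\log r(\om)\ge -B(\om)-C$ (i.e.\ the Bruno direction), whereas Yoccoz's proof that non-Bruno implies non-linearizable for the quadratic family rests on different ingredients --- control of the conformal radius $r(\al)$ for $\al$ in the upper half-plane together with a limiting argument, and the construction of ``small cycles'' (periodic orbits accumulating at the origin, born by parabolic explosion from the convergents $p_k/q_k$), which are incompatible with the existence of a Siegel disk. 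Your remark that the single critical orbit must accumulate on the boundary of any Siegel disk is true but is not the lever that makes the lower bound on $-\log r(\om)$ work. In short: the strategy is recognizably in the right circle of ideas, but as a proof it has gaps at every step, and the key mechanism for the direction that actually matters here is misidentified; for the purposes of this paper the correct move is exactly what the authors do, namely cite \cite{Y}.
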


%
%
%

\begin{theorem}[\cite{Ge}]\label{Ge}
 The holomorphic map $P_{\omega,d}(z)=e^{2\pi i\omega}z(1+z/d)^d$, $d\geq
  2$, is not analytically linearizable if and only if $\omega$ is
  non-Bruno.  
\end{theorem}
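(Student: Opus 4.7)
The statement is an equivalence between analytic non-linearizability and the arithmetic non-Bruno condition on $\omega$. The plan is to handle the two directions separately and, in the hard direction, reduce the problem to a quantitative estimate on the conformal radius of the Siegel disk, following Yoccoz and Geyer.

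The easy direction is that the Bruno condition $\sum \log q_{k+1}/q_k < \infty$ implies analytic linearizability: this is the classical Siegel-Bruno linearization theorem, which applies to any analytic germ $\lambda z + O(z^2)$ with $\lambda = e^{2\pi i\omega}$, and in particular to $P_{\omega,d}$.

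The substantive direction is that, if $\omega$ fails the Bruno condition, then $P_{\omega,d}$ is not analytically linearizable. I would argue by contraposition: assume $P_{\omega,d}$ admits an analytic linearization and show that $\omega$ must be Bruno. Under this hypothesis, the maximal Siegel disk $\Delta_\omega$ around $0$ is a well-defined open topological disk on which $P_{\omega,d}$ is conformally conjugate to the rotation $z \mapsto e^{2\pi i\omega} z$, and its conformal radius $r(\omega) > 0$ is a well-defined invariant of the germ. The polynomial $P_{\omega,d}$ has critical points at $z = -d/(d+1)$ (simple) and at $z = -d$ (of multiplicity $d-1$). By the Fatou-Douady theorem, the boundary $\partial\Delta_\omega$ is contained in the post-critical set, so the forward orbit of at least one critical point accumulates on $\partial\Delta_\omega$; in particular, $\Delta_\omega$ is bounded in the dynamical plane.

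The key quantitative step, adapting Yoccoz's renormalization argument \cite{Y} from the quadratic family $z(1-z)$ to the family $P_{\omega,d}$, is to establish an estimate of the form
\[
\log r(\omega) \le - \sum_{k \ge 0} \frac{\log q_{k+1}}{q_k} + C_d,
\]
where $C_d$ depends only on $d$. For each level $k$ of the continued fraction expansion of $\omega$, one constructs an annular neighborhood of $\partial\Delta_\omega$ on which the near-resonant iterate $P_{\omega,d}^{q_k}$ admits a polynomial-like renormalization whose conformal modulus is controlled by $q_{k+1}$; stacking the successive logarithmic losses $\log q_{k+1}/q_k$ along the continued fraction expansion yields the global bound. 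The main obstacle is precisely this renormalization step and its adaptation to the critical point of multiplicity $d-1$ at $z = -d$: Yoccoz's original quadratic argument exploits a unique simple critical point, whereas Geyer must control the extra ramification at $-d$, which alters the local degree of $P_{\omega,d}^{q_k}$ and therefore the Koebe-type distortion estimates used to propagate modulus losses through the renormalization. Once the inequality above is established, the non-Bruno hypothesis forces $r(\omega) = 0$, contradicting the assumed linearizability and completing the contraposition.
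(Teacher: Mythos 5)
First, note that the paper offers no proof of this statement: Theorem~\ref{Ge} is quoted verbatim from Geyer's article \cite{Ge} and used as a black box (only Corollary~\ref{CorGe} is derived from it, by a ramified-covering argument). So there is no internal proof to compare against; your proposal must be judged on its own as an attempt to reprove Geyer's theorem.

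As such, it has a genuine gap. The easy direction (Bruno implies linearizable, by Siegel--Bruno) is fine. But in the hard direction everything is loaded onto the asserted inequality $\log r(\omega)\le -\sum_k \log q_{k+1}/q_k + C_d$, which you do not prove; establishing it \emph{is} the theorem, so the proposal names the result rather than proving it. Moreover the route you sketch is not the one that actually works in the literature: even for the quadratic family, Yoccoz's non-linearizability proof for non-Bruno $\omega$ proceeds by a geometric construction of orbits accumulating at the fixed point, not by an upper bound on the conformal radius (the sharp upper bound $\log r(\omega)\le -B(\omega)+C$ for quadratics is a later, harder result of Buff--Ch\'eritat), and Geyer's proof for $P_{\omega,d}$ transfers Yoccoz's quadratic theorem by a polynomial-like restriction/surgery argument combined with the topological invariance of linearizability and of the rotation number, rather than by redoing the renormalization estimates for higher degree. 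Finally, your identification of the ``main obstacle'' is off: the critical point $z=-d$ of multiplicity $d-1$ satisfies $P_{\omega,d}(-d)=0$, so it is a preimage of the fixed point and its forward orbit is trivial; the only dynamically relevant critical orbit is that of the simple critical point $-d/(d+1)$, which is precisely why this family behaves like a unicritical one and was chosen by Geyer in the first place. So the step you flag as the hard adaptation is a non-issue, while the step you treat as routine (the quantitative control of the return maps, or the transfer to the quadratic family) is where the actual content lies.
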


\begin{corollary}\label{CorGe}
 The holomorphic map $F(z)=e^{2\pi i\omega}z(1+z^d)$, $d\geq
  2$, is not analytically linearizable if and only if $\omega$ is
  non-Bruno.  
\end{corollary}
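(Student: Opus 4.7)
The plan is to exploit the $\mathbb{Z}/d$-symmetry $F(\zeta z)=\zeta F(z)$ (valid for every $d$-th root of unity $\zeta$) to reduce the linearizability problem for $F$ to the one already solved by Theorem~\ref{Ge}. The easy direction is immediate: if $\omega$ is Bruno, then the classical Bruno linearization theorem applied to the germ $F$ (whose linear part is $\lambda=e^{2\pi i\omega}$) yields an analytic linearization.

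For the main direction, I would first establish that $F$ is analytically linearizable if and only if $G(w):=e^{2\pi id\omega}\,w(1+w)^d$ is. The branched cover $\pi(z):=z^d$ semi-conjugates $F$ to $G$, since a direct computation gives $\pi\circ F(z)=\lambda^d z^d(1+z^d)^d=G\circ\pi(z)$. Suppose $h(z)=z+O(z^2)$ is an analytic linearization of $F$. Then $z\mapsto\zeta^{-1}h(\zeta z)$ also linearizes $F$, so by uniqueness of the normalized linearization we get $h(\zeta z)=\zeta h(z)$, which forces only powers of $z$ congruent to $1\pmod d$ to appear in $h$; thus $h(z)=z\,\tilde h(z^d)$ for some analytic $\tilde h$ with $\tilde h(0)=1$. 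Setting $H(w):=w\,\tilde h(w)^d$, one has $H\circ\pi=h^d$, and raising $h\circ F=\lambda h$ to the $d$-th power produces $H\circ G=\lambda^d H$, so $H$ linearizes $G$. Conversely, given an analytic linearization $H(w)=w+O(w^2)$ of $G$, the analytic $d$-th root $h$ of $H(z^d)$ normalized so that $h(z)=z+O(z^{d+1})$ (available because $H(z^d)/z^d=1+O(z^d)$) satisfies $h(F(z))^d=\lambda^d h(z)^d$; a comparison of linear terms, using $F(z)=\lambda z+\lambda z^{d+1}$ and $h(w)=w+O(w^{d+1})$, rules out a nontrivial $d$-th root of unity and yields $h\circ F=\lambda h$.

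Next, the rescaling $w=v/d$ gives $d\cdot G(v/d)=P_{d\omega,d}(v)$, so $G$ is linearly conjugate to Geyer's map $P_{d\omega,d}$. Theorem~\ref{Ge} then says that $G$ is analytically linearizable iff $d\omega$ is Bruno, and combined with the reduction above this shows that $F$ is analytically linearizable iff $d\omega$ is Bruno. The stated form of the corollary then follows from the classical arithmetic fact that the Bruno condition is invariant under multiplication of the argument by a nonzero integer (so that $d\omega$ is Bruno iff $\omega$ is), which is a standard property of the Brjuno function reflecting the fact that the continued fraction denominators of $\omega$ and $d\omega$ grow at comparable rates. The main point of care in this plan is the converse direction of the $\mathbb{Z}/d$-reduction, where one must select the correct analytic branch of the $d$-th root and check that its linear coefficient is exactly $\lambda$ rather than $\zeta\lambda$ for a nontrivial $\zeta^d=1$; the remainder is an assembly of Theorem~\ref{Ge}, Bruno's theorem, and the arithmetic invariance of the Bruno set.
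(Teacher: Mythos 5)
Your argument is correct, but it follows a genuinely different route from the paper's. The paper deduces the corollary from the equivalence between Lyapunov stability and analytic linearizability (Siegel--Moser, P\'erez-Marco): it observes that $\rho(z)=z^d$ semi-conjugates $f(z)=\lambda z(1+z^d/d)$ to $P_{d\omega,d}$, that a finite ramified covering preserves stability at the ramification point, and then transfers non-linearizability through stability, finishing with the homothety $z\mapsto d^{1/d}z$ and the invariance of the Bruno condition under $\omega\mapsto d\omega$. You instead work directly with the linearizing conjugacies: the symmetry $F(\zeta z)=\zeta F(z)$ together with uniqueness of the tangent-to-identity linearization forces $h(z)=z\,\tilde h(z^d)$, which lets you push a linearization of $F$ down to one of $G$, and conversely you lift a linearization of $G$ by an analytic $d$-th root, with the correct check that the linear coefficient is $\lambda$ and not $\zeta\lambda$. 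Your version is more elementary in that it bypasses the (deep) stability--linearizability equivalence entirely, at the price of the branch-selection bookkeeping in the converse direction; the paper's version is shorter precisely because stability passes through a ramified cover with no such care. Both proofs ultimately rest on the same two external inputs, Geyer's Theorem~\ref{Ge} and the fact that $d\omega$ is Bruno if and only if $\omega$ is, so the reduction step is the only place where they diverge.
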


\begin{proof} 
Again, this is an easy consequence of the equivalence
between Lyapunov stability and analytic linearizability recalled earlier. Indeed, the mapping 
$f(z)=e^{2\pi i\omega} z\left(1+\frac{z^d}{d}\right)$ is the pullback by the ramified covering
$Z=\rho(z)=z^d$ of the mapping 
$P_{d\omega,d}(Z)=e^{2\pi id\omega}Z\left(1+\frac{Z}{d}\right)^d$, that is
$\rho\circ f_d=P_{d\omega,d}\circ \rho$, and a finite ramified covering preserves stability around the point of ramification. Finally, on the one hand~$d\omega$ is non-Bruno if and only if~$\omega$ is non-Bruno, on the other hand the homothety $z\mapsto d^{1/d}z$ conjugates $f$ to the mapping $F$ in the corollary.
\end{proof}


\subsection{The IPM alternative}\label{general}


In the particular cases examined in Section~\ref{secClassEx} 
(area-preserving or complex holomorphic),
no weak attraction can be present: the map is formally conservative in
the sense of Definition~\ref{defformcons}. Using
Proposition~\ref{LFbasic} we will prove Theorem~\ref{main}, which
covers all cases, even those where weak attraction may exist.
The method is an elaboration on Section~5.4 of \cite{CSSW},
which already relied on what we now call the
``Ilyashenko-P\'erez-Marco alternative'', reducing the proof of generic
divergence of~$L_F$ to exhibiting an example where~$L_F$ is divergent.
  

\medskip

\begin{definition}
Suppose that $(K_t)$ is a one-parameter family of formal series in one or several
indeterminates. We say that $(K_t)$ is ``regular-polynomial'' if there
exist real numbers~$a$ and~$b$ such that, for every
$N\in\Z_{\ge0}$,
the coefficient of every monomial of total degree~$N$ in~$K_t$
is a polynomial in~$t$ of degree $\le aN+b$.
\end{definition} 

The article \cite{PM2}, based on the Bernstein-Walsh inequality in
potential theory, leads to 

\begin{lemma}  \label{lemBW} 
  Let $(K_t)$ be a regular-polynomial family of formal series.
%
  %
  Then either $K_t$ is convergent for all $t\in\C$, or
  %
  %
  $\{\, t\in\C\mid K_t \; \text{is divergent} \,\}$ has zero
  Lebesgue measure in~$\C$ 
  and $\{\, t\in\R\mid K_t \; \text{is divergent} \,\}$ has zero
  Lebesgue measure in~$\R$.
\end{lemma}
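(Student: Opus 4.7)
The plan is to derive Lemma~\ref{lemBW} from the classical Bernstein--Walsh inequality of potential theory, in the spirit of Pérez-Marco's approach in~\cite{PM2}. Write $K_t = \sum_\alpha c_\alpha(t)\,X^\alpha$, where $X=(X_1,\dots,X_m)$ are the indeterminates and each coefficient $c_\alpha(t)$ is, by the regular-polynomial hypothesis, a polynomial in~$t$ of degree at most $a|\alpha|+b$. By footnote~\ref{footnCV}, convergence of $K_t$ is equivalent to the existence of $\rho,M>0$ with $|c_\alpha(t)|\le M\rho^{|\alpha|}$ for every~$\alpha$, so the set of convergent parameters decomposes as the countable union, over rational $\rho,M>0$, of the closed sets
\[
E_{\rho,M} \defeq \{\,t\in\C \mid |c_\alpha(t)|\le M\rho^{|\alpha|}\ \text{for all}\ \alpha\,\}.
\]

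First, I would invoke Bernstein--Walsh as a black box: for any non-polar compact $K\subset\C$ (for instance, one of positive Lebesgue measure) and any compact $K'\subset\C$, there exists a constant $C=C(K,K')\ge1$, computable from the Green function of $\C\setminus K$, such that every polynomial $p$ of degree $d$ satisfies $\|p\|_{K'}\le C^d\,\|p\|_K$ (see, e.g., Ransford, \emph{Potential Theory in the Complex Plane}, Theorem~5.5.7). A crucial input is that a subset of $\C$ of positive Lebesgue measure, and a subset of $\R$ of positive one-dimensional Lebesgue measure viewed inside $\C$, both have positive logarithmic capacity and are therefore non-polar.

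Second, the core step is a Baire-type pigeonhole combined with Bernstein--Walsh. Assuming that the convergent parameter set has positive Lebesgue measure in~$\C$, countable additivity forces some $E_{\rho_0,M_0}$ to have positive Lebesgue measure, hence to contain a non-polar compact~$K$. Applying Bernstein--Walsh to each coefficient polynomial $c_\alpha$ (of degree $\le a|\alpha|+b$) then yields, for an arbitrary compact $K'\subset\C$ and every $t\in K'$,
\[
|c_\alpha(t)| \;\le\; C^{a|\alpha|+b}\,\|c_\alpha\|_K \;\le\; (M_0\,C^b)\bigl(\rho_0\,C^a\bigr)^{|\alpha|},
\]
which is precisely the convergence condition for $K_t$, with uniform polydisc radii at least $(\rho_0 C^a)^{-1}$. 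Since $K'$ was arbitrary, $K_t$ is convergent for every $t\in\C$. Taking the contrapositive yields the dichotomy announced in Lemma~\ref{lemBW}. The real-variable version is obtained identically, selecting the non-polar compact $K$ inside~$\R$.

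The main obstacle is Bernstein--Walsh itself, a deep but classical result from potential theory which I would cite rather than reprove; the remaining ingredients (the countable decomposition of the convergent set, the extraction of a non-polar compact inside some~$E_{\rho_0,M_0}$, and the geometric rearrangement producing the bound $(M_0 C^b)(\rho_0 C^a)^{|\alpha|}$) are routine measure-theoretic and polynomial bookkeeping.
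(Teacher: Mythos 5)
Your proof is correct and follows exactly the route the paper itself relies on: the paper does not prove Lemma~\ref{lemBW} in-house but attributes it to P\'erez-Marco's Bernstein--Walsh argument (citing \cite{PM2} and Lemmas~15--16 of \cite{CSSW}), which is precisely the decomposition of the convergence locus into the closed sets $E_{\rho,M}$, the extraction of a non-polar compact of positive measure, and the propagation of the geometric coefficient bound that you carry out. One caveat worth recording: your contrapositive shows that the set of $t$ for which $K_t$ \emph{converges} has zero Lebesgue measure, whereas the lemma as printed speaks of the set where $K_t$ \emph{diverges}; since it is your version that is actually needed to deduce Theorem~\ref{thmIPM} (generic divergence), the printed statement appears to contain a typo and your proof establishes the intended assertion.
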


The reader can check Lemmas~15 and~16 of \cite{CSSW} for some details.
Building on Lemma~\ref{lemBW},
the arguments given in the proof of \cite[Theorem~1]{PM2}
(see also \cite[Sec.~5.4]{CSSW} and \cite{IYS}\footnote{The general idea of the
  alternative goes back to Ilyashenko but the recourse to potential
  theory, much more precise, is due to P\'erez-Marco.})
allow one to deduce
\begin{theorem}[Ilyashenko - P\'erez-Marco alternative] \label{thmIPM}
  Suppose that $\mathcal{F}$ is a closed affine subspace of~$\Fomcv$
  %
  and $F\mapsto K(F)$ is a map from~$\cF$ to the space of formal
  series in one or several indeterminates such that,
  %
  %
  for any affine one-parameter family $F_t=(1-t)F_0+tF_1$ in~$\cF$,
  the corresponding family~$\big(K(F_t)\big)$ 
  is regular-polynomial. 
  Then either $K(F)$ is convergent for all $F\in\cF$
  %
  or $K(F)$ is divergent for generic~$F$.
  %
  %
  %
\end{theorem}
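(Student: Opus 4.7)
The plan is the standard Baire-category dichotomy: set
\[
\mathcal{C}\defeq\{F\in\mathcal{F}\mid K(F)\text{ is convergent}\},
\qquad \mathcal{D}\defeq\mathcal{F}\setminus\mathcal{C},
\]
and show that if $\mathcal{D}$ is nonempty then it is a dense $G_\delta$ subset of~$\mathcal{F}$. Since $\Fomcv$ is a Fr\'echet space in the compact-open topology, the closed affine subspace~$\mathcal{F}$ is itself a Baire space, so density plus $G_\delta$ will yield the genericity of~$\mathcal{D}$. The two ingredients I need are (a)~density of~$\mathcal{D}$, obtained via Lemma~\ref{lemBW} applied along affine lines in~$\mathcal{F}$, and (b)~the $G_\delta$ character of~$\mathcal{D}$, obtained from continuity of each coefficient of~$K(F)$ in~$F$.

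For (a), I fix $F_0\in\mathcal{F}$, an open neighborhood $U\ni F_0$, and some $F_1\in\mathcal{D}$ (which exists by assumption). The affine line $F_t\defeq(1-t)F_0+tF_1$ lies in~$\mathcal{F}$, and the family $\bigl(K(F_t)\bigr)$ is regular-polynomial by hypothesis. Since $K(F_1)$ diverges, the first alternative of Lemma~\ref{lemBW} is ruled out, hence $\{t\in\R\mid K(F_t)\text{ is divergent}\}$ has full Lebesgue measure in~$\R$. In particular it is dense and meets every interval $(0,\epsilon)$; choosing $\epsilon$ small enough that $F_t\in U$ for $|t|<\epsilon$ produces an element of $U\cap\mathcal{D}$, proving density.

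For (b), I exhaust~$\mathcal{C}$ by the sets
\[
\mathcal{C}_{\rho,M}\defeq\Big\{\, F\in\mathcal{F} \;\Big|\; \sum_{\alpha}\bigl|K(F)_\alpha\bigr|\,\rho^{|\alpha|}\le M \,\Big\},
\qquad \rho\in\Q_{>0},\; M\in\N,
\]
so that $\mathcal{D}=\bigcap_{\rho,M}(\mathcal{F}\setminus\mathcal{C}_{\rho,M})$. Once we know each map $F\mapsto K(F)_\alpha$ is continuous on~$\mathcal{F}$ in the compact-open topology, the partial sums $\sum_{|\alpha|\le n}|K(F)_\alpha|\rho^{|\alpha|}$ are continuous, each $\mathcal{C}_{\rho,M}$ is a countable intersection of closed sets, hence closed, and $\mathcal{D}$ is $G_\delta$. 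The anticipated main obstacle is precisely this continuity, which is not made explicit in the hypotheses of the theorem: one has to argue that in each intended application (for instance the balanced admissible series~$L_F$ of Section~\ref{secselect}) the coefficient $K(F)_\alpha$ is obtained by a triangular recursion of the type~\eqref{dominant} and depends polynomially on only finitely many Taylor coefficients of~$F$, themselves continuous on~$\Fomcv$ by Cauchy estimates on~$B_\De$. The regular-polynomial hypothesis encodes the quantitative version of this algebraic dependence and is exactly what enables the Bernstein--Walsh input of Lemma~\ref{lemBW} in step~(a).
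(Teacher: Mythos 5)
Your argument is correct and is essentially the proof the paper intends: Theorem~\ref{thmIPM} is not proved in the text but delegated to \cite{PM2} and \cite[Sec.~5.4]{CSSW}, and those references run exactly your Baire-category dichotomy (density of the divergence set via the one-dimensional alternative applied along affine lines, plus a $G_\delta$ exhaustion of the convergence set by the closed sets $\mathcal{C}_{\rho,M}$). Two remarks. First, you have tacitly used Lemma~\ref{lemBW} in its intended form: as printed, its second alternative says that the set of $t$ with $K_t$ \emph{divergent} has zero measure, which is the opposite of what your density step needs; what the Bernstein--Walsh argument actually yields is that the set of $t$ with $K_t$ \emph{convergent} has zero Lebesgue measure (indeed zero capacity) unless it is all of~$\C$, and that is the version you correctly apply. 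Second, the continuity of each coefficient functional $F\mapsto K(F)_\alpha$, which you rightly flag, is genuinely needed for the $G_\delta$ step and does not follow from the regular-polynomial hypothesis alone (polynomiality along every line does not imply continuity on a Fr\'echet space); this hypothesis is implicit in the paper as well, and it holds in every application made there (Propositions~\ref{degL}, \ref{degtaun} and~\ref{degLF}), where each coefficient of $K(F)$ is a polynomial in finitely many Taylor coefficients of~$F$, themselves continuous on~$\Fomcv$. So your proof is at the same level of rigor as the intended one and is more explicit about where that rigor depends on the applications.
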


  %

To prove Theorem~\ref{main}, in view of Proposition~\ref{LFbasic},
in each of the cases (i)--(iv)
we just need to  \begin{enumerate}
  \item 
    show that the Ilyashenko - P\'erez-Marco's principle can be
    applied
    with $\mathcal{F}=\Fomcv$ or $\FomJcv$
    to the map $F \mapsto L_F\in\C[[z,\bz]]$ (or to the map $F\mapsto \tau_F\in\C[[z]]$, since the
    divergence of~$\tau_F$ is stronger than the divergence of~$L_F$);
  \item 
    find examples of $F\in\mathcal{F}$ 
    for which~$L_F$ is divergent.
  \end{enumerate}
In the rest of Section~\ref{general}, we will tackle the first point.

\bigskip


\noindent  {\bf  Polynomial dependence of the coefficients of the
  resonant-free admissible series} 
\smallskip


With a view to making explicit the dependence of the coefficients~$L_{rs}$
of an $F$-admissible series~$L$ upon those of~$F$, we first revisit the
proof of Lemma~\ref{couple}.

\begin{lemma}   \label{lemrevisitgrs}
  With the notations~\eqref{eqFFstLLstGGst}--\eqref{eqFstFjketc},
  the coefficient of $z^r\bar z^s$ in the right-hand side of~\eqref{eqdefAlowBlow}
  is
  \[
    A_{rs} = \la \ov F_{s,r-1}+\bar\la F_{r,s-1}+ G_{rs}
    \qquad \text{for $r+s\ge3$,}
  \]
 where~$G_{rs}$ is the following polynomial in
  $\big(L_{jk}\big)_{j+k<r+s}$, $\big(\Ga_n\big)_{2n<r+s}$ and $\big(F_{jk}\big)_{j+k<r+s-1}:$
  \begin{multline}   \label{eqgrssumofthree}
  G_{rs} =
  \sum 
  F_{jk} \ov F_{qp}
-\sum 
{n+c\choose c}\Gamma_{n+c}L_{r_1s_1}\cdots L_{r_cs_c} \\[1ex]
+\sum 
{\ell+a\choose a}{m+b\choose b} \la^{\ell-m} L_{\ell+a,m+b} F_{j_1k_1}\cdots
F_{j_ak_a}\ov F_{q_1p_1}\cdots \ov F_{q_bp_b}
\end{multline}
where the first sum runs over non-negative integers $j,k,p,q$ such
that
\beglab{eqfirstsum}
j+k\ge2, \quad  p+q\ge2, \quad  j+p=r, \quad  k+q=s,
\edla
the second sum over non-negative integers $n,c$ and
$r_i,s_i$ ($i=1,\ldots,c$) such that
\beglab{eqsecondsum}
c\ge 1, \quad
n+c\ge 2, \quad
r_i+s_i\ge 3, \quad
n+\sum_{i=1}^c r_i=r, \quad
n+\sum_{i=1}^c s_i=s
\edla
and the third sum runs over non-negative integers
$a,b,\ell,m,j_\al,k_\al$ ($\al=1,\ldots,a$) and $p_\be,q_\be$ ($\be=1,\ldots,b$) such that
\begin{multline}  \label{eqthirdsum}
  a+b\ge 1,\quad
\ell+a+m+b\ge 3, \quad
  j_\al+k_\al\ge 2, \quad
  p_\be+q_\be\ge 2,\\[1ex]
\ell+\sum_{\al=1}^a j_\alpha+\sum_{\be=1}^b p_\beta=r, \quad
m+\sum_{\al=1}^a k_\alpha+\sum_{\be=1}^b q_\beta=s.
\end{multline}
\end{lemma}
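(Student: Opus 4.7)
The proof is essentially a direct (if bookkeeping-heavy) expansion of the three terms on the right-hand side of~\eqref{eqdefAlowBlow} and identification of the coefficient of $z^r\bar z^s$.

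First I would treat the term $\nu\circ F-\nu$. Expanding $\nu\circ F=F\bar F=(\la z+F_*)(\bar\la\bar z+\ov F_*)$ and subtracting $\nu=z\bar z$, I obtain $\la z\,\ov F_*+\bar\la\bar z\,F_*+F_*\ov F_*$. Since $\ov F_*(z)=\sum_{p+q\ge 2}\ov F_{q,p}z^p\bar z^q$, the coefficient of $z^r\bar z^s$ in $\la z\,\ov F_*$ is $\la\,\ov F_{s,r-1}$, the coefficient in $\bar\la\bar z\,F_*$ is $\bar\la\,F_{r,s-1}$, and the coefficient in $F_*\ov F_*$ is precisely the first sum in~\eqref{eqgrssumofthree}, with the constraints~\eqref{eqfirstsum}. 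This accounts for the linear-in-$F$ terms $\la\ov F_{s,r-1}+\bar\la F_{r,s-1}$ and the first sum of~$G_{rs}$.

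Next I would handle the $\Ga_*$-sum. From $\Ga_*(R)=\sum_{n\ge 2}\Ga_n R^n$ I get $\tfrac{1}{c!}\Ga_*^{(c)}(R)=\sum_{n\ge 0,\,n+c\ge 2}\binom{n+c}{c}\Ga_{n+c}R^n$, so evaluating at $R=\nu=z\bar z$ and multiplying by $L_*^c=\sum L_{r_1s_1}\cdots L_{r_cs_c}z^{\sum r_i}\bar z^{\sum s_i}$ (with each $r_i+s_i\ge 3$) and extracting the coefficient of $z^r\bar z^s$ gives exactly the second sum in~\eqref{eqgrssumofthree} with the constraints~\eqref{eqsecondsum} (the minus sign matching the one in~\eqref{eqdefAlowBlow}). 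Similarly, for the second term I compute
\[
\tfrac{1}{a!b!}\,\pa_z^a\pa_{\bar z}^b L_*=\sum_{\ell,m\ge 0,\,\ell+a+m+b\ge 3}\binom{\ell+a}{a}\binom{m+b}{b}L_{\ell+a,m+b}z^\ell\bar z^m,
\]
evaluate at $F^{(1)}(z)=\la z$ to convert $z^\ell\bar z^m$ into $\la^{\ell-m}z^\ell\bar z^m$ (using $\bar\la=\la^{-1}$), and multiply by $F_*^a\ov F_*^b$, whose monomials carry factors $F_{j_\al k_\al}$ and $\ov F_{q_\be p_\be}$ with the indices summing to the required total degrees. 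Collecting the coefficient of $z^r\bar z^s$ yields the third sum of~\eqref{eqgrssumofthree} together with the constraints~\eqref{eqthirdsum}.

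Finally, I would verify the polynomiality statement, namely that $G_{rs}$ depends only on $L_{jk}$ with $j+k<r+s$, on $\Ga_n$ with $2n<r+s$, and on $F_{jk}$ with $j+k<r+s-1$. The first sum only involves $F$-coefficients with $j+k\le r+s-2$ since $p+q\ge 2$. In the second sum, from $n+\sum r_i=r$, $n+\sum s_i=s$ and $r_i+s_i\ge 3$ one gets $2n\le r+s-3c$, so $2(n+c)\le r+s-c<r+s$, and each $r_i+s_i\le r+s-2n-3(c-1)<r+s$. In the third sum, $(\ell+a)+(m+b)=\ell+m+a+b\le r+s-(a+b)<r+s$ (using that the omitted $(j_\al,k_\al)$ and $(p_\be,q_\be)$ each contribute at least $2$), and a similar accounting gives $j_\al+k_\al\le r+s-2$ and $p_\be+q_\be\le r+s-2$, using the lower bound $\ell+a+m+b\ge 3$ when $a+b=1$.

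The only real pitfall is the combinatorics, specifically making sure the factorials from $\pa_z^a\pa_{\bar z}^b$ and the $1/(a!b!)$ collapse correctly into the binomial coefficients $\binom{\ell+a}{a}\binom{m+b}{b}$, and similarly for the $\Ga_*^{(c)}$ expansion; once this combinatorial bookkeeping is in place, the identification of the three sums and the verification of the degree bounds are routine.
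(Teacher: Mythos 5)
Your proof is correct and follows essentially the same route as the paper's: expand $\nu\circ F-\nu$ to isolate the linear terms $\la\ov F_{s,r-1}+\bar\la F_{r,s-1}$ and the quadratic sum, use the identities $\frac{1}{c!}\pa_R^c(R^{n+c})=\binom{n+c}{c}R^n$ and $\frac{1}{a!b!}\pa_z^a\pa_{\bar z}^b(z^{\ell+a}\bar z^{m+b})=\binom{\ell+a}{a}\binom{m+b}{b}z^\ell\bar z^m$, and expand $F_*^a\ov F_*^{\,b}$ and $L_*^c$ monomial by monomial. You even supply the degree bounds in more detail than the paper does here (it partly defers them to the surrounding results), and your accounting of them is sound.
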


\begin{proof}
Expand the various constituents of~\eqref{eqdefAlowBlow}.
On the one hand, the identities
  \[
    \frac{1}{c!}\pa^c_R \big( R^{n+c} \big) = {n+c\choose c} R^n,
    \qquad  
    \frac{1}{a!}\frac{1}{b!}\pa^a_z\pa^b_{\bar z}
    \big( z^{\ell+a} \bar z^{m+b} \big) =
    {\ell+a\choose a}{m+b\choose b}z^\ell \bar z^m
\]
allow us to express $\frac{1}{c!}\Ga_*^{(c)}\circ\nu$ and
$\frac{1}{a!b!}\pa^a_z\pa^b_{\bar z}L_*$;
on the other hand, one can write
\[
  F_{*}^a \ov F_*^{\raisebox{-.5ex}{$\scriptstyle b$}}
  =\sum F_{j_1k_1}\cdots F_{j_ak_a}\ov
F_{q_1p_1}\cdots \ov
F_{q_bp_b}z^{j_1+\cdots+j_a+p_1+\cdots+p_b}\bar z^{k_1+\cdots+k_a+q_1+\cdots+q_b}
\]
with a sum over 
all non-negative integers
$j_\al,k_\al$ ($\al=1,\ldots,a$) and $p_\be,q_\be$ ($\be=1,\ldots,b$) such that
$  j_\al+k_\al\ge 2$ and $p_\be+q_\be\ge 2$,
%
%
and similarly for $L_*^c$.
\end{proof}

\begin{proposition}\label{degL}
For $F=\la z + \sum_{j+k\ge 2}F_{jk}z^j\bar z^k\in \Fom$, consider the $F$-admissible pair $(L_F^*,\Ga_F^*)$
  where~$L_F^*$ is the resonant-free $F$-admissible series of Definition~\ref{defresfreeLFst}
  and use the notation
  \[
    L_F^*(z) = z\bar z + \sum_{r+s\ge3} L_{rs}^* z^r\bar z^s,
    \qquad
    \Ga_F^*(R) = R + \sum_{n\ge2} \Ga_n^* R^n.
    \]  

      \noindent (i)
    For each $r,s\ge0$ such that $r+s\ge3$ and $n\ge2$,
    $L_{rs}^*$ is a polynomial function of $(F_{jk})_{j+k<r+s}$
    and $\Ga_n^*$ is a polynomial function of $(F_{jk})_{j+k<2n}$.   

    \medskip
    
     \noindent (ii)
    For any affine one-parameter family $F_t=(1-t)F_0+tF_1$ in~$\Fom$,
  %
  %
    the corresponding families $(L_{F_t}^*)$ and $(\Ga_{F_t}^*)$ are
    regular-polynomial, with 
    \begla
    \deg_t(L_{rs}^*) \le r+s-2, \qquad \deg_t(\Ga_n^*)\le 2n-2.
    \edla  
%
%
\end{proposition}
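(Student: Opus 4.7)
The plan is to prove both statements by strong induction on $N=r+s$, directly from the recursion furnished by Lemma~\ref{couple} specialized to $\rho=0$:
\[
L_{rs}^* = \frac{A_{rs}}{1-\la^{r-s}} \quad (r\neq s),\qquad L_{nn}^*=0,\qquad \Ga_n^* = A_{nn},
\]
together with the decomposition $A_{rs}=\la\ov F_{s,r-1}+\bar\la F_{r,s-1}+G_{rs}$ from Lemma~\ref{lemrevisitgrs}, where $G_{rs}$ is the three-fold sum of~\eqref{eqgrssumofthree}. The base case $r+s=3$ is trivial: the constraints~\eqref{eqfirstsum}--\eqref{eqthirdsum} make each of the three sums in $G_{rs}$ empty (the first requires $(j+k)+(p+q)=r+s\ge 4$; the second requires $\sum(r_i+s_i)\ge 3c$ with $n+c\ge 2$; the third requires $\ell+a+m+b\ge 3$ together with $\ell+m\le 3-2(a+b)$), so $A_{rs}$ reduces to the direct term, affine-linear in $(F_{jk})_{j+k=2}$.

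For part (i), the inductive step is immediate from the explicit support of $G_{rs}$ read off from Lemma~\ref{lemrevisitgrs}: $G_{rs}$ is a polynomial in $L_{r's'}^*$ with $r'+s'<r+s$, in $\Ga_{n'}^*$ with $2n'<r+s$, and in $F_{jk}$ with $j+k<r+s-1$. By the induction hypothesis, all of these are polynomial in $(F_{jk})_{j+k<r+s}$, and the direct term contributes $F_{jk}$ with $j+k=r+s-1$. The same argument specialized at $r=s=n$ yields the claim for $\Ga_n^*$.

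For part (ii), let $d(\cdot)$ denote the degree in~$t$; along an affine family one has $d(F_{jk})\le 1$. The claim is that $d(A_{rs})\le r+s-2$, which implies $d(L_{rs}^*)\le r+s-2$ (division by the non-zero constant $1-\la^{r-s}$ preserves degree) and, at $r=s=n$, $d(\Ga_n^*)\le 2n-2$. The direct term satisfies $d\le 1\le r+s-2$ since $r+s\ge 3$. A term $F_{jk}\ov F_{qp}$ from the first sum of~\eqref{eqgrssumofthree} has $d\le 2\le r+s-2$, since~\eqref{eqfirstsum} forces $r+s\ge 4$. In the second sum, the induction hypothesis gives
\[
d\bigl(\Ga^*_{n+c}\,L^*_{r_1s_1}\cdots L^*_{r_cs_c}\bigr) \le \bigl(2(n+c)-2\bigr)+\sum_{i=1}^c(r_i+s_i-2) = 2n+2c-2+(r+s-2n)-2c = r+s-2,
\]
using $\sum(r_i+s_i)=r+s-2n$ from~\eqref{eqsecondsum}. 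In the third sum,
\[
d\bigl(L^*_{\ell+a,m+b}\,F_{j_1k_1}\cdots F_{j_ak_a}\,\ov F_{q_1p_1}\cdots\ov F_{q_bp_b}\bigr) \le (\ell+a+m+b-2)+(a+b),
\]
and~\eqref{eqthirdsum} together with $(j_\al+k_\al),(p_\be+q_\be)\ge 2$ gives $\ell+m\le r+s-2(a+b)$, so the bound is $\le r+s-(a+b)-2+(a+b)=r+s-2$. The whole argument is essentially bookkeeping; the only point requiring care is that each of the three degree identities uses the homogeneity constraints of~\eqref{eqfirstsum}--\eqref{eqthirdsum} tightly, with no slack.
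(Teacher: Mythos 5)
Your proposal is correct and takes essentially the same route as the paper's own proof: strong induction on $r+s$ using the recursion of Lemma~\ref{couple} specialized to $\rho=0$ together with the explicit three-fold sum of Lemma~\ref{lemrevisitgrs}, with the homogeneity constraints \eqref{eqfirstsum}--\eqref{eqthirdsum} supplying exactly the index and degree bounds needed. Your degree bookkeeping in part~(ii) simply spells out what the paper dismisses as ``an easy computation,'' and it checks out.
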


\begin{proof}
We will prove by induction on $N\ge3$ that~(i) and~(ii) hold for all
$r,s\ge0$ and $n\ge2$ such that $r+s\le N$ and $2n\le N$.
%
%
To that end, we use the formulas obtained at the end of the proof of
Lemma~\ref{couple} together with those of Lemma~\ref{lemrevisitgrs}.

For $N=3$, (i) and~(ii) hold because necessarily $r\neq s$ and
$L_{rs}^* = (1-\la^{r-s})\ii (\la \ov F_{s,r-1}+\bar\la F_{r,s-1})$
(and there is no $n\ge2$ such that $2n\le 3$).

Suppose that $N\ge4$ and $r+s=N$. We have
\beglab{eqLrsst}
r =
s \enspace\Longrightarrow \enspace
L_{rs}^* = 0, \qquad
r\neq s \enspace\Longrightarrow \enspace
L_{rs}^* = \frac{\la \ov F_{s,r-1}+\bar\la F_{r,s-1}+ G_{rs}^*}{1-\la^{r-s}}
\edla
and, if $N$ is even,
\beglab{eqGanst}
2n=N \enspace\Longrightarrow \enspace
\Ga_n^* = \la \ov F_{n,n-1}+\bar\la F_{n,n-1}+ G_{nn}^*
\edla
where $G_{rs}^*$ and~$G_{nn}^*$ are obtained by evaluating $G_{rs}$ and~$G_{nn}$ on
  $\big(L_{jk}^*\big)_{j+k<N}$, $\big(\Ga_n^*\big)_{2n<N}$ and
  $\big(F_{jk}\big)_{j+k<N-1}$,
  which according to~\eqref{eqgrssumofthree} yields sums of three terms.
  The induction assumption shows that each of these three terms is a
  polynomial in $\big(F_{jk}\big)_{j+k<N-1}$.
  Indeed, for the first one~\eqref{eqfirstsum} gives
  $j+k+p+q=N$, whence $j+k$ and $p+q\le N-2$.
  For the second one,~\eqref{eqsecondsum} gives
  $N=2n + \sum_{i=1}^c (r_i+s_i) \ge 2n+3c\ge 2n+2c+1$,
  whence $2(n+c)\le N-1$
  and each $r_i+s_i$ is at most $N-2n\le N-2$ if $c=1$ or $N-2n-3\le
  N-3$ if $c\ge2$.
  For the third one,~\eqref{eqthirdsum} gives
  $N=\ell+m + \sum_{\al=1}^a (j_\al+k_\al) + \sum_{\be=1}^b
  (p_\be+q_\be) \ge \ell+m+2a+2b\ge \ell+m+a+b+1$,
  whence $\ell+m+a+b\le N-1$
  and each $j_\al+k_\al$ is at most $N-\ell-m\le N-2$ if $(a,b)=(1,0)$ or $N-\ell-m-3\le
  N-3$ otherwise, and similarly for the $p_\be+q_\be$.

We thus get~(i) at rank~$N$ from~\eqref{eqLrsst} and~\eqref{eqGanst}.

As for~(ii), this results from an easy computation when specializing
the above polynomials to the case of the affine family~$(F_t)$, since
$\deg_t(F_{jk})\le1$ then.
%
%
\end{proof}

%
%

\medskip


\noindent  {\bf  Polynomial dependence of the coefficients
  of the foliation involution} 
\medskip

Let $F\in\Fom$. 
In view of Corollary~\ref{uniqueL} and Lemma~\ref{lemdefinvol},
the involution~$\tau_F$ can be obtained from any $F$-admissible
series~$L$ by the equation $\Lambda(\tau_F(z))=\Lambda(z)$, where
$\Lambda(z) \defeq L(z,z)$.
We choose $L=L_F^*$ as in Proposition~\ref{degL} and 
%
%
introduce the notations
\beglab{eqnotaLa}
  \Lambda(z)=z^2+\Lambda_*(z)=z^2+\sum_{n\ge 3}\Lambda_nz^n,\qquad
  \tau_F(z)=-z+\tau_*(z)=-z+\sum_{n\ge 2}\tau_n z^n.
  \edla
  We thus get 
  $$\bigl(-z+\tau_*(z)\bigr)^2+\Lambda_*(-z+\tau_*(z))=z^2+\Lambda_*(z),$$
  that is
  %
  %
$$2z\tau_*(z)=\Lambda_*(-z+\tau_*(z))-\Lambda_*(z)+\tau_*(z)^2=\Lambda_*(-z)-\Lambda_*(z)+\tau_*(z)^2+
\sum_{r\ge 1}\frac{1}{r!}\Lambda_*^{(r)}(-z)\tau_*^r(z),$$
%
%
whence we derive induction formulas: equating the
coefficients of~$z^n$ in the leftmost and rightmost sides of this
identity and dividing by~$2$, for every $n\ge3$, we get
\begin{equation}\label{5}
\begin{split}
  \tau_{n-1}=&-\epsilon_n\Lambda_n +
  \frac{1}{2}\sum_{ n_1,n_2\ge2 \atop n_1+n_2=n}\tau_{n_1}\tau_{n_2}\\[1ex]
  &+\frac{1}{2}\sum_{r\ge 1} \, \sum_{n_0\ge 0\atop n_0+r\ge 3} \,
  \sum_{n_1,\ldots,n_r\ge2 \atop n_0+\sum n_i=n}
  (-1)^{n_0}{n_0+r\choose r} \Lambda_{n_0+r}\tau_{n_1}\cdots\tau_{n_r}
\end{split}
\end{equation}
where $\epsilon_n\defeq0$ if $n$ is even, $\epsilon_n\defeq1$ if $n$ is odd.
Note that
\begla
\Lambda_n = \sum_{r+s=n} L_{rs}^*
\quad \text{for every $n\ge3$.}
\edla
We deduce a result parallel to Proposition~\ref{degL}:

%

\begin{proposition} \label{degtaun}
 (i)
  The coefficient~$\tau_n$ of~$z^n$ in the involution~$\tau_F$ is a
  polynomial function of $(F_{jk})_{j+k\le n}$.
  
  \medskip

\noindent (ii)
  For any affine one-parameter family $F_t=(1-t)F_0+tF_1$ in~$\Fom$,
  the corresponding family of involutions $(\tau_{F_t})$ is
  regular-polynomial, with
%
  %
  %
  \[
    \deg_t(\tau_n)\le n-1.
  \]   
\end{proposition}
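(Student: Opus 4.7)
The plan is to prove both statements by strong induction on $n$, feeding the bounds of Proposition~\ref{degL} into the recursion~\eqref{5}. The starting observation is that $\Lambda_n = \sum_{r+s=n} L^*_{rs}$, so Proposition~\ref{degL} immediately yields that $\Lambda_n$ is a polynomial in $(F_{jk})_{j+k\le n-1}$ with $\deg_t(\Lambda_n)\le n-2$ along any affine family $F_t=(1-t)F_0+tF_1$. This is the only external input required.

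The induction hypothesis at step $n\ge 3$ is that for every $m\le n-2$, $\tau_m$ is a polynomial in $(F_{jk})_{j+k\le m}$ with $\deg_t(\tau_m)\le m-1$. The base case $n=3$ is immediate from~\eqref{5}: both sums on the right-hand side are empty (the second needs $n_1+n_2=n$ with $n_i\ge 2$, hence $n\ge 4$; the third needs $r\ge 1$, $n_i\ge 2$ and $n_0+r\ge 3$, which also forces $n\ge 4$), so $\tau_2=-\Lambda_3$, and the bounds follow from the observation above. For the inductive step, I would check each family of summands in~\eqref{5} separately. The term $-\epsilon_n\Lambda_n$ has the required bounds directly. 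For $\tau_{n_1}\tau_{n_2}$ with $n_1+n_2=n$ and $n_i\ge 2$, each $n_i\le n-2$ falls under the inductive hypothesis, and $\deg_t(\tau_{n_1}\tau_{n_2})\le(n_1-1)+(n_2-1)=n-2$, with polynomial dependence on $(F_{jk})_{j+k\le n-2}$. For the third family $\Lambda_{n_0+r}\tau_{n_1}\cdots\tau_{n_r}$, the constraints $r\ge 1$, $n_i\ge 2$ and $n_0+\sum n_i=n$ give $n_0+r\le n-r\le n-1$ and each $n_i\le n-2$; Proposition~\ref{degL} then applies to $\Lambda_{n_0+r}$ with $\deg_t\le n_0+r-2$, while each $\tau_{n_i}$ contributes $\deg_t\le n_i-1$. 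The product has $t$-degree bounded by the telescoping
\[
(n_0+r-2)+\sum_{i=1}^r(n_i-1)=n_0+r-2+(n-n_0)-r=n-2,
\]
and its dependence on the $F_{jk}$'s is through indices with $j+k\le n-2$. Collecting, $\tau_{n-1}$ is a polynomial in $(F_{jk})_{j+k\le n-1}$ with $\deg_t(\tau_{n-1})\le n-2=(n-1)-1$, closing the induction.

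There is no serious analytic obstacle: the only work is bookkeeping. The delicate points are the two inequalities $n_0+r\le n-1$ and the telescoping identity $(n_0+r-2)+\sum(n_i-1)=n-2$, which together are precisely what makes the bound $\deg_t(\tau_n)\le n-1$ sharp enough to survive one more step of the recursion. Once these are verified the rest is a routine propagation of the polynomial and $t$-degree bounds coming from Proposition~\ref{degL}.
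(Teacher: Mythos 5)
Your proof is correct and follows essentially the same route as the paper: induction on $n$ with base case $\tau_2=-\Lambda_3$, then propagating the bounds of Proposition~\ref{degL} through each of the three families of summands in the recursion~\eqref{5}. The paper's own proof is just a terser version of exactly this bookkeeping.
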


\begin{proof}
  We prove (i) and~(ii) by induction on $n\ge2$.
  For $n=2$, this results from $\tau_2 =  -\Lambda_3=
  -\sum_{r+s=3} L_{rs}^*$ and Proposition~\ref{degL}.

For $n\ge3$, we use~\eqref{5} to write~$\tau_n$ in the form of a sum
of three terms, and we observe that each of them satisfies~(i) and~(ii).
%
  %
\end{proof}

\medskip


\noindent  {\bf  Polynomial dependence of the coefficients
  of the balanced admissible series} 
\medskip

\begin{proposition}\label{degLF}
For $F=\la z + \sum_{j+k\ge 2}F_{jk}z^j\bar z^k\in \Fom$, consider the $F$-admissible pair $(L_F,\Ga_F)$
  where~$L_F$ is the balanced $F$-admissible series defined by
  Proposition~\ref{unique}
  and use the notation
  \[
    L_F(z) = z\bar z + \sum_{r+s\ge3} L_{rs} z^r\bar z^s,
    \qquad
    \Ga_F(R) = R + \sum_{n\ge2} \Ga_n R^n.
    \]   

  \noindent (i)
    For each $r,s\ge0$ such that $r+s\ge3$ and $n\ge2$,
    $L_{rs}$ is a polynomial function of $(F_{jk})_{j+k<r+s}$
    and $\Ga_n$ is a polynomial function of $(F_{jk})_{j+k<2n}$.
    Moreover,
    \beglab{eqdominantF}
    r\neq s \enspace\Longrightarrow\enspace
    (1-\la^{r-s}) L_{rs} = \la \ov F_{s,r-1}+\bar\la F_{r,s-1}+G_{rs},
    \edla
    where $G_{rs}$ is a polynomial function of $(F_{jk})_{j+k\le
      r+s-2}$.  

    \medskip
    
 \noindent (ii)
    For any affine one-parameter family $F_t=(1-t)F_0+tF_1$ in~$\Fom$,
    the corresponding families $(L_{F_t})$ and $(\Ga_{F_t})$ are
    regular-polynomial, with 
    \begla
    \deg_t(L_{rs}) \le r+s-2, \qquad \deg_t(\Ga_n)\le 2n-2.
    \edla   
\end{proposition}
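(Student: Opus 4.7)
The plan is to mimic the induction argument used in Proposition~\ref{degL}, with the following modification: for the balanced series~$L_F$, the off-diagonal coefficients $L_{rs}$ ($r \ne s$) are still determined recursively by the conjugation equation $L_F \circ F = \Ga_F \circ L_F$, exactly as in the proof of Lemma~\ref{couple}, while the diagonal coefficients $L_{nn}$ are no longer set to zero but are fixed by the balancing condition $L_F(z,z) = -z\,\tau_F(z)$ of Proposition~\ref{unique}. Expanding this condition gives, for each $n \ge 2$,
\[
L_{nn} = -\tau_{2n-1} - \sum_{r+s = 2n,\, r \ne s} L_{rs},
\]
where Proposition~\ref{degtaun} already guarantees that $\tau_{2n-1}$ is a polynomial in $(F_{jk})_{j+k \le 2n-1}$, with $\deg_t \tau_{2n-1} \le 2n-2$ in any affine family.

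I would then prove statements~(i) and~(ii) simultaneously by induction on $N \defeq r+s$ (with $2n$ playing the role of $N$ for $\Ga_n$ and for the diagonal $L_{nn}$), combining the recursion from Lemma~\ref{couple} with the balancing formula above. At rank~$N$, the off-diagonal $L_{rs}$ are obtained via Lemma~\ref{lemrevisitgrs} as $(1-\la^{r-s})^{-1}\bigl(\la \ov F_{s,r-1} + \bar\la F_{r,s-1} + G_{rs}\bigr)$, and $\Ga_n$ (for $N = 2n$) as $\la \ov F_{n,n-1} + \bar\la F_{n,n-1} + G_{nn}$; the terms $G_{rs}$ and $G_{nn}$ from~\eqref{eqgrssumofthree} depend only on data of rank strictly less than~$N$. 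The polynomial dependence and the degree bounds for these off-diagonal coefficients follow from exactly the same three-sum analysis carried out in the proof of Proposition~\ref{degL}. The diagonal $L_{nn}$ is then obtained from the balancing formula, which combines quantities all known by induction (or by Proposition~\ref{degtaun}) to be polynomial in $(F_{jk})_{j+k \le 2n-1}$ and of degree at most $2n-2$ in~$t$.

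The dominant formula~\eqref{eqdominantF} is immediate from the decomposition of $A_{rs}$ supplied by Lemma~\ref{lemrevisitgrs}, once the induction has shown that $G_{rs}$ is polynomial in $(F_{jk})_{j+k \le r+s-2}$. The main (indeed essentially only) technical obstacle is the degree bookkeeping inside~\eqref{eqgrssumofthree}: in the second sum $\Ga_{n+c}\, L_{r_1 s_1} \cdots L_{r_c s_c}$ the inductive bounds give $\deg_t \le 2(n+c)-2 + \sum (r_i+s_i - 2) = r+s-2-n \le r+s-2$, while in the third sum the constraints $j_\al + k_\al \ge 2$ and $p_\be + q_\be \ge 2$ precisely absorb the extra factors of~$t$ contributed by the~$F$'s, yielding $\deg_t \le (\ell+m+a+b-2) + (a+b) \le r+s-2$. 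Consistency of freely prescribing $L_{nn}$ at rank $N = 2n$ with the off-diagonal recursion poses no issue, since it is guaranteed by the existence half of Proposition~\ref{unique}.
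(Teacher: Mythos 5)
Your proof is correct and follows essentially the same route as the paper's: repeat the induction of Proposition~\ref{degL} verbatim for the off-diagonal coefficients and for~$\Ga_n$, and replace the prescription $L_{nn}^*=0$ by the balancing relation $L_{nn}=-\tau_{2n-1}-\sum_{r+s=2n,\,r\neq s}L_{rs}$ combined with Proposition~\ref{degtaun} (your index $\tau_{2n-1}$ is indeed the right one, since the coefficient of $z^{2n}$ in $-z\,\tau_F(z)$ is $-\tau_{2n-1}$). One cosmetic slip: in the second sum the intermediate total is $2(n+c)-2+\sum(r_i+s_i-2)=r+s-2$ exactly, not $r+s-2-n$, but the bound $\le r+s-2$ that you actually use is correct.
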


\begin{proof}
  Follow the lines of the proof of Proposition~\ref{degL}.
  The only difference is that,
  after dealing with the case $r\neq s$ (by the same kind of
  computations, based on the second part of~\eqref{eqLrsst}),
  one handles the case $r=s=n$ (which entails that~$N$ is even) by
  replacing the first part of~\eqref{eqLrsst} with the formula
\[
  L_{nn} = -\tau_{n-1} - \sum_{r+s=2n\atop r\neq s} L_{rs}
\]
and making use of Proposition~\ref{degtaun}. 
\end{proof}


\begin{remark}
  Similarly,
  for any regular-polynomial family $(\rho_t)$ in $R^2\R[[R]]$, the
  family $(L_t)$ of $F$-admissible series obtained from
  Corollary~\ref{determined} is regular-polynomial.
\end{remark}

\subsection{Proof of Theorem~\ref{main}}\label{secpfthmmain}
Proposition~\ref{degLF} shows that the Ilyashenko - P\'erez-Marco
alternative holds true in our setting:
we can apply Theorem~\ref{thmIPM} with $K(F)=L_F$ and
  $\mathcal{F}=\Fomcv$ or $\FomJcv$.
It only remains to show that in each case, there exists at least one
example~$F$ such that~$L_F$ is divergent.
%

\medskip

{\bf Proof of (i)}. Corollary~\ref{CorGenecand}
  with $J(z)=\la z$ ($N=1$)
  yields a non-integrable area-preserving $F\in\Fomcv$,
  whose balanced admissible series~$L_F$ is necessarily divergent
  (otherwise, it would be an analytic non-constant first integral
  of~$F$ by Proposition~\ref{integ}(ii)).
  Then, the IPM alternative entails that the balanced admissible
  series of the generic element of~$\Fomcv$ is divergent.


 \medskip

 {\bf Proof of (i')}. We first remark that both area-preserving maps
and holomorphic maps in $\Fomcv$ belongs to the set $\{\Gamma=\ID\}$.
To show that the generic~$F$ in $\Fomcv\cap \{\Gamma\neq \ID\}$ has
divergent~$L_F$, in view of~(i), it is enough to show that $\Fomcv\cap
\{\Gamma\neq \ID\}$ is open and dense in~$\Fomcv$.
 This follows easily from the fact that, by~\eqref{dominant},
  the conjugacy equation $L_F\circ F = \Ga_F\circ L_F$ implies
  $\Ga_n = A_{nn}$ and, according to Lemma~\ref{lemrevisitgrs}, a
  perturbation of the coefficient $F_{n,n-1}$
%
will perturb the coefficient~$\Ga_n$.  

\medskip

{\bf Proof of (ii)}.
Assuming that~$\om$ is super-Liouville, we will 
construct $F\in\Fomcv$ with prescribed $N$-jet~$J$ such that
$L_F=\sum L_{rs}z^r\bar z^s$ is divergent
%
by mimicking Siegel's method (see \cite{SM} page~189),
exploiting~\eqref{eqdominantF}.

Without loss of generality we assume $\De=1$.
Let~$(n_p)$ denote an increasing sequence of natural numbers such that
$|\lambda^{n_p}-1|^{-1}\geq n_p!$.
We construct the coefficients $F_{rs}$ by induction on $r+s$.
We take them as prescribed by~$J$ for $r+s\le N$, 
%
%
%
and for $r+s>N$ we choose $F_{rs}=0$ except for $(r,s)$ of the form
$(1,n_p)$ or $(n_p+1,0)$ with $n_p\ge N$, in which case the value will
be~$\pm1$.
We choose $F_{1,n_p}$ and $F_{n_p+1,0}$ by writing~\eqref{eqdominantF}, 
\[(1-\lambda^{n_p})L_{n_p+1,1}=\lambda
  \ov{F}_{1,n_p}+\bar{\lambda}F_{n_p+1,0}+G_{n_p+1,1},\]
where $G_{n_p+1,1}$ is determined by coefficients~$F_{jk}$ with
$j+k\le n_p$ and thus already chosen,
%
%
%
and arranging for the inequality
\begin{equation}
  |\lambda \ov{F}_{1,n_p}+\bar{\lambda}F_{n_p+1,0}+G_{n_p+1,1}|
  \ge 2 |\Re \la|
\end{equation}
to hold.
For that, 
  we may take $F_{1,n_p} = F_{n_p+1,0} = 1$ if
  $(\Re\la)\Re(G_{n_p+1,1})\ge0$ and $F_{1,n_p} = F_{n_p+1,0} = -1$
  otherwise.
  %

   With such~$F$, we get $|L_{n_p+1,1}|\geq 2 n_p! |\cos(2\pi\om)|$ for
   all~$p$ large enough, therefore~$L_F$ is divergent;
   by the IPM alternative, the balanced admissible
  series of the generic element of~$\FomJcv$ is divergent.

\medskip

{\bf Proof of (iii)}.
Assuming that $\omega$ is non-Bruno, and given
$J(z)=\lambda z+\sum_{2\leq k\leq N} J_kz^k$, we just need to
construct a non-linearizable~$F$ holomorphic for~$z$ in the
  disc $\{ |z|<\De\}$ with $N$-jet equal to~$J$.
%
%
  We will make use of the polynomial example of Corollary~\ref{CorGe} that we
  have deduced from Geyer's result.

  Since~$\om$ is irrational, there exists a tangent-to-identity
  polynomial $\Phi(z)=z+\sum \Phi_j z^j$ such that
\beglab{eqlinJN}
  J^*(z) \defeq \Phi\circ J\circ\Phi\ii=\la z+O(z^{N+1}).
  %
  %
\edla
%
%
%
  Consider $F^*(z) \defeq \la z+\la z^{N+1}$,
  $F\defeq\Phi\ii\circ F^*\circ \Phi$
  and let~$\De_0$ denote the radius of convergence of~$F$.
By Corollary~\ref{CorGe}, $F^*$ is not linearizable, hence neither is~$F$,
which implies that, for any $\De\le\De_0$,~$F$ can be viewed as an
element of~$\Fomcv$ for which the unique invariant foliation~$\cF_F$ is
not analytic (by Proposition~\ref{integ}(i), as explained at the
beginning of Section~\ref{secClassEx}(2)) and~$L_F$ is divergent.

  On the other hand,
   one easily checks that when two maps coincide modulo $O(z^{N+1})$, as~$F^*$ and~$J^*$ do,
  the same is true for their compositions to the right by~$\Phi$ or to
  the left by~$\Phi\ii$,
  thus $F=\Phi\ii\circ F^*\circ \Phi$ and $J=\Phi\ii\circ J^*\circ \Phi$ coincide modulo $O(z^{N+1})$.
%
%
    We thus have found, for each $\De\le\De_0$, an $F\in\FomJcv$
with divergent~$L_F$;
   the IPM alternative entails that the balanced admissible
  series of the generic element of~$\FomJcv$ is divergent.


\medskip

{\bf Proof of (iv)}.    For any such prescribed $N$-jet~$J$,
Corollary~\ref{CorGenecand} allows us to find a non-integrable
  area-preserving map~$F$ in~$\FomJcv$, thus with~$L_F$ divergent,
and the IPM alternative then entails that the balanced admissible
  series of the generic element of~$\FomJcv$ is divergent.

\medskip


\subsection{Generic divergence of the foliation involution~$\tau_F$}
\label{sectaud}

We have seen in Lemma~\ref{Fodd} that the involution~$\tau_F$ is
  automatically convergent in a case for which we will later show the
  generic divergence of all admissible series.
%
%
Nevertheless, we can also prove

\begin{ourthm}   \label{thmtaud}
  Assume that $\omega$  satisfies the following ``odd super-Liouville condition'':
  \begin{equation}  \label{eqoddsuperL}
    \text{there exist infinitely many odd}\; k\in\Zp \;\text{such that}\;
    \bigl|\lambda^k-1\bigr|^{-1}\ge k!
\end{equation}
(where $\la=e^{2\pi i\omega}$ as usual).
Then, for any $\De>0$, the generic~$F$ in~$\Fomcv$ has a divergent~$\tau_F$.
\end{ourthm}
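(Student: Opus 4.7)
The plan is to apply the Ilyashenko--P\'erez-Marco alternative to $K(F) = \tau_F$ on $\cF = \Fomcv$. By Proposition~\ref{degtaun}(ii), along any affine line $F_t = (1-t)F_0 + tF_1$ in $\Fomcv$ the family of involutions $(\tau_{F_t})$ is regular-polynomial, so Theorem~\ref{thmIPM} applies; it therefore suffices to exhibit a single $F \in \Fomcv$ with $\tau_F$ divergent. To construct one, I would mimic the Siegel-type argument used in the proof of Theorem~\ref{main}(ii), but now targeting the Taylor coefficients of $\tau_F$ rather than those of $L_F$. The structural input is the recursion~\eqref{5}, in which $\epsilon_n = 1$ precisely when $n$ is odd: for such $n$, $\tau_{n-1} = -\Lambda_n + R_n$, where $R_n$ is a polynomial in the $\Lambda_j, \tau_k$ for $j, k < n$ (hence, by Proposition~\ref{degL}, ultimately in the $F_{jk}$ with $j + k \le n - 2$), whereas $\Lambda_n = \sum_{r+s=n} L^*_{rs}$ depends additionally, but linearly, on the $F_{jk}$ with $j + k = n - 1$. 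This is why the odd super-Liouville hypothesis~\eqref{eqoddsuperL} is needed: it furnishes an increasing sequence $(n_p)$ of \emph{odd} naturals with $|1 - \lambda^{n_p}|^{-1} \geq n_p!$, giving precisely the indices at which~\eqref{5} permits a small-denominator amplification of $\tau_{n_p - 1}$.

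Concretely I would set $F_{jk} = 0$ except along the sparse support $\{(0, n_p - 1) : p \geq 1\}$ and pick $F_{0, n_p - 1}$ by induction on $p$. Inspection of Lemma~\ref{lemrevisitgrs} shows that only $L^*_{n_p, 0}$ and its conjugate $L^*_{0, n_p}$ depend on $F_{0, n_p - 1}$ at step~$p$ (the polynomials $G^*_{rs}$ for $r + s = n_p$ involve only $F_{jk}$ with $j + k \le n_p - 2$), and this dependence reads
\[
L^*_{n_p, 0} + L^*_{0, n_p} = \frac{\lambda \bar F - \lambda^{n_p - 1} F}{1 - \lambda^{n_p}} + c_p, \qquad F = F_{0, n_p - 1},
\]
where $c_p$ depends only on the previously fixed coefficients. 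Fixing $|F| = r_p$ and choosing the phase of~$F$ so that $\lambda \bar F$ and $\lambda^{n_p - 1} F$ are antiparallel makes the numerator of modulus $2 r_p$; the sign-flip $F \leftrightarrow -F$ then produces two candidate values of $\tau_{n_p - 1}$ differing by $4 r_p / |1 - \lambda^{n_p}|$, and one of them must satisfy
\[
|\tau_{n_p - 1}| \;\ge\; \frac{2 r_p}{|1 - \lambda^{n_p}|} \;\ge\; 2\, r_p\, n_p!.
\]

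To ensure $F \in \Fomcv$ I would take $r_p \defeq (2\Delta)^{-(n_p - 1)}$, so that $\sum_p |F_{0, n_p - 1}|\, \rho^{n_p - 1}$ converges for every $\rho < 2\Delta$ and the complex extension of~$F$ is holomorphic on~$B_\Delta$. With this choice $|\tau_{n_p - 1}|^{1/(n_p - 1)} \ge (2 n_p!)^{1/(n_p - 1)} / (2\Delta) \to \infty$, so $\tau_F$ has radius of convergence~$0$, and the IPM alternative then delivers the asserted generic divergence on $\Fomcv$. The main point to verify carefully will be that $R_{n_p}$ and the $G^*_{rs}$ at step~$p$ are truly independent of $F_{0, n_p - 1}$, so that the sign-flip isolates exactly the small-denominator contribution---this is precisely what the strict degree-counting in Propositions~\ref{degL} and~\ref{degtaun} guarantees.
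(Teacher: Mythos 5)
Your proposal follows essentially the same route as the paper: the Ilyashenko--P\'erez-Marco alternative applied to $F\mapsto\tau_F$ via Proposition~\ref{degtaun}, combined with a Siegel--Moser-type example exploiting the small denominator $1-\lambda^{n_p}$ at odd~$n_p$ through the recursion~\eqref{5}. The paper's example places coefficients $\pm i$ at the positions $(1,n_p)$ and $(n_p+1,0)$ and amplifies $\tau_{n_p+1}$ via $\Lambda_{n_p+2}$, whereas you place a scaled coefficient at $(0,n_p-1)$ and amplify $\tau_{n_p-1}$ via $\Lambda_{n_p}$; both isolate the same dominant linear term of Lemma~\ref{lemrevisitgrs}, and your phase/sign dichotomy is sound.
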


An adaptation of the proof of the density of the super-Liouville
set~\eqref{eqdefsuperL} will show that the smaller ``odd
super-Liouville set'' defined by~\eqref{eqoddsuperL} is dense in~$\R$
too:

\begin{proposition}  \label{propoddsuperLdense}
  The set of odd super-Liouville numbers, \ie the irrational $\om$'s
  satisfying~\eqref{eqoddsuperL}, is dense in~$\R$.
\end{proposition}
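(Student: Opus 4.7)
The plan is to adapt the classical Siegel-Moser density argument for super-Liouville numbers~\cite{SM}, with the extra constraint that the denominators appearing in the approximating rationals be kept odd at every step. Fix $\alpha\in\R$ and $\eps>0$; the goal is to exhibit $\omega$ irrational in $(\alpha-\eps,\alpha+\eps)$ satisfying~\eqref{eqoddsuperL}. Since $|\lambda^k-1|\le 2\pi|k\omega-p|$ for any integer~$p$, it suffices to produce infinitely many odd~$k$ admitting $p\in\Z$ with $|k\omega-p|\le \frac{1}{2\pi\,k!}$.

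First, I would build $\omega$ as the limit of a fast-converging sequence of fractions $p_n/q_n$, constructed inductively so that every $q_n$ is odd. Start by picking an odd integer $q_1>2/\eps$ and an integer $p_1$ such that $|p_1/q_1-\alpha|<1/q_1<\eps/2$. Then, assuming $p_n/q_n$ has been defined with $q_n$ odd, set $q_{n+1}\defeq q_n m_n$ and $p_{n+1}\defeq p_n m_n+1$, where $m_n$ is an odd integer chosen so large that $q_{n+1}\ge 4\pi q_n\cdot q_n!$ and $1/q_{n+1}<2^{-n-1}\eps$; the denominator $q_{n+1}$ is odd as a product of two odd integers.

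Next, I would define $\omega\defeq\lim_{n\to\infty} p_n/q_n = p_1/q_1+\sum_{n\ge1}1/q_{n+1}$, which lies in $(\alpha-\eps,\alpha+\eps)$ by the summability condition. The telescoping estimate $|\omega - p_n/q_n|\le \sum_{k\ge n}1/q_{k+1}\le 2/q_{n+1}$, combined with $q_{n+1}\ge 4\pi q_n\cdot q_n!$, yields $|q_n\omega-p_n|\le \frac{1}{2\pi\,q_n!}$, so each odd $k=q_n$ witnesses condition~\eqref{eqoddsuperL}. Irrationality of $\omega$ follows because otherwise, writing $\omega=a/b$, we would eventually have $|q_n\omega - p_n|<1/b$, forcing $b p_n=a q_n$ for all large~$n$, hence $p_n/q_n$ stationary, contradicting the strict increment $+1/q_{n+1}\neq 0$ at every step.

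The only real subtlety---minor but conceptually essential---is the parity-preservation trick: the classical Siegel-Moser telescoping naturally involves factorial-type denominators which are even, so the key idea is to multiply by large odd integers $m_n$ at each inductive step. This keeps us in the odd sublattice while leaving enough freedom to make the denominators grow as fast as needed; the rest is a standard Liouville-type construction.
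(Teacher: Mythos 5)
Your proof is correct, and it takes a genuinely different route from the paper's. The paper adapts the Siegel--Moser continued-fraction construction: it reduces~\eqref{eqoddsuperL} to $\operatorname{dist}(n\om,\Z)\le\frac{1}{7n!}$ for infinitely many odd~$n$, keeps the first~$\ell$ partial quotients of the target~$\alpha$, and then sets $r_k=7q_{k-1}!+\epsilon_k$ with $\epsilon_k\in\{0,1\}$ chosen so that each denominator $q_k=r_kq_{k-1}+q_{k-2}$ is odd --- the parity adjustment being possible because two consecutive denominators of convergents are never both even (a consequence of $q_kp_{k-1}-p_kq_{k-1}=(-1)^k$). You instead build a Liouville-type series $\om=p_1/q_1+\sum 1/q_{n+1}$ directly, enforcing oddness multiplicatively via $q_{n+1}=q_nm_n$ with $m_n$ odd; the telescoping estimate $|q_n\om-p_n|\le 2q_n/q_{n+1}\le\frac{1}{2\pi\,q_n!}$ together with $|\lambda^k-1|\le 2\pi|k\om-p|$ then gives the condition at each odd $k=q_n$, and your irrationality argument (the strict increments $1/q_{n+1}$ rule out an eventually stationary sequence) is sound. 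Your version is more elementary and self-contained, needing no facts about continued fractions; the paper's version stays closer to the original Siegel--Moser proof of the density of the (unrestricted) super-Liouville set and produces the witnesses as genuine convergents, though nothing in the application requires that extra structure. Both are complete proofs.
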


\begin{proof}[Proof of Theorem~\ref{thmtaud}]
%
  Assume that $\omega$ is odd super-Liouville and pick an increasing
  %
  sequence of positive odd integers $(n_p)$ such that
  \beglab{ineqlanpnp}
  |1-\la^{n_p}|\le 1/n_p! \quad \text{for all $p\ge1$.}
  \edla
  %
  %
  We will construct an example of $F\in\Fomcv$ for which the foliation
  involution~$\tau_F$ is divergent. Without loss of generality we
  assume $\De=1$.
%
%
The coefficients~$F_{jk}$ of~$F$ will all be zero
except for $(j,k)$ of the form $(1,n_p)$ or $(n_p+1,0)$, in which case
we will choose the value to be~$\pm i$.

For arbitrary $F\in\Fom$, using the notations~\eqref{eqnotaLa} with
$\La(z)\defeq L_F^*(z,z)$,
the coefficients of~$\tau_F$ are related to those of~$F$
according to formula~\eqref{5}, which involves the coefficients
$\La_n = \sum_{r+s=n} L^*_{rs}$, $n\ge3$.
In particular, for $n\ge1$ odd, $\eps_{n+2}=1$ and
\beglab{eqdeftaunp}
  \tau_{n+1}=-\La_{n+2}+ H_{n+1}, \qquad H_{n+1} = \text{polynomial expression in
    $\tau_2,\ldots,\tau_n,\La_3, \ldots, \La_{n+1}$.} 
\edla
%
%
Recall that $\tau_{n+1}\in\R$ and, since $L_F\in\gL$, $\La_{n+2}\in\R$ too.
We will arrange for
\beglab{ineqaim}
|\tau_{n_p+1}| \ge 2 n_p! |\cos(2\pi\om)|
\edla
to hold for all~$p$ large enough
by choosing inductively the $F_{jk}$'s with $j+k\le n_p+1$.

For $2\le j+k\le n_1$, we choose $F_{jk}\defeq 0$.
Suppose now that $p\ge1$ and the coefficients $F_{jk}$, $2\le j+k\le n_p$, have already
been chosen.
Proposition~\ref{degL} and its proof (especially~\eqref{eqLrsst}) show
that
\begla
\La_{n_p+2} =
\sum_{r+s = n_p+2}
\frac{\la \ov F_{s,r-1}+\bar\la F_{r,s-1}}{1-\la^{r-s}}
+ K_p,
\qquad K_p \defeq \sum_{r+s = n_p+2}
\frac{G_{rs}^*}{1-\la^{r-s}},
\edla
where~$K_p$ is determined by the $F_{jk}$'s, $j+k\le n_p$.
The sum over $(r,s)$ in the \rhs\ of the equation giving~$\La_{n_p+2}$ is real, due to the
symmetry $(r,s)\mapsto (s,r)$,
thus $K_p \in \R$ too and
\begla
\La_{n_p+2} = 
2\Re\bigg( \sum_{r+s = n_p+2\atop r>s}
\frac{\la \ov F_{s,r-1}+\bar\la F_{r,s-1}}{1-\la^{r-s}}
\bigg)
+ K_p.
\edla
We choose $F_{jk}\defeq0$ if
$j+k=n_p+1$ and $(j,k)\notin\{(1,n_p),(n_p+1,0)\}$, thus the
terms with $(r,s)\neq(n_p+1,1)$ do not contribute and
\begla
\La_{n_p+2} =
2\Re\Big( 
\frac{\la \ov F_{1,n_p}+\bar\la F_{n_p+1,0}}{1-\la^{n_p}}
\Big)
+ K_p.
\edla
Inserting this in~\eqref{eqdeftaunp}, we get
%
%
\begla
\tau_{n_p+1} = 2\Re\Big( 
\frac{\la \ov F_{1,n_p}+\bar\la F_{n_p+1,0}}{\xi_p}
\Big)
+ I_p
\quad\text{with} \ens
\xi_p \defeq \la^{n_p}-1
\edla
and $I_p \defeq H_{n_p+1}-K_p\in\R$ is determined by the $F_{jk}$'s, $j+k\le n_p$.
%
%

%
%
  

We set $F_{1,n_p} = -i u_p$ and $F_{n_{p+1},0} = i u_p$ with~$u_p$ to be chosen in
$\{-1,+1\}$, so
\begla
\tau_{n_p+1} = 4 u_p (\Re\la)\Re(i/\xi_p) + I_p.
\edla
%
%
Recall that $|\xi_p|\le 1/n_p!$ by~\eqref{ineqlanpnp}. Observing that
$1 = |\la^{n_p}|^2 = |\xi_p+1|^2 = |\xi_p|^2 +2\Re\xi_p + 1$,
whence
\begla
\Re\xi_p = -\tfrac12|\xi_p|^2, \quad
\Im\xi_p = v_p |\xi_p| \big(1-\tfrac14|\xi_p|^2)^{1/2}
\ens \text{with $v_p=\pm1$,}
\edla
we get
\begla
\tau_{n_p+1} = 4 u_p v_p (\Re\la)|\xi_p|\ii\big(1-\tfrac14|\xi_p|^2)^{1/2}  + I_p
\edla
and we choose
\begla
u_p\defeq v_p \ens\text{if}\ens (\Re\la)I_p\ge0,
\qquad
u_p\defeq -v_p \ens\text{otherwise.}
\edla
This way we always have
\begla
|\tau_{n_p+1}| \ge 4 (\Re\la)|\xi_p|\ii\big(1-\tfrac14|\xi_p|^2)^{1/2},
\edla
whence~\eqref{ineqaim} follows for~$p$ large enough
and, for~$F$ thus constructed, $\tau_F$ is divergent.

We now conclude by the IPM alternative, thanks to
Proposition~\ref{degtaun}, that the foliation involution of the
generic element of~$\Fomcv$ is divergent.
  %
%
\end{proof}


\begin{proof}[Proof of Proposition~\ref{propoddsuperLdense}]
We show that the set of odd super-Liouville numbers~\eqref{eqoddsuperL} is dense in~$\R$
by modifying Siegel's proof \cite[pp.~189--190]{SM} that the set of
super-Liouville numbers~\eqref{eqdefsuperL} is dense.

Set $\lambda=e^{2\pi i \omega}$. For each $n\in\Z_{>0}$, choose
$m_{\om,n}\in\Z$ so that $\theta \defeq |n\omega-m_{\om,n}|\leq 1/2$,
then
\[
  |1-\lambda^n|=|1-e^{2\pi i n\omega}|=|e^{\pi i n\omega}-e^{-\pi i
    n\omega}|=2|\sin(\pi n\omega)|=2\sin(\pi|n\omega-m_{\om,n}|).
\]
Since $\theta\leq 1/2$, it follows that $2\theta\leq \sin(\pi \theta)\leq \pi \theta$ and therefore
\[4\theta\leq |1-\lambda^n|\leq 2\pi \theta\leq 7\theta.\]
Consequently, it is enough to show that the set irrational numbers $\omega\in(0,1)$ with 
\begin{equation}\label{ome}
  %
  \operatorname{dist}(n\omega,\Z)
  \leq \frac{1}{7n!}
  \quad \text{for infinitely many odd~$n$}
\end{equation}
is dense in~$(0,1)$ (and thus in~$\R$, by $\Z$-periodicity). 
For each irrational number $0<\omega<1$, consider its continued
fraction representation: one can associate a unique sequence of
natural numbers $r_k, k\geq1$, called partial quotients of $\omega$,
so that the sequence of fractions $p_k/q_k$, $k\geq 0$, recursively
defined according to the prescription
\begin{equation}
\left\{
\begin{aligned}
&&p_0=1,\quad q_0=1,\quad p_1=1,\quad q_1=r_1,\\
&&p_k=r_kp_{k-1}+p_{k-2},\quad q_k=r_kq_{k-1}+q_{k-2},\quad k\geq 2\\
\end{aligned}
\right.
\end{equation}
converges to $\omega$.  Moreover, from the theory of continued
fractions (see e.g.\ \cite{HW}), one has
\begin{equation}\label{omegacf1}
q_kp_{k-1}-p_kq_{k-1}=(-1)^k,\quad k\geq 2.
\end{equation}
\begin{equation}\label{omef}
|q_k\omega-p_k|< \frac{1}{q_{k+1}}<\frac{1}{r_{k+1}q_k}\leq \frac{1}{r_{k+1}}.
\end{equation}
From  (\ref{omegacf1}), and that $p_k, q_k$ are relatively prime, one deduces that for each $k\geq 2$, $q_k$ and $q_{k-1}$ could not both be even.
Conversely, corresponding to each such sequence $\{r_k\}$, there is an irrational number $\omega$  in the interval $(0,1)$ with $\{r_k\}$ as its partial quotients in its continued fraction representation.

Now let $\alpha$ be an arbitrary irrational number in $(0,1)$ with $\{s_k\}$ the partial quotients in its continued fraction representation.

For $\ell$ an arbitrary fixed natural number, one defines 
\[
  r_k\defeq s_k \quad (0<k\leq \ell),
  %
%
  \qquad r_k \defeq 7q_{k-1}!+\epsilon_k \quad (k>\ell),
  \qquad q_{k} \defeq r_kq_{k-1}+q_{k-2} \ens\text{for all~$k$,}
\]
%
where the $\epsilon_k$'s are chosen to be~$1$ or~
$0$ so that~$q_k$ is odd. 
This is possible since $q_{k-1}$ and $q_{k-2}$ can not both be even.
Denoting by~$\om$ the irrational number determined by these
partial quotients~$r_k$, we have
\[|\alpha-\omega|\leq |\alpha-\frac{p_{\ell}}{q_{\ell}}|+|\omega_\ell-\frac{p_{\ell}}{q_{\ell}}|\leq 2q_{\ell}^{-2}\leq 2\ell^{-2}\]
while, on the other hand, by~\eqref{omef},
there are infinitely many 
odd $n=q_k$ that satisfy~\eqref{ome}.
Since~$\ell$ can be chosen arbitrarily large, the corresponding
numbers $\om_\ell=\om$ tend to~$\omega$. This shows
that the set of odd super-Liouville numbers is dense in the unit
inverval.
\end{proof}

\smallskip

\subsection{On general involutions $\tau$}

In Section \ref{realaxis} the involution $\tau_F$ was defined by
looking at the intersection of the foliation with the real axis
$\{\, (z,w)=(z,z) \mid z\in\R \,\}$. More generally, we could have replaced this axis by any
real analytic curve $\mathcal{C}$ in the $z$ plane, image of an
analytic map 
\[
  c \col (\R,0)\to (\C,0)=(\R^2,0), \quad
  u\mapsto c(u)=\sum_{j\ge 1}c_ju^j,\quad c_1\not=0\in\C,
\]
and defined an involution~$\tau$ related to the curve $\cal C$ as follows:
defining the one-variable formal series $\Lambda=\Lambda_{\cal C}$ by
\[
  \Lambda_{\cal C}(u)\defeq L(c(u),\ov c(u)),
  \quad\text{where}\ens \ov c(u)=\sum_{j\ge 1}\ov{c_j} \, u^j,
  %
  \]
and checking that $\Lambda_{\cal C}$ has real coefficients, one defines 
$\tau(u)=\tau_{F,\cal C}(u)=-u+O(u^2)$ as the unique formal
involution different from Identity such that $\Lambda\circ \tau=\Lambda$. 
Precisely, $\tau$ can
be defined explicitly by $\tau=\ell^{-1}\circ \sigma\circ \ell$, where
$\ell(u)=\Lambda^{1/2}(u)=(c_1\ov c_1)^{1/2}u+O(u^2)$ is any square root of $\Lambda(u)$.
Exactly as in Section \ref{realaxis} such $\tau$ has real coefficients and depends
only on $F$ and $\cal C$,
whence there exists a balanced solution
$L_{F,{\cal C}}$, depending only on $F$ and~$\cal C$, such that
\[L_{F,{\cal C}}(c(u),\ov c(u))=-u\tau_{F,\cal C}(u),\]
whose divergence implies divergence of all other $F$-admissible solutions $L$.
\smallskip

\smallskip

\medskip
\noindent {\bf Remark.} Although the convergence or divergence of the balanced solution $L_{F,{\cal C}}$ does not depend on the curve $\cal C$,  the convergence or divergence of the involution $\tau_{F,\cal C}$ may depend on the choice of $\cal C$. In particular, if the formal curve ${\cal C}=\{(c(u),\ov {c}(u))\}$ is such that $c(-u)=-c(u)$ and $L(c(u),\ov{c}(u))=
L(-c(u),-\ov{c}(u))$, this implies that 
$\tau_{F,\cal C}(u)=-u$. Such a symmetric formal curve is not in general unique but, as a formal consequence of the uniqueness of the invariant foliation of $F$, any such curve has the same property with respect to any solution $L$ of equation \eqref{conj} and a balanced solution $L_F$ is obtained by requiring that $L_F(c(u),\ov{c}(u))=u^2$. 


\begin{center}
\hskip0.5cm
\includegraphics[scale=.21]{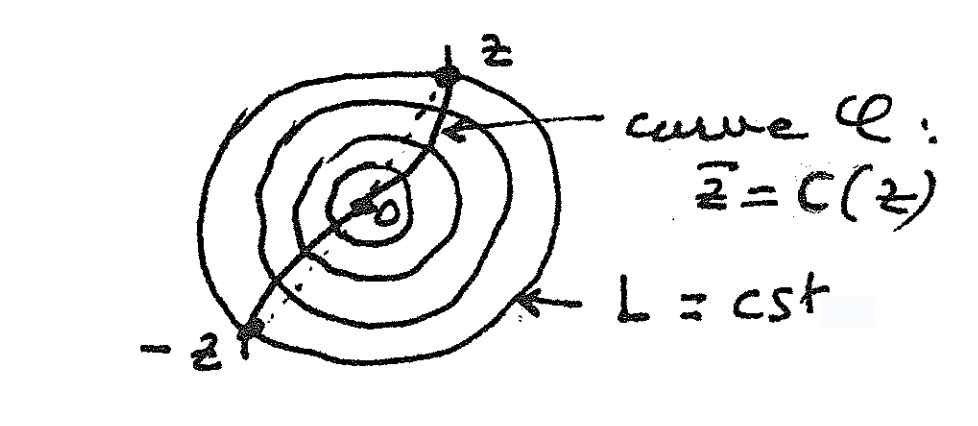}
\end{center}

%
The next lemma shows the role played by a general curve $\cal C$  in
making a link between the involutions of two formal foliations in
$\gL$. 


\begin{lemma}\label{compare}
For any two $L, L'\in \gL$, the equation $L'(u,u)=L(c(u),\ov{c}(u))$
has a  solution  $c(u)=\sum_{i\geq 1}c_iu^i$, $c_1\neq 0$. As a
consequence,  for any two $F,F'\in\gF$, there exists a curve ${\cal
  C}=\{(c(u),\ov{c}(u))\}$ such that $\tau_{F'}=\tau_{F,\cal C}$;
in particular, there exists a formal curve $\cal C$ such that
    $\tau_{F,{\cal C}}=\sigma$ for given $F\in\gF$.
\end{lemma}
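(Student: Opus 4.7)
The plan is to convert the two-variable equation $L'(u,u) = L(c(u), \ov{c}(u))$ into a one-variable identity by restricting to solutions with real coefficients, $c(u) \in \R[[u]]$. Then $\ov{c}(u) = c(u)$ and the equation reduces to $L' \circ \iota = (L \circ \iota) \circ c$ in the notation of Lemma~\ref{lemLdeterminesGa}. Writing $L \circ \iota = \ell^2$ and $L' \circ \iota = (\ell')^2$ for the unique tangent-to-identity square roots in $\gG$ furnished by that lemma, the equation becomes $(\ell')^2 = (\ell \circ c)^2$. Since both $\ell'$ and $\ell \circ c$ must be tangent-to-identity, the sign is forced and I would simply set $c \defeq \ell^{-1} \circ \ell'$, which is an element of $\gG$ and so in particular a real formal series whose first-order coefficient is $1 \neq 0$. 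This establishes the first assertion.

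For the first consequence, given $F, F' \in \gF$, I would pick admissible series $L, L'$ for $F, F'$ respectively and apply the first part to produce a formal series $c(u)$ satisfying $L(c(u), \ov{c}(u)) = L'(u,u)$. The formal curve $\mathcal{C} = \{(c(u), \ov{c}(u))\}$ then satisfies $\Lambda_{\mathcal{C}}(u) = L(c(u), \ov{c}(u)) = L'(u,u)$, so the unique non-identity involutions of the two univariate series coincide, yielding $\tau_{F, \mathcal{C}} = \tau_{F'}$. For the final statement about $\tau_{F, \mathcal{C}} = \sig$, I would specialize to $L' \defeq \nu$, so that $L'(u,u) = u^2$ is even; the corresponding $c(u)$ then yields $\Lambda_{\mathcal{C}}(u) = u^2$, whence $\tau_{F, \mathcal{C}}(u) = -u = \sig(u)$ by the characterisation of the involution in Lemma~\ref{lemdefinvol}.

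The entire argument is a direct formal power series manipulation that reuses the univariate square-root construction already performed in Lemma~\ref{lemLdeterminesGa}, so I do not anticipate any substantial obstacle. The only subtle point is the sign choice in passing from the equation of squares to the equation between tangent-to-identity square roots, but this is unambiguous because $\ell$ and $\ell'$ are, by construction, elements of $\gG$ with linear part~$u$.
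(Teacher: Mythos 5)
Your proposal is correct, but it takes a genuinely different route from the paper's. The paper solves $L'(u,u)=L(c(u),\ov{c}(u))$ by induction on the degree with a \emph{complex} unknown $c$: at degree $n$ the equation reads $c_{n-1}\ov{c_1}+c_1\ov{c_{n-1}}+A_n(c_1,\dots,c_{n-2})=a_n$, a real-linear condition on $2\Re(c_{n-1}\ov{c_1})$ whose right-hand side is real (both sides of the original equation lie in $\R[[u]]$ because $L_{ij}=\ov{L_{ji}}$ and $a_n\in\R$), so $c_{n-1}$ exists but is far from unique. You instead restrict to \emph{real} $c$, which collapses the equation to the univariate identity $(\ell')^2=(\ell\circ c)^2$ via the square roots $\ell,\ell'\in\gG$ of $L\circ\iota$ and $L'\circ\iota$ furnished by Lemma~\ref{lemLdeterminesGa}, and you exhibit the closed-form solution $c\defeq\ell\ii\circ\ell'$. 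This is cleaner and makes the consequences immediate ($\Lambda_{\mathcal C}=L'\circ\iota$ on the nose, and $L'=\nu$ gives $\tau_{F,\mathcal C}=\sigma$), at the price of hiding the non-uniqueness that the paper's recursion displays (a free imaginary part of $c_{n-1}\ov{c_1}$ at every order; in your normalization only the sign $\ell\circ c=\pm\ell'$ remains). Geometrically, your curves $\{(c(u),c(u))\}$ are all reparametrizations of the real axis, so $\tau_{F,\mathcal C}=c\ii\circ\tau_F\circ c$ and you are really showing that this conjugacy class exhausts all involutions $-u+O(u^2)$ (consistently with Remark~\ref{allconj}), which is all the lemma requires. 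One harmless caveat you share with the paper: the statement says $F,F'\in\gF$, but admissible series are only guaranteed for elements of $\Fom$, so both arguments tacitly assume that.
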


\begin{proof}  We first remark that if $L\in\gL$, then $L(c(z),\ov{c}(w))\in \gL$.
Let $L(z,w)=zw+\sum_{i+j\geq 3}L_{ij}z^iw^j$, and write $L'(z,z)=z^2+\sum_{k\geq 3} a_kz^3$. The equation $L'(u,u)=L(c(u),\ov{c}(u))$ is expressed as
\[u^2+\sum_{k\geq 3}a_ku^k=(\sum_{k\geq 1}c_ku^k)(\sum_{k\geq 1}\overline{c_k}u^k)+\sum_{i+j\geq 3} L_{ij}(\sum_{k\geq 1}c_ku^k)^i(\sum_{k\geq 1}\overline{c_k}u^k)^j\]
By inductions on the degrees of monomials on both sides of the equation, the coefficients $c_k$ can be solved recursively: at degree $n=2$,  $c_1\overline{c_1}=1$; at degree $n$, 
\[c_{n-1}\overline{c_1}+c_1\overline{c_{n-1}}+A_n(c_1,\cdots,c_{n-2})=a_n,\]
where $A_n$ is a function only depends on the former coefficients.
Notice that the solution is not unique.


\end{proof}

%
When $F$ and $F'$ are conjugate, a curve $\cal C$ as in Lemma
\ref{compare} is just the image of the real axis by the conjugacy :
%

\begin{center}
\hskip0.5cm
\includegraphics[scale=.65]{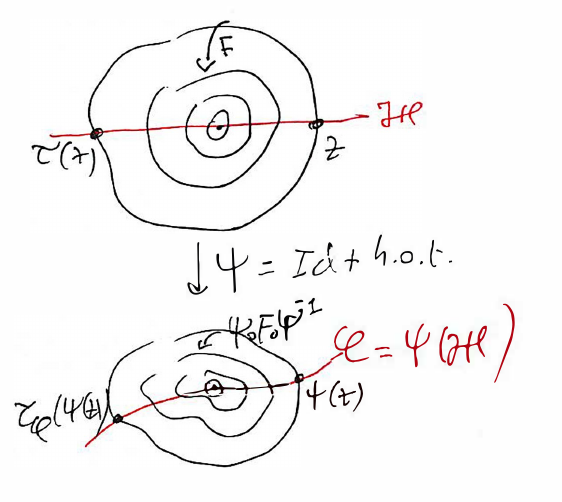}
\end{center}

%
%
%
%
%

\begin{corollary}\label{corcomp} Let $F_1=\Psi^{-1}\circ F\circ \Psi$, where $\Psi$ is a local analytic diffeomorphism tangent to identity, then 
\[\tau_{F_1}=\tau_{F,\cal C},\quad \text{where}\quad  {\cal C}=\{(\Psi(u,u),\tilde{\Psi}(u,u))\}.\]
\end{corollary}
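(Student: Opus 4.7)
The plan is to unfold definitions: start with any $F$-admissible series $L$, transport it via right-composition with $\Psi$ to an $F_1$-admissible series, and match its restriction to the diagonal with $\Lambda_{F,\mathcal{C}}$.

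Fix $L \in \gL$ admissible for $F$ with associated $\Gamma \in \gG$, i.e.\ $L\circ F = \Gamma\circ L$. Then
$$(L\circ\Psi)\circ F_1 = L\circ\Psi\circ\Psi^{-1}\circ F\circ\Psi = L\circ F\circ\Psi = \Gamma\circ(L\circ\Psi),$$
so $L_1 \defeq L\circ\Psi \in \gL$ is $F_1$-admissible (this is the admissibility-level content of the naturality statement $\cF_{F_1}=\cF_F\circ\Psi$ of Proposition~\ref{uniqueL}). By Lemma~\ref{lemdefinvol}(ii), $\tau_{F_1}$ is then the unique formal involution of the form $-u+O(u^2)$ such that $L_1(\tau_{F_1}(u),\tau_{F_1}(u))=L_1(u,u)$, where both sides are evaluated via the complex extension of~$L_1$.

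Second, the complex extension of $L\circ\Psi$ reads
$$\widehat{L\circ\Psi}(z,w) = \hat L\bigl(\hat\Psi(z,w),\tilde\Psi(z,w)\bigr),$$
since in $L\circ\Psi(z)=L\bigl(\Psi(z),\overline{\Psi(z)}\bigr)$ the conjugate factor $\overline{\Psi(z)}$ has complex extension $\tilde\Psi$ by the convention of notation~\eqref{notatifzw}. Restricting to $w=z$ yields
$$L_1(u,u) = L\bigl(\Psi(u,u),\tilde\Psi(u,u)\bigr) = L(c(u),\bar c(u)) = \Lambda_{F,\mathcal{C}}(u),$$
with $c(u)\defeq\Psi(u,u)$, so that $\mathcal{C}=\{(c(u),\bar c(u))\}=\{(\Psi(u,u),\tilde\Psi(u,u))\}$ is exactly the curve in the statement. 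Consequently $\tau_{F_1}$ is the unique involution $-u+O(u^2)$ fixing $\Lambda_{F,\mathcal{C}}$, which by definition is $\tau_{F,\mathcal{C}}$, and we conclude $\tau_{F_1}=\tau_{F,\mathcal{C}}$.

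The argument is essentially definitional: the only two things to verify are that right-composition with $\Psi$ transports $F$-admissibility to $F_1$-admissibility (a one-line computation above) and that complex extension commutes with such composition under the substitution $\bar z \leftrightarrow w$ with the $\widetilde{\phantom{.}}$-convention of~\eqref{notatifzw}; so there is no genuine obstacle beyond careful bookkeeping.
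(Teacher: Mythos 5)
Your proposal is correct and follows essentially the same route as the paper: transport an $F$-admissible $L$ to the $F_1$-admissible $L\circ\Psi$ and observe that its diagonal restriction is exactly $\Lambda_{\mathcal C}(u)=L\bigl(\Psi(u,u),\tilde\Psi(u,u)\bigr)$, so the two involutions are defined by the same univariate series. The extra details you supply (the one-line admissibility computation and the check that $\ov c(u)=\tilde\Psi(u,u)$ under the convention~\eqref{notatifzw}) are exactly what the paper leaves implicit.
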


\begin{proof}
  Let $L$ be an admissible solution for $F$, then $L'=L\circ\Psi$ is
  an admissible solution for $F_1$, in particular
  \[L'(z,z)=(L\circ \Psi)(z,z)=L(\Psi(z,z),\tilde{\Psi}(z,z)).\]
\end{proof}

\section{The case of symmetric dynamical systems} \label{secodddiv}

In this section, we focus on the set of odd formal diffeomorphisms
\begin{equation}
  \Fomodd \defeq \{F\in \Fom \mid F(-z)=-F(z)\},
\end{equation}
whose expansions $F(z) = \la z + \sum_{j+k\ge2} F_{jk} z^j \bar z^k$
are characterized by the property
\begla
j+k\;\text{even} \enspace\Longrightarrow\enspace
F_{jk}=0,
\edla
and we prove Theorem~\ref{thmB} for the case $F \in \Fomoddcv
= \Fomodd \cap \Fomcv$.

\smallskip 

Suppose first that $F\in\Fomodd$.
As observed in Lemma~\ref{Fodd}, all the $F$-admissible series~$L$ are
even.
It is straightforward that the formal diffeomorphism~$\Phi$
constructed in the proof of Lemma~\ref{Phi}(i) so that
$L=\nu\circ\Phi$ is odd.
We thus have, by Lemma~\ref{exist}, at least one odd formal geometric
normalization for~$F$.
However note that, since according to Lemma~\ref{Phi}(ii) formal geometric
normalizations may differ by any factor $e^{2\pi i \beta(z)}$, not all
of them are odd.
This is in contrast with the case of formal normalizations:


\begin{lemma} \label{odd}
  If $F\in \Fomodd$, then all its formal normalizations are odd. In
  particular, if~$F$ is in addition area-preserving, then any
  formal area-preserving conjugacy to its Birkhoff normal form is odd.
\end{lemma}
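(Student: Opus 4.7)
The plan is to exploit the involution $\sig\defeq-\ID$ acting on formal diffeomorphisms by conjugation. Fix a formal normalization $\Phi$ of $F$, so that $\Phi\circ F=N\circ\Phi$ for some formal normal form $N(\zeta)=\la\zeta\bigl(1+\sum_{k\ge1} c_k|\zeta|^{2k}\bigr)$, and set
\[
\Phi_-\defeq \sig\circ\Phi\circ\sig.
\]
Since $N$ is a sum of odd-degree monomials, $N\circ\sig=\sig\circ N$; combined with the hypothesis $F\circ\sig=\sig\circ F$, this yields
\[
\Phi_-\circ F=\sig\Phi\sig\circ F=\sig\Phi\circ F\sig=\sig N\Phi\sig=N\sig\Phi\sig=N\circ\Phi_-,
\]
so $\Phi_-$ is another formal normalization of $F$ with \emph{exactly the same} normal form $N$.

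I would then pass to $\Psi\defeq\Phi_-\circ\Phi^{-1}$, which is tangent-to-identity and commutes with $N$, so $\Psi\circ N\circ\Psi^{-1}=N$ and $\Psi$ is itself a formal normalization of the geometric normal form $N$. Regarding $\Psi$ and $\ID$ as two formal normalizations of $N$, the strengthened statement from \cite{CSSW} recalled in the Remark following Lemma~\ref{preserve} gives
\[
\Psi(z)=z\bigl(1+a(|z|^2)\bigr)e^{2\pi i b(|z|^2)}
\]
for some real univariate formal series $a(R)$ and $b(R)$ without constant term.

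The argument then closes by computing $\sig\Psi\sig$ in two ways. The explicit form above, together with $|z|^2=|-z|^2$, gives $\sig\Psi\sig(z)=-\Psi(-z)=\Psi(z)$. On the other hand, from $\Phi_-=\sig\Phi\sig$ and $\sig^2=\ID$,
\[
\sig\Psi\sig=(\sig\Phi_-\sig)\circ(\sig\Phi^{-1}\sig)=\Phi\circ\Phi_-^{-1}=\Psi^{-1}.
\]
Equating yields $\Psi^2=\ID$, and a tangent-to-identity formal diffeomorphism satisfying $\Psi^2=\ID$ must equal $\ID$: if $\psi_N$ denotes the lowest-order nonzero jet of $\Psi-\ID$, then $\Psi^2=\ID+2\psi_N+O(z^{N+1})$, forcing $\psi_N=0$. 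Hence $\Psi=\ID$, $\Phi_-=\Phi$, and $\Phi$ is odd. The area-preserving statement then follows at once: a formal area-preserving conjugacy to the Birkhoff normal form is, \emph{a fortiori}, a formal normalization of $F$, hence odd by the first part.

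The delicate point is the invocation of the strengthened statement from \cite{CSSW}, in which $b$ is a series in $|z|^2$ only, not a general real series in $(z,\bar z)$: with the weaker form $\Psi(z)=z(1+\alpha(|z|^2))e^{2\pi i\beta(z)}$ for a two-variable real $\beta$ (as in Lemma~\ref{cssw3}), the identity $\beta(-z)=\beta(z)$ would fail in general and the key cancellation $\sig\Psi\sig=\Psi$ would break down.
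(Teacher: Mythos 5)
Your proof is correct, but it takes a genuinely different route from the paper's. The paper first \emph{constructs} one odd normalization: it picks an odd geometric normalization $\Phi$ of $F$ (which exists because every $F$-admissible $L$ is even, by Lemma~\ref{Fodd}, so the Morse-lemma construction of Lemma~\ref{Phi}(i) yields an odd $\Phi$), observes that the unique \emph{basic} normalization $\Phi^*$ of $G=\Phi\circ F\circ\Phi\ii$ is odd by uniqueness, and then transfers oddness to every other normalization $H\circ\Phi^*\circ\Phi$ via Lemma~3 of \cite{CSSW}. You instead take an \emph{arbitrary} normalization $\Phi$ and show directly that it coincides with its $\sig$-conjugate $\Phi_-=\sig\circ\Phi\circ\sig$: since $N\circ\sig=\sig\circ N$ and $F\circ\sig=\sig\circ F$, the map $\Phi_-$ is a normalization with the same normal form, the quotient $\Psi=\Phi_-\circ\Phi\ii$ is simultaneously $\sig$-invariant (by the explicit form $z(1+a(|z|^2))e^{2\pi i b(|z|^2)}$) and equal to $\Psi\ii$ (by pure algebra), and a tangent-to-identity involution is the identity. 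Both arguments hinge on exactly the same external input, namely the strengthened form of Lemma~3/Corollary~1 of \cite{CSSW} with \emph{univariate} $a$ and $b$ (as quoted in the Remark following Lemma~\ref{preserve}); you are right that the weaker statement of Lemma~\ref{cssw3}, with a two-variable $\beta$, would not suffice, and the paper's proof needs the same strengthening for its claim that ``any further normalization $H$ of $N$ is odd.'' What your approach buys is economy: it bypasses the basic normalization, its uniqueness, Lemma~\ref{Fodd}, and the explicit odd Morse-lemma construction, replacing them with the elementary observation that a tangent-to-identity formal involution is trivial. The paper's approach, on the other hand, produces an explicit odd normalization along the way, which it reuses elsewhere (e.g.\ in the proof of Theorem~\ref{area2}).
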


\begin{proof}
Pick an odd geometric normalization~$\Phi$, for instance by using
Lemma~\ref{Phi}(i) as indicated above, and apply to
$G \defeq \Phi\circ F\circ \Phi^{-1}$ the theory of \cite{CSSW}:
it is straightforward that the basic normalization~$\Phi^*$ of~$G$ is
odd (due to its uniqueness).
We thus get an odd normal form
$N \defeq \Phi^*\circ\Phi\circ F \circ(\Phi^*\circ\Phi)^{-1}$.
Now, by Lemma~3 of \cite{CSSW}, any further normalization~$H$ of~$N$
is odd.
This yields the conclusion since all formal normalizations of~$F$ are
of the form $H\circ \Phi^*\circ \Phi$.
%
%
%
\end{proof}

\subsection{Adaptation of Genecand's theorem to odd area-preserving maps}

To show generic divergence of geometric normalizations in the class
$\Fomoddcv$ for any irrational number $\omega$, we first need an
adaptation of Genecand's Theorem~\ref{area} to this special class.
Recall that, given an analytic area-preserving $F_0\in\Fom$ and an
integer $N\ge 4$, the space $S(F_0,N)$ of local analytic
area-preserving maps with same $(N-1)$-jet as~$F_0$ is parametrised by
generating functions $u(x,y')$ according to \eqref{eqdefgenu}--\eqref{eqdefugen} and given the
product topology induced by the Taylor coefficients of~$u$.
We now assume that~$F_0$ is odd and consider the closed subspace
\begla
\Sodd(F_0,N) \defeq S(F_0,N)\cap \Fomodd.
\edla

\begin{theorem} \label{area2}
If~$F_0$ has non-trivial Birkhoff normal form at order~$N-1$, then a
generic $F\in \Sodd(F_0,N)$ has non-degenerate heteroclinic orbits and non-degenerate Mather
sets in every neighborhood of the origin,
which implies local non-integrability.
\end{theorem}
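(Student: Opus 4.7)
The plan is to follow Genecand's proof of Theorem~\ref{area}, which itself extended Zehnder's genericity argument, while restricting all perturbations to the subspace parametrizing odd maps. First I would verify that an area-preserving map $\Phi^u$ defined by \eqref{eqdefugen} commutes with $\sigma\defeq -\ID$ if and only if the generating function $u(x,y')$ is \emph{even}, i.e.\ $u(-x,-y')=u(x,y')$. Indeed, substituting $(-x,-y,-x',-y')$ in~\eqref{eqdefugen} and comparing with the original equations forces $\partial_x u$ and $\partial_{y'} u$ to be odd, hence $u$ to be even (up to an inessential constant). Consequently $\Sodd(F_0,N)$ is faithfully parametrized by the closed affine subspace of even generating functions with Taylor coefficients $u_{k\ell}=0$ for $k+\ell\le N$, endowed with the induced product topology.

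Next I would recall the two main ingredients in Genecand's argument and explain their odd counterparts. Under the hypothesis that $F_0$ has a non-trivial Birkhoff normal form at order $N-1$, one finds, for arbitrarily small $\rho>0$, a hyperbolic periodic orbit of $F_0$ at distance~$\sim\rho$ from the origin, arising from the Birkhoff-Lewis or Aubry-Mather mechanism. Since the whole construction commutes with~$\sigma$, such periodic orbits are either $\sigma$-invariant or come in $\sigma$-conjugate pairs $\{p,\sigma(p)\}$; in the latter (generic) case the invariant manifolds of~$p$ and~$\sigma(p)$ are interchanged by~$\sigma$, and the natural connection to consider is a \emph{heteroclinic} orbit between $p$ and $\sigma(p)$ rather than a homoclinic orbit at~$p$. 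This explains the replacement of homoclinic by heteroclinic in the statement.

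The heart of the proof is then to show that the subspace of even generating functions is rich enough to make, generically, the stable and unstable manifolds of such a pair meet transversely (non-degenerate heteroclinic intersection), and similarly that non-degenerate Mather sets exist near every scale. The classical genericity arguments use perturbations by generating functions~$v$ supported in small neighborhoods of a single point along the unperturbed separatrix in order to split the manifolds with controlled Melnikov-type integrals. In the odd setting one simply replaces~$v$ by its even symmetrization $\tfrac12\bigl(v(x,y')+v(-x,-y')\bigr)$, which lives in our perturbation space and still generates a non-trivial splitting, since the symmetrized perturbation acts simultaneously on the separatrices attached to~$p$ and to~$\sigma(p)$ and the two contributions do not cancel for a generic choice of support (they are attached to disjoint orbits, so their effects on the Melnikov integral add independently). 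The analogous symmetrization works for the Aubry-Mather construction, yielding non-degenerate symmetric Mather sets. Non-integrability then follows as in~\cite{G}: the accumulation of non-degenerate Mather sets towards~$0$ prohibits any non-constant analytic first integral.

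The main obstacle I expect is verifying that the even-perturbation space truly produces transverse splittings, i.e.\ that the symmetrization trick never accidentally kills the Melnikov function. For $\sigma$-conjugate pairs this is clear because the two contributions are supported on disjoint pieces of the separatrix web. The delicate case is that of a $\sigma$-invariant hyperbolic periodic orbit (necessarily of even period); there the symmetrization might enforce cancellations on half of the homoclinic points, but a careful bookkeeping of the induced $\Z/2$-equivariant splitting shows that transverse intersections persist generically on the complementary half. Modulo this equivariant version of the standard Melnikov-transversality lemma, the proof is then a direct translation of Genecand's into the odd category.
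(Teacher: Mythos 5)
Your overall strategy coincides with the paper's: parametrize $\Sodd(F_0,N)$ by even generating functions, run Genecand's variational construction, observe that periodic orbits come in $\sigma$-conjugate pairs so that homoclinic connections become heteroclinic ones between $(Q)$ and $(-Q)$, and symmetrize every perturbation $v\mapsto\tfrac12\bigl(v(x,y')+v(-x,-y')\bigr)$ so that it stays in the odd class (this symmetrization, combined with Stone--Weierstrass, is exactly the paper's final step).

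There is, however, one genuine gap. You correctly identify the delicate case of a $\sigma$-invariant minimal periodic orbit (which, as you note, must have even period), but you only dispose of it ``modulo an equivariant version of the standard Melnikov-transversality lemma,'' which you do not prove. The paper eliminates this case entirely, and with the very observation you already made: it simply selects the rational rotation number $p/q$ slightly above $\omega$ with $q$ \emph{odd}. Then $F^j(Q_0)=-Q_0$ would give $F^{2j}(Q_0)=Q_0$, hence $q\mid 2j$, hence $q\mid j$ and $F^j(Q_0)=Q_0$, a contradiction; so $(Q)$ and $(-Q)$ are always two disjoint orbits, the perturbing polynomial $P$ can be taken nonnegative, even, and vanishing to exactly first order at each $\pm Q_j$, and the monotone correspondence formula makes both orbits hyperbolic and the only minimal $p/q$ orbits. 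Since such odd-$q$ rationals exist at every scale, nothing is lost, and the equivariant Melnikov bookkeeping you defer to is never needed. A secondary, more minor omission: before setting up the twist coordinates one must check that the reduction of $F_0$ (hence $F$) to Birkhoff normal form at order $N-1$ can be performed by an \emph{odd} area-preserving change of variables, which the paper obtains from Lemma~\ref{odd} and the addendum to Proposition~\ref{propAreaP}; your proposal does not address this.
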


\begin{proof}
  Clearly,
\begla
\Sodd(F_0,N) = \big\{\,  F_0\circ \Phi^u \mid
u(x,y') = \sum_{k+\ell>N \atop k+\ell\,\text{even}} u_{k\ell} x^k y'^\ell \in\R\{x,y'\} \,\big\}.
\edla
In what follows, we adapt Genecand's proof of Theorem~\ref{area},
  indicating the modifications to be performed in the arguments found in~\cite{G}, but referring
  to~\cite{G} for the details.
The key observation is that, for odd area-preserving diffeomorphisms, periodic points come in pairs. 
%
%
%
We claim that:
\begin{quote}
  Given $F\in S=\Sodd(F_0;N)$, an open disk $D$ containing $0$
and an open neighbourhood $B$ of $F$ in $S$, there exists $F_1\in B$
with a transversal heteroclinic orbit
contained in~$D$.
\end{quote}

We first change coordinates.
According to Lemma~\ref{odd}, any formal area-preserving conjugacy
of~$F_0$ to its Birkhoff normal form is odd.
Truncating at order~$N-1$ and making use of the addendum to
Proposition~\ref{propAreaP}, we get a local analytic odd area-preserving
change of variable $z=x+iy \mapsto \ze=\xi+i\eta$ such that, in the
new coordinate,~$F_0$ and hence~$F$ itself are in Birkhoff normal form
at order $N-1$ and still odd:
\begla
F(\zeta)=\zeta \, e^{2\pi i \beta(|\zeta|^2)} + \cO_N 
\quad \text{with}\ens
\beta(R)=\om+b_1R+\cdots+ b_s R^s,
\quad 2s+1\le N-1.
\edla
%
%
For the sake of simplicity we assume $b_1>0$.


\medskip

1. In restriction to the annulus
$A_{\rho} \defeq \{ \rho \sqrt{1/2} \le |\zeta| \le \rho\sqrt{3/2} \}
\subset D$ with $\rho>0$ small enough,
we define new coordinates $(x,y) \in (\R/\Z) \times (-\frac12,\frac12)$ by
\[2\pi x =\arg \zeta,\quad \rho\sqrt{y+1}=|\zeta|.\]
The restriction of~$F$ to~$A_{\rho}$ takes the form of a twist map~$f_\rho$:
\[f_\rho:(x,y)\to (x',y'),\quad (x',y')=(x+a+by+\alpha(x,y),  y+\beta(x,y))\]
with $a\defeq \om+b_1\rho^2$, $b\defeq b_1\rho^2$ and
$C^1$-norm $|\al|_1+|\be|_1=O(\rho^3)$. 
Mather's theory guarantees the existence of $p/q$ minimal periodic
orbits of~$F$ localized in the annuli with median circle
$|\zeta|=\rho$ and breadth of order $O(\rho^{3/2})$ for any rational
$p/q$ slightly above~$\om$ (Proposition~3.6 in \cite{G}).
Since $F$ is odd, such periodic orbits come in pairs.

\medskip

2. We obtain \emph{hyperbolic} periodic orbits as follows.
Let $u(\xi,\eta')$ be the (even) generating function of~$F$, \ie
$F=F_0\circ\Phi^u$.
%
%
Suppose $(Q) = ( Q_j)_{0\leq j\leq q-1} = ((\xi_j,\eta_j))_{0\leq j\leq q-1}$ 
and~$(-Q)$ form a pair of minimal $p/q$ periodic orbits of~$F$;
choosing~$q$ odd, we ensure $F^j(Q_0)\neq-Q_0$ for any~$j$ (indeed,
the contrary would imply that
  $F^{2j}(Q_0)=Q_0$, whence $q\mid2j$, whence $q\mid j$, whence $F^j(Q_0)=Q_0$: contradiction)
and thus $(Q)$ and~$(-Q)$ are two distinct orbits.
We use the nonnegative even polynomial
\[
  P(\xi,\eta') \defeq (\xi^2+\eta'^2)^n
  \prod_{0\leq j\leq q-1}
  [(\xi-\xi_j)^2+(\eta'-\eta_j')^2]
  [(\xi+\xi_j)^2 + (\eta'+\eta_j')^2]
  %
  %
\]
with an integer power~$n$ no less than $N/2$ to perturb~$u$, and consider $F_0\circ \Phi^{u+sP}$
with $s>0$ small.
%
%
By the ``monotone correspondence formula'' between perturbations of the generating
function $u(\xi,\eta')$ of~$F$ and of the generating function
$h(x,x')$ of the twist map~$f_\rho$ \cite[Lemma~4.2]{G}, $(Q)$ and~$(-Q)$ become the
  only minimal $p/q$ periodic orbits for the perturbed map and they
  are hyperbolic
  (because $P\ge0$ has vanishing $1$-jet but not vanishing $2$-jet at
  each $\pm Q_j$; see the proof of Proposition~4.1 in~\cite{G}).

In the twist coordinates,
\[f_\rho(x+1/2,y)=f_\rho(x,y)+(1/2,0)
  \quad \text{in}\ens (\R/\Z)\times\R.
\]
Lifting the orbits~$(Q)$ and~$(-Q)$ to $(x,y)\in\R\times\Z$, we
  obtain minimal $p/q$ periodic states $(x)=(x_j)_{j\in\Z}$ and $(x')=(x_j+\frac12)_{j\in\Z}$ 
  (with $x_{j+q}=x_j+p$ for all $j\in \Z$)
  such that the translates $(x_{j-k}+\ell)_{j\in\Z}$ of~$(x)$,
  $k,\ell\in\Z$, and those of~$(x')$ are the only minimal $p/q$ periodic
  states.
  All these translates are necessarily isolated, and thus ordered
  in a sequence of neighboring minimal $p/q$ periodic states,
  between which there are asymptotic minimal states.
  Correspondingly, there exist heteroclinic orbits
connecting $( Q)$ and~$(-Q)$ (instead of homoclinic orbits as in~\cite{G}).

\medskip

3. If the heteroclinic orbits just obtained are not \emph{transversal}, then we apply a
further perturbation as follows.
Suppose $F$ has a nontransversal heteroclinic orbit
$(H)=(H_j)_{j\in\Z}$ asymptotic to $(Q)$ and $(-Q)$. Obviously, $(-H)$
is a nontransversal heteroclinic orbit asymptotic to $(-Q)$ and
$(Q)$. By the proof of Proposition~5.1 in~\cite{G}, one can perturb
the generating function~$u$ of~$F$ into $u+sw$ with~$w$ an even $C^\infty$
function supported in the the union of a neighbourhood of~$H_0$ and a
neighbourhood of~$-H_0$ (thus without changing the periodic orbits
$(Q)$ and $(-Q)$),
so that the odd diffeomorphism $F_0\circ\Phi^{u+sw}$ has a
transversal heteroclinic orbit $(H^s)$ asymptotic to the $p/q$
periodic orbits~$(Q)$ and~$(-Q)$.
%
%
Then the Stone-Weierstrass approximation theorem allows one to find a
suitable polynomial even perturbation
(note that if $w(\xi,\eta')$ is approximated by a polynomial
$P(\xi,\eta')$, then $w(\xi,\eta)=w(-\xi,-\eta')$ is also approximated by
$P^*\defeq P(-\xi,-\eta')$, and thus by $(P+P^*)/2$, which is even).
%
%
%
 \end{proof}



\begin{corollary}\label{CorGenecandodd}
  Let $N\ge1$. Given an odd polynomial
  $J(z)=\la z+\sum_{2\le j+k\le N}J_{jk}z^j\bar z^k$ that is the
  $N$-jet of a formal area-preserving map, for any $\De>0$ there
  exists $F\in\FomJcv$ odd, area-preserving and non-integrable.
\end{corollary}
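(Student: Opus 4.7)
The plan is to mirror the proof of Corollary~\ref{CorGenecand} while substituting its two main inputs with their odd analogues: Theorem~\ref{area2} in place of Theorem~\ref{area}, and the addendum to Proposition~\ref{propAreaP} in place of Proposition~\ref{propAreaP} itself. The strategy is to build an odd polynomial area-preserving model~$F_0$ that realizes an odd extension of~$J$ with non-trivial Birkhoff normal form, then apply the odd Genecand theorem to perturb it into a non-integrable odd area-preserving map that still has~$J$ as its $N$-jet.

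First I would extend the odd $N$-jet~$J$ to an odd jet~$J_+$ of a sufficiently high odd order~$N'\ge N$ such that~$J_+$ is the $N'$-jet of a formal odd area-preserving map whose Birkhoff normal form at order~$N'$ is non-trivial. Since the resonant monomials in the Birkhoff normal form are of the form $\zeta|\zeta|^{2k}$ (odd total degree $2k+1$), this can always be arranged by freely choosing a suitable top-degree odd homogeneous part of~$J_+$, without touching the prescribed jet~$J$. Then, using the addendum to Proposition~\ref{propAreaP}, I would produce an odd polynomial area-preserving map $F_0\in\Fomodd$ whose $N'$-jet equals~$J_+$.

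Next, setting $M\defeq\max\{N'+1,4\}$, I would apply Theorem~\ref{area2} to~$F_0$: a generic element of $\Sodd(F_0,M)$ is locally non-integrable. Any such element has the form $F_0\circ\Phi^u$ where the generating function~$u$ is even and supported in total degrees $>M$. As in the proof of Corollary~\ref{CorGenecand}, I would select such a~$u$ with coefficients satisfying $|u_{k\ell}|<\eps/k!\ell!$ and apply the contraction principle to the implicit equation~\eqref{eqdefugen} for~$y'$ in terms of $(x+iy,x-iy)\in B_\De$; for~$\eps$ small enough, the complex extension of~$\Phi^u$ is holomorphic on~$B_\De$, so $F_0\circ\Phi^u\in\FomJcv\cap\Fomoddcv$ and it inherits non-integrability from the genericity assertion.

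The step I expect to be delicate is confirming that the parity constraint does not destroy the genericity: one must verify that the sequence of perturbations used to obtain non-degenerate Mather sets and transversal heteroclinic orbits can always be carried out within the even class of generating functions. This is precisely the point of Theorem~\ref{area2}, whose proof carefully symmetrizes each polynomial perturbation (replacing $P$ by $(P+P^*)/2$), so the construction remains entirely inside $\Sodd(F_0,M)$ and the conclusion follows.
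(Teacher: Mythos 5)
Your proposal is correct and follows exactly the route the paper takes: its proof of Corollary~\ref{CorGenecandodd} is the one-line instruction ``adapt the proof of Corollary~\ref{CorGenecand}, using the addendum to Proposition~\ref{propAreaP},'' and your write-up is precisely that adaptation (odd extension of the jet with a non-trivial resonant top term, the odd version of Genecand's theorem, and the same contraction-principle check of holomorphy on~$B_\De$). The only detail worth noting is that you correctly extend to an \emph{odd} order~$N'$ rather than simply to~$N+1$, which is needed since all even-degree terms of an odd map vanish.
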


\begin{proof}
Adapt the proof of Corollary~\ref{CorGenecand}, using the addendum to
Proposition~\ref{propAreaP}.
\end{proof}
 

\subsection{Proof of Theorem \ref{thmB}}

Exactly as in the proof of Theorem~\ref{main}, it is enough to show
that in each case there exists an example of analytic map for which
the balanced admissible series is divergent; the IPM alternative then guarantees
the divergence of the balanced admissible series of the generic map.

\medskip

{\bf Proof of (i)}. Corollary~\ref{CorGenecandodd}
  with $J(z)=\la z$ ($N=1$)
  yields a non-integrable area-preserving $F\in\Fomoddcv$,
  whose balanced admissible series~$L_F$ is necessarily divergent
  (otherwise, it would be an analytic non-constant first integral
  of~$F$ by Proposition~\ref{integ}(ii)).

\medskip

{\bf Proof of (i')}
It is straightforward to replace $\Fomcv$ by $\Fomoddcv$
in the proof of Theorem~\ref{main}(i').

\medskip

{\bf Proof of (ii)}
We assume that $2\omega$ is super-Liouville and pick
a sequence of natural numbers $(n_p)$ such that
\[|1-(\lambda^2)^{n_p}|^{-1}\geq n_p!.\]
Given an arbitrary odd $J(z)=\lambda z+\sum_{2\le j+k\le N}J_{jk}z^j\bar z^k$,
we construct $F\in\Fomoddcv$ with $N$-jet~$J$ such that
$L_F=\sum L_{rs}z^r\bar z^s$ is divergent by modifying the proof of Theorem~\ref{main}(ii) as follows.

Without loss of generality we assume $\De=1$.
We construct the coefficients $F_{rs}$ by induction on $r+s$.
We take them as prescribed by~$J$ for $r+s\le N$, 
and for $r+s>N$ we choose $F_{rs}=0$ except for $(r,s)$ of the form
$(1,2n_p)$ or $(2n_p+1,0)$ with $n_p\ge N$, in which case the value will
be~$\pm1$.
Such~$F$ will obviously be odd.

We choose $F_{1,2n_p}$ and $F_{2n_p+1,0}$ by writing~\eqref{eqdominantF}, 
\[(1-\lambda^{2n_p})L_{2n_p+1,1}=\lambda \ov{F}_{1,2n_p}+\bar{\lambda}F_{2n_p+1,0}+G_{2n_p+1,1},\]
where $G_{2n_p+1,1}$ is determined by coefficients~$F_{jk}$ with
$j+k\le 2n_p$ and thus already chosen,
and arranging for the inequality
\begin{equation}
  |\lambda \ov{F}_{1,2n_p}+\bar{\lambda}F_{2n_p+1,0}+G_{2n_p+1,1}|
  \ge 2 |\Re \la|
\end{equation}
to hold.
For that, 
  we may take $F_{1,2n_p} = F_{2n_p+1,0} = 1$ if
  $(\Re\la)\Re(G_{2n_p+1,1})\ge0$ and $F_{1,2n_p} = F_{2n_p+1,0} = -1$
  otherwise.
   With such~$F$, we get $|L_{2n_p+1,1}|\geq 2 n_p! |\cos(2\pi\om)|$ for
   all~$p$ large enough, therefore~$L_F$ is divergent.

\medskip

{\bf Proof of (iii)}.
Assuming that $\omega$ is non-Bruno and given $J(z)=\lambda
z+\sum_{2\leq k\leq N}J_kz^k$ an odd $N$-jet,
we construct an odd map~$F$ that is holomorphic for~$z$ in the
  disc $\{ |z|<\De\}$, has $N$-jet equal to~$J$ and is not
  analytically linearizable.

Since~$J$ is odd, the unique tangent-to-identity formal linearization
of~$J$ is odd too. Truncating it far enough, we obtain an odd polynomial $\Phi(z)=z+\sum \Phi_jz^j$  such that 
\beglab{eqlinJNodd}
  J^*(z) \defeq \Phi\circ J\circ\Phi\ii=\la z+O(z^{2N+1}).
  \edla
   Consider $F^*(z) \defeq \la z+\la z^{2N+1}$,
  $F\defeq\Phi\ii\circ F^*\circ \Phi$
  and let~$\De_0$ denote the radius of convergence of~$F$.
By Corollary~\ref{CorGe}, $F^*$ is not linearizable, hence neither is~$F$,
which implies that, for any $\De\le\De_0$,~$F$ can be viewed as an
element of~$\Fomoddcv$ (both $\Phi$ and $F^*$ are odd) for which the
unique invariant foliation~$\cF_F$ is not analytic (by
Proposition~\ref{integ}(i)) and~$L_F$ is divergent.

One easily checks, as in the proof of Theorem \ref{main} (iii), that the $N$-jet of~$F$ is~$J$.

 \medskip

 {\bf Proof of (iv)}.
 Given an odd polynomial~$J$ that is the $N$-jet of a formal
 area-preserving map, 
Corollary~\ref{CorGenecandodd} allows us to find an odd non-integrable
  area-preserving map~$F$ in~$\FomJcv$, thus with~$L_F$ divergent.
 


\section{Some questions}   \label{secquest}


{\bf Question 1.} Construct an explicit diffeomorphism $$F(\zeta)=e^{2\pi i\omega}\zeta(1-|\zeta|^2)e^{\pi(\zeta-\bar\zeta)}+O(|\zeta|^4)$$
where $\lambda=e^{2\pi i\omega}$ with $\omega$ super-Liouville, or even non-Bruno, with no convergent geometric normalization.

\smallskip

 \noindent{\bf Question 2.} Is it possible to specify features  characterizing an $F$ all of whose geometric normalizations diverge? Are analogues of Birkhoff zones for translated orbits (see \cite{C}) a candidate for this in spite of their dependence on a choice of local coordinates?

\smallskip

 \noindent {\bf Question 3} What is true in case $\omega$ is a Bruno number? Does the existence of weak attraction (or repulsion) influence the existence of an analytic normalization? Recall that in \cite{CSSW} it is proved that for almost all values of $a\in\R$, no analytic normalization exists for the geometrically normalized local diffeomorphism $z\mapsto e^{2\pi\omega}z(1+a|z|^{2d})e^{\pi(z-\bar z)}$, that if $\omega$ is not a Bruno number this is the case for all $a$ while if $\omega$ is a Bruno number it is unknown whether there are exceptional values of $a$ for which an analytic normalization exists.  

\medskip

%
%

\noindent {\bf Question 5.} Given any integer $k$, does there exist a geometric normalization $\Phi$ of class $C^k$ defined in a neighborhood of 0 depending on $k$? This could be the starting point of an analogy with the setting of Hopf bifurcation where the invariant curves which bifurcate from the fixed point are of class $C^k$ with $k$ tending to $+\infty$ when the curve tends to the fixed point. The case $k=0$ is a consequence of \cite{S} which asserts that any two local contractions are topologically conjugate. 
\smallskip

\noindent {\bf Question 6.} Does a $C^\infty$ geometric normalization exist in case the two following conditions are satisfied?

1) $F$ is a local contraction;

2) $\lambda=e^{2\pi i\omega}$ with $\omega$ a Bruno number.
\smallskip

\noindent A tentative proof could use the Ecalle formalism of mould calculus (see \cite{FMS}).   
\smallskip

\noindent If true, such a result would fill a gap in our study: indeed, neither in \cite{CSSW} nor in the present paper did we use the effective existence of some weak attraction or repulsion in the main results on normalization, that is, the results hold as well in the conservative case where $F$ is formally conjugate to $G$ which preserves individually each circle centered at the fixed point (an exception is section 4.2 of \cite{CSSW}).
On the contrary, such existence plays a key role in the existence of
convergent (even polynomial) normal forms (Proposition 1 of
\cite{CSSW}).

\noindent {\bf Question 7.} {Generalization to higher dimensions?}

Let $F$ be a local diffeomorphisms of  $(\R^{2d},0)$ whose derivative $dF(0)$ has its spectrum
$\lambda_1,\bar \lambda_1\cdots, \lambda_d,\bar\lambda_d$ on the unit circle: $$\lambda_j=e^{2\pi \omega_j},\; j=1,\cdots, d.$$
If the spectrum is not resonant there exists a formal conjugacy to a formal normal form 
$$N(z_1,\cdots, z_d)=\left(\lambda_1z_1\bigl(1+f_1(|z_1|^2,\cdots,|z_d|^2)\bigr),\cdots, \lambda_dz_d\bigl(1+f_d(|z_1|^2,\cdots,|z_d|^2)\bigr)\right).$$  
A natural generalization to local diffeomorphisms of  $(\R^{2d},0)$ of the notion of (non-resonant) geometric normalization could be a conjugacy to a local diffeomorphism $G$ which preserves $d$ mutually transverse foliations of $\R^d$ whose intersections are $d$-dimensional tori.

\appendix
\section{ Complex extension}   \label{appA}


Given a formal series 
  \begin{equation}  \label{eqfCtoC}
    f \col \mathbb{C}\to \mathbb{C}, \qquad f(z)=\sum f_{k\ell}\,    z^k\bar{z}^\ell 
  \end{equation}
  we define its complex extension, which we still denote by~$f$ (with slight abuse of notation), by
  \begin{equation}  \label{eqfCtwotoC}
    f \col \mathbb{C}^2\to \mathbb{C}, \qquad f(z,w)=\sum
    f_{k\ell}\,z^kw^\ell.
  \end{equation}
  %
  %
  Of course, a formal series usually defines no function on~$\C$
  or~$\C^2$, but writing $f\col\C\to\C$ or $f\col\C^2\to\C$ is just a
  way of referring to limits of polynomial functions in the topology
  of formal convergence in $\C[[z,\bar z]]$ or $\C[[z,w]]$.
  For the conjugate formal series $\ov{f(z)}$, defined as the formal limit as $N\to\infty$ of the
  conjugate polynomial functions $\sum_{k+\ell\le N} \ov{f_{k\ell}} \,z^\ell\bar{z}^k$,
  and its complex extension, we use the notations
  \begin{equation} \label{notatifzw}
\ti{f}(z)
\defeq \ov{f(z)}=\sum \ov{f_{k\ell}} \,z^\ell\bar{z}^k,
\qquad
    \ti{f}(z,w) \defeq \sum \ov{f_{k\ell}} \,z^\ell w^k.
    \end{equation}
 The complex extension of $\nu(z)=|z|^2$ is $\nu(z,w)=zw$ and 
   \begin{equation} \label{eqnormfsq}
     |f|^2(z,w) = \nu\big(f(z,w),\tilde{f}(z,w)\big).
     \end{equation}
 
The extension of a formal diffeomorphism $F(z)=\lambda z+\sum_{k+\ell\geq
  2}F_{k\ell}\, z^k\bar{z}^{\ell} \in \Fom$ is defined as
\begin{equation}  \label{notahatF}
  \hat{F}:\C^2\to\C^2,\qquad \hat{F}(z,w)=(F(z,w),\tilde{F}(z,w)).
\end{equation}

The $\C$-linear part of the formal mapping $\hat F$ has two eigenvalues,
$\lambda$ and~$\bar{\lambda}$.
The normal forms~$\hat{N}$ of~$\hat F$ are of the form
\[
  \hat{N}(\xi,\eta)=\Big(\xi \sum\alpha_{2k}(\xi\eta)^k, \eta \sum
  \ov{\alpha}_{2k}(\xi\eta)^{2k}\Big)=:(N(\xi,\eta),\tilde{N}(\xi,\eta))
\]
%
%
A normalization that takes $\hat{F}$ to $\hat{N}$ is of the form 
\[(\xi,\eta)=\hat\Phi(z,w)=(\Phi(z,w),\tilde{\Phi}(z,w)),\]
where \[\Phi(z,w)=z+\sum_{k+l\geq 2}\Phi_{kl}z^kw^l.\]
The conjugacy equation
\[\hat\Phi\circ \hat{F}= \hat{N}\circ \hat\Phi,\]
is expressed as
\[(\Phi\circ F,\,\widetilde{\Phi\circ F})=(N\circ \Phi, \,\widetilde{N\circ \Phi}).\]
hence equivalent to 
\[\Phi\circ F=N\circ \Phi.\]

Similarly, if $\Phi$ is a formal geometric normalization of~$F$,
with $L\defeq \nu\circ\Phi$ and $\Ga\in\gG$ satisfying $L\circ
F=\Ga\circ L$,
then conjugacy by the complex extension~$\hat\Phi$ brings~$\hat F$ to the 
%
%
geometric normal form 
\begin{equation}\label{comG}
  \hat\Phi\circ \hat{F}\circ \hat\Phi^{-1}=
  (\Phi\circ F\circ \Phi^{-1},\widetilde{\Phi\circ F\circ \Phi^{-1}})
  =:(G,\tilde{G})=\hat{G}
\end{equation}
with $\nu\circ \hat{G}=\Gamma\circ \nu$, 
%
%
because
 \begin{equation}\label{conj1}
L\circ \hat{F}(z,w)=\Gamma\circ L(z,w). 
 \end{equation}
 %


  \section{On formal involutions in one variable}    \label{appforminvol}

We gather here a few elementary facts on formal and convergent
involutions, mainly inspired by [A.Bennett, Annals
1915] pp. 37-38.
%

Given a formal series without constant term, $\tau(x)\in x\C[[x]]$,
one can easily check that $\tau\circ\tau = \ID$ implies that either
$\tau=\ID$ or
\begin{equation}  \label{eqtauini}
\tau(x) = - x + O(x^2).
\end{equation}
In this appendix, we thus give ourselves $\tau(x)\in x\C[[x]]$ of the
form~\eqref{eqtauini}.
We set
\begin{equation}
  U_\tau \defeq \frac{\ID-\tau}{2} = x+O(x^2), \qquad
  V_\tau \defeq \frac{\ID+\tau}{2} = O(x^2).
\end{equation}
Notice that~$U_\tau$ may be considered as a complex formal
tangent-to-identity diffeomorphism.

\begin{lemma}   \label{leminv1}
  \begin{equation}
    \text{$\tau$ involution}
    \; \Longleftrightarrow\;
    \tau = U_\tau\ii \circ \sig \circ U_\tau
  \end{equation}
  where $\sig = -\ID$.
\end{lemma}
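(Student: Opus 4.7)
My plan is to reduce the claimed equivalence to the single elementary identity
\[
  U_\tau \circ \tau \;=\; \tfrac12(\tau - \tau\circ\tau),
\]
which follows immediately from the definition $U_\tau = \tfrac12(\mathrm{Id}-\tau)$ by postcomposing with $\tau$. The rest is essentially bookkeeping.

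First I would note that, since $\tau(x) = -x + O(x^2)$, we have $U_\tau(x) = x + O(x^2)$, so $U_\tau$ is a formal tangent-to-identity diffeomorphism and its compositional inverse $U_\tau^{-1}$ is well defined in $x\,\C[[x]]$. In particular, the conjugation equation $\tau = U_\tau^{-1}\circ\sigma\circ U_\tau$ is equivalent, by composing on the left with $U_\tau$, to
\[
  U_\tau \circ \tau \;=\; \sigma \circ U_\tau \;=\; -U_\tau.
\]

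For the ``$\Leftarrow$'' direction, I would simply compute
\[
  \tau\circ\tau \;=\; (U_\tau^{-1}\circ\sigma\circ U_\tau)\circ(U_\tau^{-1}\circ\sigma\circ U_\tau)
  \;=\; U_\tau^{-1}\circ\sigma^2\circ U_\tau \;=\; \mathrm{Id},
\]
using $\sigma^2=\mathrm{Id}$. For the ``$\Rightarrow$'' direction, assuming $\tau\circ\tau=\mathrm{Id}$, the displayed identity yields
\[
  U_\tau\circ\tau \;=\; \tfrac12(\tau-\mathrm{Id}) \;=\; -U_\tau \;=\; \sigma\circ U_\tau,
\]
and composing on the left with $U_\tau^{-1}$ gives the desired conjugacy. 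Conversely—though not strictly needed—the same identity shows that $U_\tau\circ\tau = -U_\tau$ forces $\tau-\tau\circ\tau = -(\mathrm{Id}-\tau)$, i.e.\ $\tau\circ\tau=\mathrm{Id}$, explaining why the formula actually characterizes involutions.

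There is no real obstacle here: the only point worth flagging is that one must work in the formal composition group to make sense of $U_\tau^{-1}$, which is legitimate precisely because the assumption $\tau(x)=-x+O(x^2)$ makes $U_\tau$ tangent to the identity. Everything else is a one-line computation.
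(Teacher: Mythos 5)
Your proof is correct and rests on exactly the same key identity as the paper's one-line argument, namely $U_\tau\circ\tau=\tfrac12(\tau-\tau\circ\tau)$, which turns the involution condition into $U_\tau\circ\tau=\sigma\circ U_\tau$ and hence into the conjugacy formula via the invertibility of the tangent-to-identity series $U_\tau$. The extra verification of ``$\Leftarrow$'' via $\sigma^2=\ID$ is fine but redundant, since, as you note yourself, the identity already gives the equivalence in both directions.
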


\begin{proof}
  $U_\tau \circ \tau = \frac{\tau-\tau\circ\tau}{2}$, hence
  $\tau$ involution $\,\Longleftrightarrow\, U_\tau\circ\tau = -U_\tau
  \,\Longleftrightarrow\, U_\tau\circ\tau = \sig\circ U_\tau$.
  \end{proof}

  \begin{corollary}
    $\tau$ convergent involution $\,\Longrightarrow\,$ $\tau$
    analytically conjugate to~$\sig$.
  \end{corollary}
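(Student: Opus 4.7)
The plan is to invoke Lemma~\ref{leminv1} directly: it already produces the explicit conjugation $\tau = U_\tau^{-1}\circ\sigma\circ U_\tau$, so the only thing left to verify is that convergence of~$\tau$ implies convergence of the conjugating map~$U_\tau$ and of its inverse. This is essentially automatic.

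First I would observe that $U_\tau = \frac{\operatorname{Id}-\tau}{2}$ is an $\R$-linear combination of the identity and~$\tau$, so if $\tau(x)\in x\C\{x\}$ is convergent in some neighborhood of~$0$, then $U_\tau$ is convergent in the same neighborhood. Next, from the normalization~\eqref{eqtauini} we have $U_\tau(x) = x+O(x^2)$, so $U_\tau$ is an analytic tangent-to-identity local diffeomorphism of $(\C,0)$. Hence its compositional inverse $U_\tau^{-1}$ exists as a convergent power series in a possibly smaller neighborhood of~$0$ (this is the classical analytic inverse function theorem, or equivalently the fact that inversion in the group of tangent-to-identity germs preserves convergence).

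Finally, plugging these convergent objects into the identity $\tau = U_\tau^{-1}\circ\sigma\circ U_\tau$ furnished by Lemma~\ref{leminv1} exhibits~$\tau$ as analytically conjugate to $\sigma=-\operatorname{Id}$, which is the desired conclusion. No step presents a real obstacle: the formal conjugacy was already established in Lemma~\ref{leminv1}, and the passage from formal to analytic is handled by the elementary stability of convergence under linear combinations and under compositional inversion of tangent-to-identity series.
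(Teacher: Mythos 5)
Your proof is correct and follows exactly the route the paper intends: the corollary is stated as an immediate consequence of Lemma~\ref{leminv1}, and you supply the (straightforward) verification that $U_\tau=\frac{\ID-\tau}{2}$ is a convergent tangent-to-identity germ whose compositional inverse is therefore convergent. Nothing is missing.
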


  \begin{lemma}   \label{leminv2}
    Suppose that~$\tau$ is a formal involution. Then
    \medskip

    (i)\; $V_\tau\circ\tau = V_\tau$,
    \medskip

    (ii) \; $E_\tau \defeq V_\tau \circ U_\tau\ii$ is $O(x^2)$ and even,
    \medskip

    (iii)\; $U_\tau\ii = \ID + E_\tau$.
  \end{lemma}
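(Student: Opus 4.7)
The plan is to exploit the two elementary identities $U_\tau+V_\tau=\ID$ and $V_\tau-U_\tau=\tau$, combined with the characterisation of involutions given by Lemma~\ref{leminv1}. No analytic input is needed: everything reduces to composition of formal series, and $U_\tau$ is a tangent-to-identity formal diffeomorphism so $U_\tau\ii$ exists in $\C[[x]]$.

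First I would dispatch (i) directly from the definition of $V_\tau$: since $V_\tau=\frac{\ID+\tau}{2}$, one has
\[
V_\tau\circ\tau \;=\; \tfrac12(\tau+\tau\circ\tau) \;=\; \tfrac12(\tau+\ID) \;=\; V_\tau,
\]
using only that $\tau$ is an involution.

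Next I would prove (iii) before (ii), since the shape of~$E_\tau$ will feed into (ii). Starting from the identity $V_\tau=\ID-U_\tau$, right-composing with $U_\tau\ii$ gives
\[
E_\tau \;=\; V_\tau\circ U_\tau\ii \;=\; U_\tau\ii - \ID,
\]
which is exactly $U_\tau\ii=\ID+E_\tau$. As a byproduct this displays $E_\tau$ as the ``nonlinear part'' of $U_\tau\ii$, which will make the order bound in (ii) transparent.

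Finally, for (ii), the vanishing to order~$2$ is immediate: $V_\tau=O(x^2)$ and $U_\tau\ii=x+O(x^2)$, so $E_\tau=V_\tau\circ U_\tau\ii=O(x^2)$ (equivalently, from (iii), $E_\tau=U_\tau\ii-\ID$, which vanishes to order~$2$ since $U_\tau\ii$ is tangent to the identity). For the evenness $E_\tau\circ\sig=E_\tau$, I would invoke Lemma~\ref{leminv1}, which rewrites $\tau=U_\tau\ii\circ\sig\circ U_\tau$; right-composing with $U_\tau\ii$ gives $\tau\circ U_\tau\ii=U_\tau\ii\circ\sig$, and then
\[
E_\tau\circ\sig \;=\; V_\tau\circ U_\tau\ii\circ\sig \;=\; V_\tau\circ\tau\circ U_\tau\ii \;=\; V_\tau\circ U_\tau\ii \;=\; E_\tau,
\]
where the penultimate step uses part~(i). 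Since the whole argument is bookkeeping with formal compositions, there is no genuine obstacle; the only point to be careful about is the order in which (i), (iii), (ii) are proved, so that (ii) can exploit both Lemma~\ref{leminv1} and part~(i).
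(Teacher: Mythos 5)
Your proof is correct and follows essentially the same route as the paper: part~(i) is immediate from $\tau\circ\tau=\ID$, the evenness of~$E_\tau$ comes from composing~(i) on the right by $U_\tau\ii$ and invoking Lemma~\ref{leminv1}, and part~(iii) is the identity $U_\tau+V_\tau=\ID$ read off after composition with $U_\tau\ii$ (the paper verifies the same fact by checking $(\ID+E_\tau)\circ U_\tau=\ID$ instead). The reordering (i), (iii), (ii) is harmless since nothing in your argument for~(ii) depends on~(iii).
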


  \begin{proof}
    (i): obvious.
    \medskip

\noindent
    (ii): Compose~(i) by $U_\tau\ii$:
    $V_\tau\circ U_\tau\ii = V_\tau\circ\tau\circ U_\tau\ii$, which is
    $V_\tau\circ U_\tau\ii\circ\sig$ by Lemma~\ref{leminv1}, \ie
    $E_\tau$ is $\sig$-invariant.

    More geometrically (if $\tau(x)$ is convergent and real), we are
    just saying that the graph $\{y=\tau(x)\}$ is symmetric with
    respect to $\{x=y\}$, hence the change of coordinates
    $(u,v)=\big(\frac{x-y}2, \frac{x+y}2\big)$ maps it to a graph that
    is symmetric  with
    respect to $\{u=0\}$.

    \medskip

\noindent
    (iii): We compute $(\ID+E_\tau)\circ U_\tau = U_\tau + E_\tau\circ
    U_\tau = U_\tau + V_\tau = \ID$.
\end{proof}

  \begin{corollary}
    If~$\tau$ is a formal involution, then %
    \[
      \tau =     (\ID+E_\tau)\circ\sig\circ(\ID+E_\tau)\ii =
      -(\ID-E_\tau)\circ(\ID+E_\tau)\ii
\;   \text{ with $E_\tau(x) = O(x^2)$ even.}
      \]
  \end{corollary}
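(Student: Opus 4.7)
The plan is to read off the two equalities directly by combining the two preceding lemmas, with one small manipulation for the second form.

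First, I would invoke Lemma~\ref{leminv1}, which gives $\tau = U_\tau^{-1}\circ\sig\circ U_\tau$ as soon as $\tau$ is an involution. Then I would apply Lemma~\ref{leminv2}(iii), namely $U_\tau^{-1} = \ID+E_\tau$; taking the inverse of both sides also yields $U_\tau = (\ID+E_\tau)^{-1}$. Substituting both into the identity from Lemma~\ref{leminv1} gives the first equality
\[
  \tau = (\ID+E_\tau)\circ\sig\circ(\ID+E_\tau)^{-1},
\]
and Lemma~\ref{leminv2}(ii) supplies the stated property that $E_\tau(x)=O(x^2)$ is even.

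For the second equality, the idea is to push $\sig$ through the rightmost factor using the evenness of $E_\tau$. Setting $g \defeq (\ID+E_\tau)^{-1}$, I would write
\[
  \sig\circ g = -g,
\]
so that
\[
  (\ID+E_\tau)\circ\sig\circ g = (\ID+E_\tau)\circ(-g) = -g + E_\tau(-g) = -g + E_\tau(g),
\]
the last step using that $E_\tau$ is even. On the other hand,
\[
  -(\ID-E_\tau)\circ g = -g + E_\tau(g),
\]
which matches. This gives the second equality.

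No step looks problematic: the whole argument is a direct assembly of Lemmas~\ref{leminv1} and~\ref{leminv2}, and the only place where something must be checked is the use of the evenness of $E_\tau$ when commuting it past $\sig$, which is immediate from the relation $E_\tau\circ\sig=E_\tau$ established in Lemma~\ref{leminv2}(ii).
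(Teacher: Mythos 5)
Your proof is correct and is exactly the argument the paper intends (the corollary is stated without proof precisely because it follows by substituting Lemma~\ref{leminv2}(iii) into Lemma~\ref{leminv1} and then commuting $\sig$ past the even series $E_\tau$, as you do). The only check that matters — $E_\tau\circ\sig=E_\tau$ when passing from the first to the second expression — is handled correctly.
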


  \begin{lemma}   \label{leminv3}
  Suppose that~$\tau$ is a formal involution. Then, for any complex
  formal diffeomorphism~$\psi(x)$,
  \begin{equation}
    \tau = \psi \circ \sig \circ \psi\ii
    \; \Longleftrightarrow\;
    \psi = U_\tau\ii\circ g\enspace \text{with $g(x)$ odd.}
  \end{equation}
  Moreover, $\psi=U_\tau\ii$ is the only conjugacy such that
  $\psi-\ID$ is even.
\end{lemma}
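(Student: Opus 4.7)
My plan is to reduce the equation $\tau = \psi \circ \sigma \circ \psi^{-1}$ to a statement about commutation with $\sigma$, using Lemma~\ref{leminv1} as the starting point.

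First, since $\tau$ is a formal involution, Lemma~\ref{leminv1} gives the distinguished conjugacy $\tau = U_\tau^{-1} \circ \sigma \circ U_\tau$. Any formal diffeomorphism $\psi$ can be written uniquely as $\psi = U_\tau^{-1} \circ g$ with $g \defeq U_\tau \circ \psi$ a formal tangent-to-identity diffeomorphism. Substituting, I compute
\[
\psi \circ \sigma \circ \psi^{-1} = U_\tau^{-1} \circ (g \circ \sigma \circ g^{-1}) \circ U_\tau,
\]
so the equation $\psi \circ \sigma \circ \psi^{-1} = \tau$ is equivalent to $g \circ \sigma \circ g^{-1} = \sigma$, i.e.\ $g \circ \sigma = \sigma \circ g$. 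Evaluating at $x$, the latter reads $g(-x) = -g(x)$, which is exactly the condition that $g$ be odd. This establishes the equivalence.

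For the moreover statement, I would use the decomposition from Lemma~\ref{leminv2}(ii)--(iii): $U_\tau^{-1} = \ID + E_\tau$ with $E_\tau(x) = O(x^2)$ even. Writing $\psi = U_\tau^{-1} \circ g = g + E_\tau \circ g$ with $g$ odd, I get
\[
\psi - \ID = (g - \ID) + E_\tau \circ g.
\]
The second summand is even, because $E_\tau$ is even and $g$ is odd implies $E_\tau \circ g(-x) = E_\tau(-g(x)) = E_\tau(g(x))$. The first summand $g - \ID$ is odd, because $g$ is odd and $\ID$ is odd. Hence the hypothesis that $\psi - \ID$ is even forces the odd part $g - \ID$ to vanish, i.e.\ $g = \ID$, whence $\psi = U_\tau^{-1}$.

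I do not anticipate a serious obstacle here: the argument is a straightforward composition-by-composition verification once Lemma~\ref{leminv1} is in hand, and the uniqueness statement is a short parity bookkeeping using Lemma~\ref{leminv2}.
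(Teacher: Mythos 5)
Your argument is correct and is essentially the paper's own proof: both set $g=U_\tau\circ\psi$, reduce the conjugacy equation to $g\circ\sigma=\sigma\circ g$ via Lemma~\ref{leminv1}, and then use $U_\tau^{-1}=\ID+E_\tau$ with $E_\tau$ even (Lemma~\ref{leminv2}) to conclude that the odd part of $\psi-\ID$ is exactly $g-\ID$, so evenness forces $g=\ID$. The only (harmless) slip is your parenthetical claim that $g$ is tangent to identity---$\psi$ need not be, hence neither need $g$---but nothing in the argument uses this.
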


\begin{proof}
  Given a complex
  formal diffeomorphism~$\psi$, let $g\defeq
  U_\tau\circ\psi$.
  We have $\psi \circ \sig \circ \psi\ii=U_\tau\ii\circ g\circ\sig
  \circ g\ii \circ U_\tau$, hence, in view of Lemma~\ref{leminv1},
  $\psi \circ \sig \circ \psi\ii = \tau
    \,\Longleftrightarrow\,
  g\circ\sig\circ g\ii= \sig     \,\Longleftrightarrow\,$  $g$ is odd.

\medskip

Now, $\psi = U_\tau\ii\circ g = g + E_\tau\circ g$ by
Lemma~\ref{leminv2}(iii). Thus, under the assumption that~$g$ is odd, 
$(\psi-\ID)\circ\sig = -g +\ID +E_\tau\circ g\circ \sig = -g +\ID
+E_\tau\circ \sig\circ g = -g +\ID +E_\tau\circ g$ by Lemma~\ref{leminv2}(ii),
and 
\begin{multline*}
\text{$\psi-\ID$ even} \;\Longleftrightarrow\;
  \psi-\ID = (\psi-\ID)\circ\sig \\[1ex] \;\Longleftrightarrow\;
g -\ID+ E_\tau\circ g = -g +\ID +E_\tau\circ g \;\Longleftrightarrow\;
g=\ID.
\end{multline*}
\end{proof}
 

\begin{corollary}
    Given two formal involutions~$\tau$ and~$\tau'$ of the
    form~\eqref{eqtauini}, they are formally conjugate.
    More precisely, given a complex formal diffeomorphism~$\psi$,
    \begin{align*}
      \tau' = \psi \circ \tau\circ\psi\ii
      %
      %
      & \;\Longleftrightarrow\;
 \psi = (\ID+E_{\tau'})\circ \gamma\circ (\ID+E_\tau)\ii
        \enspace \text{with $\gamma$ odd.} 
    \end{align*}
  \end{corollary}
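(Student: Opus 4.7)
The plan is to bootstrap directly from Lemma~\ref{leminv3}, using the canonical conjugator $\phi_\tau \defeq \ID + E_\tau = U_\tau\ii$ that the previous two lemmas have already singled out. Since the latter is a formal tangent-to-identity diffeomorphism and, by Lemma~\ref{leminv3} applied with $g=\ID$, satisfies $\tau = \phi_\tau\circ\sig\circ\phi_\tau\ii$, and similarly $\tau' = \phi_{\tau'}\circ\sig\circ\phi_{\tau'}\ii$, both involutions are formally conjugate to~$\sig$, hence to each other. This already settles the first assertion.

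For the ``more precisely'' part, I would substitute these two presentations into the equation $\tau' = \psi\circ\tau\circ\psi\ii$ to obtain
\[
\phi_{\tau'}\circ\sig\circ\phi_{\tau'}\ii
= \psi\circ\phi_\tau\circ\sig\circ\phi_\tau\ii\circ\psi\ii,
\]
i.e.\ $\sig = \eta\ii\circ\sig\circ\eta$ with $\eta \defeq \phi_{\tau'}\ii\circ\psi\circ\phi_\tau$. Equivalently $\eta\circ\sig = \sig\circ\eta$, which just means \emph{$\eta$ is odd}. Conversely any odd $\eta$ will do. Setting $\gamma \defeq \eta$ and solving for~$\psi$ then gives
\[
\psi = \phi_{\tau'}\circ\gamma\circ\phi_\tau\ii
= (\ID+E_{\tau'})\circ\gamma\circ(\ID+E_\tau)\ii,
\]
which is exactly the claimed form. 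The reverse implication (that any such $\psi$ does conjugate $\tau$ to~$\tau'$) is obtained by reading the same chain of equivalences backwards.

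The main ``obstacle''—really only a bookkeeping nuisance—is getting the direction of composition right, since Lemma~\ref{leminv3} is stated as a characterization of conjugacies $\psi$ with $\tau = \psi\circ\sig\circ\psi\ii$, whereas here one needs to conjugate $\sig$ to itself and one must verify that $\phi_\tau$ and $\phi_{\tau'}$ are indeed invertible formal diffeomorphisms (this is immediate from $E_\tau = O(x^2)$, given by Lemma~\ref{leminv2}(ii)). One minor thing to notice is that commuting with $\sig$ is visibly equivalent to being odd, so no further invocation of Lemma~\ref{leminv3} is actually required at the final step; the characterization of $\gamma$ drops out of the commutation relation $\gamma\circ\sig = \sig\circ\gamma$ alone.
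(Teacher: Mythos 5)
Your proposal is correct and matches the route the paper intends: the corollary is stated without proof precisely because, as you show, it follows at once from Lemma~\ref{leminv1} (writing $\tau=(\ID+E_\tau)\circ\sig\circ(\ID+E_\tau)\ii$ and likewise for~$\tau'$) together with the observation that commuting with~$\sig$ is the same as being odd. Your reduction of $\tau'=\psi\circ\tau\circ\psi\ii$ to the commutation relation for $\eta=(\ID+E_{\tau'})\ii\circ\psi\circ(\ID+E_\tau)$ is exactly the intended argument.
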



  \section{On the jets of area-preserving maps}    \label{appAreaP}


In the course of the proof of Theorem~\ref{main}, we make use of

  \begin{proposition}  \label{propAreaP}
  Let $N\ge1$. Given a polynomial $J(z)=\la z+\sum_{2\le j+k\le N}J_{jk}z^j\bar z^k$ that is the $N$-jet of an
  area-preserving formal map $F\in\Fom$,
  one can find an area-preserving polynomial map $F_0 \in \Fom$ with $N$-jet
  equal to~$J$. 
\end{proposition}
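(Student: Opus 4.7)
The plan is to proceed by induction on $N$, after reducing to maps tangent to the identity. Writing $F = R_\la \circ G$ with $R_\la(z)\defeq\la z$ (a linear, hence polynomial, area-preserving rotation) and $G\in\Fom$ tangent to identity and area-preserving, it suffices to construct a polynomial area-preserving $G_0$ tangent to identity whose $N$-jet matches that of $G$, and then set $F_0\defeq R_\la\circ G_0$. The base case $N=1$ is trivial ($G_0\defeq\ID$).

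For the inductive step, assume a polynomial area-preserving $G_1$ matching $G$ up to order $N-1$ has already been produced. The composition $\Phi\defeq G_1\ii\circ G$ is then formally area-preserving and of the form $\Phi=\ID+h_N+O(|z|^{N+1})$, where $h_N$ is a vector-valued homogeneous polynomial of degree $N$ in $(x,y)\defeq(\Re z,\Im z)$. Expanding $\det D\Phi=1$ at order $N-1$ yields $\operatorname{tr}(Dh_N)=0$, so by the Poincar\'e lemma on the plane $h_N=X_K\defeq(\pa_y K,-\pa_x K)$ for some real homogeneous polynomial $K(x,y)$ of degree $N+1$. The problem thus reduces to constructing a polynomial area-preserving $\widetilde\Phi$ with $\widetilde\Phi=\ID+X_K+O(|z|^{N+1})$; then $G_0\defeq G_1\circ\widetilde\Phi$ completes the induction.

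The heart of the proof is an explicit polynomial construction. First I would decompose
\[
K(x,y)=\sum_{i=1}^{N+2}c_i(x+t_iy)^{N+1}
\]
for distinct real $t_1,\ldots,t_{N+2}$, which is possible because the change-of-basis matrix from $\{(x+t_iy)^{N+1}\}_i$ to the monomial basis is, up to scaling by binomial coefficients, a Vandermonde matrix, hence invertible. For each summand $K_i\defeq c_i(x+t_iy)^{N+1}$, the linear form $x+t_iy$ is a first integral of $X_{K_i}$ (the vector field is everywhere parallel to the kernel of $d(x+t_iy)$), so $X_{K_i}$ is constant along its own trajectories, and the time-$1$ map $\Phi_i$ equals exactly $\ID+X_{K_i}$, which is polynomial and area-preserving. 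Composing, $\widetilde\Phi\defeq\Phi_1\circ\cdots\circ\Phi_{N+2}$ satisfies $\widetilde\Phi=\ID+X_K+O(|z|^{2N-1})$ by direct expansion, and $2N-1\ge N+1$ for $N\ge2$ delivers the required $N$-jet.

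The main obstacle is really the identification $h_N=X_K$: one must verify that the area-preservation of a tangent-to-identity formal map forces each of its vectorial homogeneous pieces to be divergence-free, after which the Waring-type decomposition of $K$ and the closed form of the time-$1$ flow of each $X_{K_i}$ are elementary. For the addendum concerning odd area-preserving jets, the same scheme applies: oddness of $J$ forces $G$ and hence $h_N$ to be odd (so $N$ is odd and $K$ is even), and the $t_i$'s can then be chosen symmetrically (for instance in pairs $\pm t$), making each $\Phi_i$ odd and therefore $F_0$ odd.
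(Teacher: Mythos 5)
Your proof is correct and follows essentially the same route as the paper's: induction on the jet order, identification of the divergence-free homogeneous correction as a Hamiltonian vector field $X_K$, decomposition of $K$ into powers of linear forms (your Vandermonde argument in place of the paper's duality argument for Lemma~\ref{lemSpan}), and composition of the explicit polynomial shears $\ID+X_{K_i}$ as in Lemma~\ref{lemHamilt}. The only cosmetic differences are that you factor out the rotation at the outset and that, in the odd case, each $\Phi_i$ is automatically odd (since $X_{K_i}$ is homogeneous of odd degree) without any symmetric choice of the $t_i$'s.
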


This is the real two-dimensional version of \cite[Theorem~1]{LPPW}.
We reproduce their elegant proof
adapting it to our setting, for the sake of completeness, and also
because the proof of our Theorem~\ref{thmB} requires the following ``odd'' variant:

\begin{addC}
 If, moreover, the polynomial~$J$ is odd, then one can take~$F_0$ odd.
\end{addC}

Recall that from the start we have identified~$\R^2$ and~$\C$ via
$z=x+i y$.
Here we will work in coordinates $(x,y)$, with respect to which the
area-preserving condition is expressed by saying that the Jacobian
determinant is the constant~$1$.
We find it convenient to write the generic point of~$\R^2$ as a column
vector $\binom x y$.
The proof of Proposition~\ref{propAreaP} and its addendum requires two lemmas.

  \begin{lemma}  \label{lemSpan}
    For any integer $d\ge0$, the $\R$-vector space of $d$-homogeneous
    polynomials in $(x,y)$ is spanned by the $d^{\text{th}}$ powers of linear
    forms of~$\R^2$.
  \end{lemma}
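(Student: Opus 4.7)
The plan is to exhibit an explicit basis of the $(d+1)$-dimensional $\R$-vector space $V_d$ of $d$-homogeneous polynomials in $(x,y)$ consisting of $d$-th powers of linear forms, from which the spanning statement follows immediately. A natural family to try is
\[
  p_i(x,y) \defeq (x + t_i y)^d, \qquad i=0,1,\ldots,d,
\]
where $t_0, t_1, \ldots, t_d$ are pairwise distinct real numbers. Since $\dim V_d = d+1$, showing linear independence of $p_0,\ldots,p_d$ suffices.

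To check this, I would expand by the binomial formula in the monomial basis $e_k \defeq x^{d-k}y^k$ ($0\le k\le d$):
\[
  p_i = \sum_{k=0}^{d} \binom{d}{k}\, t_i^{\,k}\, e_k.
\]
The matrix of coordinates $M = \bigl(\binom{d}{k}\, t_i^{\,k}\bigr)_{0\le i,k\le d}$ factors as $M = V \cdot D$, where $V = (t_i^{\,k})$ is the Vandermonde matrix associated to $t_0,\ldots,t_d$ and $D = \operatorname{diag}\bigl(\binom{d}{0},\ldots,\binom{d}{d}\bigr)$. The binomial coefficients being nonzero, $D$ is invertible, and $V$ is invertible precisely because the $t_i$ are pairwise distinct. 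Hence $M$ is invertible, so $(p_0,\ldots,p_d)$ is a basis of $V_d$, and in particular every $d$-homogeneous polynomial is a real linear combination of $d$-th powers of linear forms.

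There is essentially no obstacle here: the only thing one has to notice is that the coordinate vectors of the $p_i$'s form a Vandermonde-type matrix, whose nondegeneracy is classical. The case $d=0$ is the trivial statement that constants are spanned by the $0$-th power of any linear form, so it fits the same scheme.
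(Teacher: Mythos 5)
Your proof is correct, but it takes a genuinely different route from the paper's. The paper argues by duality: it lets $W_d$ be the span of the $d$-th powers and shows that any linear functional $L\in (V_d)^*$ vanishing on $W_d$ must be zero, because $\langle L,(ax+by)^d\rangle=\sum_{p+q=d}\binom{d}{p}a^pb^q\langle L,x^py^q\rangle$ is a polynomial in $(a,b)$ that vanishes identically, forcing all coefficients $\langle L,x^py^q\rangle$ to vanish. You instead exhibit an explicit basis $\bigl((x+t_iy)^d\bigr)_{0\le i\le d}$ with pairwise distinct $t_i$, reducing linear independence to the invertibility of a Vandermonde matrix times an invertible diagonal matrix; the factorization $M=VD$ and the check of the binomial expansion are both correct, and your remark disposing of $d=0$ is fine. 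What each approach buys: yours is constructive and quantitative --- it shows that $d+1$ powers suffice (which is optimal, since $\dim V_d=d+1$) and names them explicitly --- whereas the paper's annihilator argument is shorter and extends verbatim to homogeneous polynomials in any number of variables, where an explicit Vandermonde-type basis would be more delicate to produce. For the use made of the lemma in Appendix~C (where one only needs \emph{some} finite decomposition $H=\sum_j c_j(a_jx+b_jy)^{N+2}$, with the paper noting $M\le N+3$), either argument is entirely adequate, and yours in fact justifies the bound $M\le N+3$ directly.
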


  \begin{lemma}  \label{lemHamilt}
    Given $d\ge0$ integer and $a,b,c\in\R$, the polynomial map
    \begla
\Phi_{a,b}^{c,d} \col
    \begin{pmatrix} x \\[1.5ex] y \end{pmatrix}
    \mapsto
    \begin{pmatrix} x + (d+1) c b (ax+by)^d \\[1.5ex]
      y -  (d+1) c a (ax+by)^d \end{pmatrix}
    \edla
    is area-preserving.
  \end{lemma}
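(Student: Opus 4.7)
The plan is to recognize $\Phi_{a,b}^{c,d}$ as the time-$c$ map of a Hamiltonian flow, from which area-preservation follows automatically. Consider the Hamiltonian $H(x,y) \defeq (ax+by)^{d+1}$; its Hamiltonian vector field is
\[
X_H = \begin{pmatrix} \pa_y H \\[.5ex] -\pa_x H \end{pmatrix} = \begin{pmatrix} (d+1)b(ax+by)^d \\[.5ex] -(d+1)a(ax+by)^d \end{pmatrix}.
\]
The key observation is that the linear form $ax+by$ is itself a first integral of $X_H$, since $a\cdot(d+1)b(ax+by)^d + b\cdot\bigl(-(d+1)a(ax+by)^d\bigr) = 0$. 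Consequently $X_H$ is constant along each of its own trajectories, so the trajectories are straight lines traversed at constant speed, and the flow is globally given by
\[
\phi_t(x,y) = \begin{pmatrix} x + t(d+1)b(ax+by)^d \\[.5ex] y - t(d+1)a(ax+by)^d \end{pmatrix}.
\]
Setting $t=c$ reproduces $\Phi_{a,b}^{c,d}$. Since Hamiltonian flows preserve the standard symplectic form $dx\wedge dy$, the lemma follows.

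As a sanity check (and an alternative self-contained proof requiring no conceptual input), I would verify area-preservation by computing the Jacobian determinant directly. Writing $u \defeq d(d+1)c(ax+by)^{d-1}$, the Jacobian matrix of $\Phi_{a,b}^{c,d}$ reads
\[
\begin{pmatrix} 1 + abu & b^2 u \\[.5ex] -a^2 u & 1 - abu \end{pmatrix},
\]
whose determinant equals $(1+abu)(1-abu) + a^2 b^2 u^2 = 1 - a^2b^2u^2 + a^2b^2u^2 = 1$.

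There is no real obstacle in this proof: the only ``trick'' is to identify the right Hamiltonian, after which the straight-line structure of the trajectories makes the time-$c$ flow explicit. The direct Jacobian computation requires no insight at all and could be used instead if one prefers to avoid invoking any Hamiltonian machinery. This lemma will then be combined with Lemma~\ref{lemSpan} in the proof of Proposition~\ref{propAreaP}, where linear combinations of maps of the form $\Phi_{a,b}^{c,d}$ (composed appropriately, since the space of area-preserving maps is not a linear space) will be used to realize arbitrary polynomial jets.
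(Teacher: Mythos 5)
Your proof is correct and matches the paper's own argument, which likewise notes that a direct Jacobian check is immediate and that $\Phi_{a,b}^{c,d}$ is the time-$c$ map of the Hamiltonian vector field generated by $H(x,y)=(ax+by)^{d+1}$, of which $ax+by$ is a first integral. Your version merely spells out both routes in more detail.
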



  \begin{proof}[Proof of Lemma~\ref{lemSpan}]
    Let~$V_d$ denote the $\R$-vector space of $d$-homogeneous
    polynomials in $(x,y)$, a basis of which is $(x^p y^q)_{p,q\ge0;\,p+q=d}$,
    and let~$W_d$ denote the subspace spanned by the $d^{\text{th}}$ powers of linear
    forms of~$\R^2$.
    Suppose that a linear form $L\in (V_d)^*$ vanishes on~$W_d$. We
    thus have, for any $a,b\in \R$,
    \begla
    0 = \langle L, (ax+by)^d\rangle = \sum_{p+q=d} \binom d p a^p b^q
    \langle L, x^p y^q \rangle,
    \edla
    which implies that~$L$ annihilates all the basis elements $x^p
    y^q$, whence $L=0$. It follows that $W_d = V_d$.
  \end{proof}

  \begin{proof}[Proof of Lemma~\ref{lemHamilt}]
    A direct check is obvious, but it is worth noting
    that~$\Phi^{c,d}_{a,b}$ is nothing but the time-$c$ map of the
    Hamiltonian vector field generated by $H(x,y) = (ax+by)^{d+1}$ (of
    which $ax+by$ is a first integral).
%
    %
  \end{proof}
  
\begin{proof}[Proof of Proposition~\ref{propAreaP} and its addendum]
  We argue by induction on~$N$.
  \smallskip
  
\noindent -- For $N=1$, we have $J(z)=\la z$ and we can take for~$F_0$
the linear map
\begla
    R_\om \col
    \begin{pmatrix} x \\[1.5ex] y \end{pmatrix}
    \mapsto
    \begin{pmatrix}
      x \cos(2\pi\om) - y \sin(2\pi\om) \\[1ex]
      x \sin(2\pi\om) + y \cos(2\pi\om)
    \end{pmatrix}.
    \edla
  
\noindent -- Suppose that the property has been proved for a certain $N\ge1$ and
  let~$J$ be the $(N+1)$-jet of a formal area-preserving $F\in\Fom$.
  The property at rank~$N$ gives us a polynomial area-preserving
$F_N\in\Fom$, odd if~$J$ is odd, such that
$F_N = J + \cO_{N+1}$.
%
Comparing with $F=J+\cO_{N+2}$, we can write
$F=F_N+\cO_{N+1} = (\ID+\cO_{N+1})\circ F_N$, since
after factoring out~$R_\om$ we are dealing with tangent-to-identity
formal diffeomorphisms and can use the property
\beglab{eqkhomog}
\text{$G$ of order $\ge2$} \imp
(\ID+ \cO_{N+1})\circ(\ID+G) =
\ID+G + \cO_{N+1}.
\edla
Thus
\begla
F = F_* \circ F_N
\quad \text{with area-preserving}\ens
F_* = \ID + \cO_{N+1}.
\edla
%
%
%
%
We just need to find a polynomial area-preserving map~$\Phi$
such that
$F_* = (\ID + \cO_{N+2}) \circ \Phi$ or, equivalently (still
by~\eqref{eqkhomog} but at order $N+1$),
\begla
F_* = \Phi + \cO_{N+2}
\edla
and the desired result will be achieved with $F_0 := \Phi \circ F_N$.

To this end, we write 
\beglab{eqdefNphomog}
F_*\begin{pmatrix} x \\[2ex] y \end{pmatrix}
    =
    \begin{pmatrix}
      x + g(x,y) + \cO_{N+2}
      \\[1.5ex]
      y + h(x,y) + \cO_{N+2}
      %
    \end{pmatrix}
    \edla
    with $(N+1)$-homogeneous polynomials~$g$ and~$h$. 
    Since~$F_*$ preserves area, we have
    \begla
    \begin{vmatrix}
    1 + \dfrac{\pa g}{\pa x} + \cO_{N+1} &
    \dfrac{\pa g}{\pa y} + \cO_{N+1}
    \\[2.5ex]
    \dfrac{\pa h}{\pa x} + \cO_{N+1} &
    1 + \dfrac{\pa h}{\pa y} + \cO_{N+1}
    %
    \end{vmatrix} = 1,
  \edla
  and the $N$-homonegeous part of the determinant gives
$\frac{\pa g}{\pa x} = - \frac{\pa h}{\pa y}$
  %
  %
(since $0<N<2N$),
whence
\begla
g = \frac{\pa H}{\pa y},
\quad
h = -\frac{\pa H}{\pa x}
\edla
for some $(N+2)$-homogeneous polynomial $H(x,y)$.

Note that $F_*=J\circ F_N\ii + \cO_{N+2}$, thus~$\binom g h$ is
  the $(N+1)$-homogeneous part of $J\circ F_N\ii$.
  In the case where~$J$ is supposed to be odd, $g$ and~$h$ are thus
  odd, which means
  \begla
  \text{$J$ odd and $N$ odd} \imp g=h=0, \ens\text{whence}\ens H=0.
  \edla

Lemma~\ref{lemSpan} gives us $1\le M\le N+3$ and real numbers $c_j,a_j,b_j$
such that
\begla
H = H_1+\cdots +H_M,
\qquad
H_j \defeq c_j (a_j x + b_j y)^{N+2}
\quad \text{for $1\le j \le M$}
\edla
(where of course we take all the $c_j$'s equal to~$0$ when~$J$ is odd
  and~$N$ is odd).
We now use the polynomial area-preserving maps $\Phi_{a_j,b_j}^{c_j,N+1}$ of
Lemma~\ref{lemHamilt} 
%
%
and consider
\begla
\Phi \defeq \Phi_{a_1,b_1}^{c_1,N+1} \circ \cdots\circ \Phi_{a_M,b_M}^{c_M,N+1}.
\edla
By virtue of the property
\begin{multline}   
\text{$G$ of order $\ge2$ and $\gP$ of order $\ge N+1$} \imp \\[1ex]
(\ID+\gP+ \cO_{N+2})\circ(\ID+G) =
\ID+\gP +G + \cO_{N+2},
\end{multline}
we get $\Phi = F_* + \cO_{N+2}$.
%
%
Note that~$\Phi$ is odd if~$N$ is even;
  in the case where~$J$ is supposed to be odd, we thus have~$\Phi$ odd
  whatever the parity of~$N$.
Since~$\Phi$ is area-preserving and polynomial,
our goal is achieved with $F_0 := \Phi \circ F_N$.
%
%
\end{proof}

  \section{Alternative proof for the holomorphic case}    \label{appaltpf}


  In this appendix, we focus on the case of a germ of complex holomorphic map
\beglab{eqFholom}
  F(z) = \la z + \sum_{k\ge2} f_{k} z^k \in \C\{z\}.
\edla
Notice that its complex extension~$\hat F$ is decoupled:
%
\begla
  \hat F(z,w) = \big( F(z), F^*(w) \big),
  \quad \text{where}\ens
F^*(w) \defeq \ov\la w + \sum_{k\ge2} \ov{f_{k}} w^k.
\edla

The germ~$F$ is an element of the group~$\cG$ of local
biholomorphisms of $(\C,0)$. 
Let us use the notations~$\ti\cG$ for the group of formal
biholomorphisms and~$\ti\cG_1$ for the subgroup of formal
tangent-to-identity biholomorphisms,
\begla
\ti\cG_1 \defeq \big\{\, h(z) = z + \sum_{n\ge2} h_n z^n \in \C[[z]] \,\big\}.
\edla
It is well-known (and easy to check) that any $F$ in~$\cG$ (or in~$\ti\cG$) admits a unique
\emph{linearization} in~$\ti\cG_1$, \ie there is a unique $h=h_F\in\ti\cG_1$
such that
\beglab{eqlinerariz}
h\circ F(z) = \la h(z),
\edla
because $\la = e^{2\pi i\om}$ with $\om$ irrational.
It is also known (but much subtler) that
\begin{enumerate}
  \item if $\om$ is a Bruno number, then~$h_F$ is convergent for any~$F$ of the form~\eqref{eqFholom};\\[-1.5ex]
  \item if $\om$ is a non-Bruno number, then there are holomorphic germs~$F$ for which~$h_F$ is
    divergent, for instance $F(z) = \la z(1+z/d)^d$, $d\geq1$
    (Theorems~\ref{holo}--\ref{Ge}) or $F(z) = \la z e^z$.
  \end{enumerate}

  Now, the solution~$h_F$ of~\eqref{eqlinerariz} can be interpreted as
  a normalization, and \emph{a fortiori} a geometric normalization
  of~$F$. Indeed,
  \beglab{eqparticularGN}
\{ \, G(z)=\la z \;\text{and}\; \Phi(z)=h_F(z) \,\} \Imp \Phi\circ F = G\circ \Phi
\edla
with $\abs{G(z)}^2=\abs{z}^2$, hence $\Ga(R)=R$ in this case, while
\begla
\La_F \defeq \abs{\Phi(z)}^2 = h_F(z) \ov{h_F(z)} = h_F(z) h^*_F(\bz).
\edla
Here we use the notation
\beglab{eqnotahF}
h_F(z) = \sum_{n\ge1} h_n z^n, \quad
h^*_F(w) = \sum_{n\ge1} \ov{h_n} w^n, \quad
h_1=1.
\edla
Thus, \eqref{eqparticularGN} gives
a particular geometric normalization. Notice that the resonant part of
$\La_F$ is $\rho(R) = \sum_{n\ge2} \abs{h_n}^2 R^n$.

Suppose that $h_F(z)$ is divergent. Then~\eqref{eqparticularGN} gives
a particular divergent geometric normalization, we even have $\La_F$ divergent—what about the other
geometric normalizations?
All of them are divergent because of the equivalence
between Lyapunov stability and analytic linearizability
(see \cite[Chap.~III \S~25]{SM} or \cite{PM3}).
We will now give a direct proof of this fact.

\begin{proposition}   \label{propallGNdiv}
  If $h_F$ is divergent, then all the geometric normalizations~$\Phi$ of~$F$
  are divergent. In fact, all of them have $L=\abs{\Phi}^2$ divergent.
\end{proposition}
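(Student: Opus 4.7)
The plan is to show the contrapositive: if some $F$-admissible formal series $L$ is convergent, then the linearization $h_F$ is convergent, which gives a direct proof of the proposition. The starting observation is that, by Proposition~\ref{integ}(i), $F$ is formally conservative, so $\Ga=\ID$ and every $F$-admissible $L$ satisfies simply $L\circ F = L$. Passing to complex extensions and using that for holomorphic $F$ the extension decouples as $\hat F(z,w) = (F(z), F^*(w))$ with $F^*(w) = \ov\la w + O(w^2)$, I would rewrite the invariance as the two-variable identity
\[ L(F(z), F^*(w)) = L(z, w) \quad \text{in } \C[[z,w]]. \]

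My first real step would be to eliminate the ``pure'' terms of $L(z,w)$. Setting $w=0$ (using $F^*(0)=0$) yields $L(F(z),0) = L(z,0)$. Since $L\in\gL$ has quadratic part $zw$, we have $L(z,0) = \sum_{r\ge3} L_{r,0} z^r$; if this series were nonzero, comparing lowest-order terms would force $\la^{r_0} = 1$ for some $r_0\ge3$, contradicting the irrationality of $\om$. Hence $L(z,0) \equiv 0$ and symmetrically $L(0,w)\equiv 0$, so I may factor
\[ L(z,w) = zw\, M(z,w), \qquad M\in\C[[z,w]], \enspace M(0,0)=1. \]

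Next I would differentiate the invariance identity in $w$ at $w=0$. The left-hand side yields $\ov\la F(z)\, M(F(z),0)$, the right-hand side yields $z\, M(z,0)$, so
\[ F(z)\, M(F(z),0) = \la z\, M(z,0). \]
Writing $\phi(z) \defeq z M(z,0) = z + O(z^2)$, this reads $\phi\circ F = \la \phi$, and by uniqueness of the formal linearization I obtain $\phi = h_F$, i.e., $h_F(z) = z\, M(z,0)$. Now, if $L$ were convergent as a real-analytic function, its complex extension would converge on a polydisc, hence so would $M(z,w)$, and in particular $M(z,0)$, and therefore $h_F$ itself---contradicting the hypothesis.

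To conclude, every $F$-admissible $L$ must be divergent. Finally, for any geometric normalization $\Phi$ we have $L = |\Phi|^2 = \Phi\cdot\ti\Phi$, and this product is convergent whenever $\Phi$ is; hence the divergence of every admissible $L$ forces the divergence of every geometric normalization $\Phi$ as well. I do not foresee any serious obstacle here: the only delicate point is checking that $L(z,0)$ must vanish identically, and that follows routinely from the non-resonance hypothesis via the $F$-invariance.
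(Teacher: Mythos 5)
Your proof is correct, and it takes a genuinely different route from the paper's. The paper first invokes Corollary~\ref{setF} to write any admissible series as $L(z,w)=M\bigl(h_F(z)h_F^*(w)\bigr)$ with $M(R)=R+O(R^2)$, and then proves a standalone factorization lemma (Lemma~\ref{lemthreetgtid}): if a series of the form $M\bigl(f(z)g(w)\bigr)$ with $f,g,M$ tangent to identity is convergent, then $f$, $g$ and~$M$ are each convergent; that lemma carries the real cost of the argument, its proof passing through logarithmic derivatives, restriction to the diagonal $\{w=z\}$, and a linear ODE with a regular singular point. You instead exploit the dynamics directly: since $\Ga=\ID$ for holomorphic~$F$ (Proposition~\ref{integ}(i) combined with the fact that the conjugacy class of~$\Ga$ is the same for all admissible pairs), every admissible~$L$ satisfies $L\circ\hat F=L$ with~$\hat F$ decoupled; non-resonance forces $L(z,0)\equiv L(0,w)\equiv 0$, so $L=zw\,M$, and the $w$-derivative of the invariance relation at $w=0$ identifies $z\,M(z,0)=\partial_w L(z,w)\big|_{w=0}$ with the linearization~$h_F$ by uniqueness of the tangent-to-identity linearizing series. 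This is shorter, entirely formal-algebraic, and even yields the explicit formula $h_F(z)=\partial_w L(z,w)\big|_{w=0}$; what it gives up is the extra generality of the paper's lemma, which also delivers the convergence of the reparametrization~$M$ and is not tied to any dynamics. Both arguments conclude identically: a convergent geometric normalization~$\Phi$ would produce a convergent admissible $L=\Phi\ti\Phi$, so the divergence of all admissible~$L$ forces the divergence of all~$\Phi$.
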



The proof of Proposition~\ref{propallGNdiv} will be derived from

\begin{lemma}   \label{lemthreetgtid}
  Suppose we have three tangent-to-identity formal series
  \begla
f(z) = z + O(z^2) \in \C[[z]], \ens
g(w) = w + O(w^2) \in \C[[w]], \ens
M(R) = R + O(R^2) \in \C[[R]].
\edla
Let
  \beglab{eqdefCzw}
  C(z,w) \defeq M\big( f(z) g(w) \big) = zw + O_3(z,w) \in \C[[z,w]].
  \edla
  Then
  \begla
  \text{$C(z,w)$ convergent} \Imp \text{$f$, $g$ and~$M$ are
    convergent.}
  \edla
  \end{lemma}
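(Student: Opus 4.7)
My approach would be to exploit the product structure $f(z)g(w)$ in the composition: restricting $C$ to the coordinate axes will recover $f$ and $g$ up to an elementary factor, and restricting to a generic vertical slice will recover $M$.

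First, write $M(R) = R\cdot U(R)$ with $U\in\C[[R]]$ a unit ($U(0)=1$), so that
\[
C(z,w) = f(z)\,g(w)\,U\bigl(f(z)g(w)\bigr).
\]
Since $f(0)=g(0)=0$, one has $C(z,0)=0$ and $C(0,w)=0$ identically. Hence $C(z,w)=zw\,D(z,w)$ for a unique formal series $D(z,w)=1+O(z,w)$. If $C(z,w)$ is convergent in a polydisc $\{|z|,|w|<\rho\}$, the elementary Hartogs-type division (dividing the Taylor series by~$z$, then by~$w$) shows that $D$ is analytic in the same polydisc.

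Second, I would read off $f$ and $g$ from $D$. The factorization above gives
\[
D(z,w) = \frac{f(z)}{z}\cdot\frac{g(w)}{w}\cdot U\bigl(f(z)g(w)\bigr).
\]
Setting $w=0$ and using $g(w)/w\big|_{w=0}=g'(0)=1$ together with $U(0)=1$, one obtains $D(z,0)=f(z)/z$, whence $f(z)=z\,D(z,0)$ is analytic for $|z|<\rho$. By symmetry, $g(w)=w\,D(0,w)$ is analytic for $|w|<\rho$.

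Third, to recover~$M$, I would pick any $w_0$ with $0<|w_0|<\rho$ small enough that $g(w_0)\neq 0$ (possible since $g(w)=w+O(w^2)$). The map $\psi(z)\defeq f(z)\,g(w_0)$ is then analytic near $0$ with $\psi(0)=0$ and $\psi'(0)=g(w_0)\neq 0$, hence a local biholomorphism. Its analytic inverse $\psi^{-1}$ yields
\[
M(R) = C\bigl(\psi^{-1}(R),w_0\bigr)
\]
near $R=0$, so $M$ is analytic. Matching Taylor expansions at the origin identifies this analytic germ with the given formal series $M$, proving convergence.

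There is no real obstacle in this plan: the lemma is essentially a ``separation of variables'' statement, and the only subtlety is ensuring that the elementary divisions ($C/zw$ analytic, and inversion of $\psi$) are legitimate, both of which reduce to standard facts about analytic functions in one or two complex variables.
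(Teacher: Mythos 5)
Your argument is correct, and it takes a genuinely different --- and more elementary --- route than the paper's. You exploit the product structure directly: writing $M(R)=R\,U(R)$ and $C=zw\,D$, the restrictions of the convergent series $D$ to the two coordinate axes give $f(z)/z$ and $g(w)/w$ at once, because on $\{w=0\}$ both $g(w)/w$ and $U\big(f(z)g(w)\big)$ reduce to their constant term~$1$ (and symmetrically on $\{z=0\}$); then $M$ is recovered from the slice $C(\cdot,w_0)=M\circ\psi$ with $\psi(z)=f(z)g(w_0)$ an invertible convergent germ. The paper instead inverts $M$ (setting $N\defeq M\ii$, so that $f(z)g(w)=N\big(C(z,w)\big)$), applies a logarithmic $z$-derivative followed by a $w$-derivative to eliminate $f$ and $g$, and restricts to the complex diagonal $\{w=z\}$; after taking a square root and extracting even parts, this produces a linear ODE $R\,P'(R)+A_0(R)P(R)=B(R)$ for $P=N_0'/N_0$ with a regular singular point at $R=0$ and $A_0(0)=1$, whose unique formal solution must be convergent. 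Your proof avoids both the inversion of~$M$ and the ODE machinery. The only points you leave implicit are routine: first, that substituting the numerical value $w=w_0$ into the formal identity $C(z,w)=M\big(f(z)g(w)\big)$ is legitimate --- for each fixed $j$, the coefficient of $z^j$ on the right-hand side is the \emph{finite} sum $\sum_{n\le j}M_n\,\big([z^j]f^n\big)\,g(w)^n$ of convergent series in~$w$, so it may be evaluated at~$w_0$ and agrees with $\sum_k C_{jk}w_0^k$; second, that $C\big(\psi\ii(R),w_0\big)=M(R)$ then follows from the associativity of formal composition. With these two remarks spelled out, your proof is complete and correct.
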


  \bg
  
  \begin{proof}[Proof that Lemma~\ref{lemthreetgtid} implies Proposition~\ref{propallGNdiv}]
%
    %
    By Corollary~\ref{setF}, the square norm~$L$ of an arbitrary geometric
normalization~$\Phi$ of~$F$ can be written $L = M\circ \La_F$ for a
certain $M(R) = R + O(R^2) \in \R[[R]]$,
\ie $L(z)=M\big( h_F(z) h^*_F(\bz) \big)$.
Suppose that $L(z)$ is convergent. Then its holomorphic extension to~$\C^2$,
\begla
L(z,w)=M\big( h_F(z) h^*_F(w) \big),
\edla
is convergent for~$\abs z$ and~$\abs w$ small
enough and Lemma~\ref{lemthreetgtid} shows that~$h_F$ is convergent.
    \end{proof}

    \smallskip
    
    \begin{proof}[Proof of Lemma~\ref{lemthreetgtid}]
      \textbf{(a)\,} Suppose that $C(z,w)$ defined by~\eqref{eqdefCzw} is convergent.
      Let us consider the inverse~$N$ of~$M$ in the group~$\ti\cG_1$,
      so that
      \beglab{eqfgNC}
f(z) g(w) = N\big( C(z,w) \big).
      \edla
      We want to apply the logarithmic derivative \wrt~$z$,
      \begla
      \pa\log\ph \defeq \frac{\pa \ph}{\ph},
      \quad\text{with}\ens \pa \defeq \frac{\pa\,}{\pa z}
      \edla
(after which we'll apply $\frac{\pa\,\,}{\pa w}$), but
$\pa\log\ph$ is defined only if $\ph(z,w)$ has nonzero constant term,
so we must first take care of the leading terms of $f$, $g$, $N$
and~$C$. We thus set
\begla
f(z) = z f_0(z), \quad g(w) = w g_0(w), \quad
M(R) = R M_0(R), \quad N(R) = R N_0(R),
\edla
where $f_0$, $g_0$, $M_0$ and $N_0$ are formal series with a constant
term equal to~$1$.
From~\eqref{eqdefCzw} we get
\begla
C(z,w) = z w C_0(z,w)
\quad\text{with}\ens
C_0(z,w) \defeq f_0(z) g_0(w) M_0\big( f(z) g(w) \big)
\in \C[[z,w]]
\edla
and we can rewrite~\eqref{eqfgNC} as
\beglab{eqfzgzCzNz}
f_0(z) g_0(w) = C_0(z,w) N_0\big( C(z,w) \big).
\edla

\bg

\noindent \textbf{(b)\,}
Now we can apply $\pa\log$ to both sides of~\eqref{eqfzgzCzNz}. We find
\beglab{eqdefPNz}
\pa\log f_0(z) = \pa\log C_0(z,w) + P\big(C(z,w)\big) \pa C,
\quad \text{where} \ens
P(R) \defeq \frac{N_0'(R)}{N_0(R)} \in \C[[R]].
\edla
Since the \lhs\ does not depend on~$w$, when applying $\ov\pa \defeq
\frac{\pa\,\,}{\pa w}$ we get
\begla
0 = \ov\pa\big(\pa\log C_0(z,w)\big) + P'\big(C(z,w)\big) \ov\pa C \pa C
+P\big(C(z,w)\big) \ov\pa\pa C,
\edla
which we rewrite as
\beglab{eqODEPCzw}
z w D_0(z,w) P'\big(C(z,w)\big) + E_0(z,w) P\big(C(z,w)\big) = H(z,w)
\edla
with
$D_0 \defeq \frac{\ov\pa C \pa C}{zw}= (C_0+w\ov\pa C_0)(C_0+z\pa C_0)$,
$E_0 \defeq \ov\pa \pa C = C_0 + w\ov\pa C_0 + z\pa C_0
+ z w \ov\pa\pa C_0$
and
$H \vphantom{\dfrac{\ov\pa C \pa C}{zw}} \defeq - \ov\pa\big(\pa\log C_0(z,w)\big)$.

\bg

\noindent \textbf{(c)\,}
The point is that, on the one hand, since we assume $C(z,w) \in \C\{z,w\}$, we also
have
$C_0(z,w) = \frac{C(z,w)}{zw} \in \C\{z,w\}$, whence
\begla
D_0(z,w), E_0(z,w), H(z,w) \in \C\{z,w\}
\edla
and, on the other hand, $C_0$, $D_0$ and~$E_0$ have constant term equal to~$1$.
Because of this, the differential equation~\eqref{eqODEPCzw} will
entail that
\beglab{eqCVPR}
P(R) \in \C\{R\}.
\edla

Here is the proof of~\eqref{eqCVPR}: by restricting to the complex
line $\{w=z\}$ and using the principal branch of the square root
$c_0(z) \defeq C_0(z,z)^{1/2} \in \C\{z\}$, we get
\begla
C(z,z) = c(z)^2, \quad\text{where}\ens c(z) \defeq z c_0(z),
\edla
and~\eqref{eqODEPCzw} entails
\beglab{eqODEPzezh}
z^2 P'\big(c(z)^2\big) + e_0(z) P\big(c(z)^2\big) = h(z),
\quad\text{where}\ens
e_0(z) \defeq \frac{E_0(z,z)}{D_0(z,z)},\;
h(z) \defeq \frac{H(z,z)}{D_0(z,z)}.
\edla
Since $c_0(0)=1$, we can view $r=c(z)$ as an element of the group of
tangent-to-identity biholomorphisms~$\cG_1$. Let us denote its inverse
by
\begla
z = d(r) = r d_0(r) \in\cG_1
\quad \text{(with $d_0(0)=1$).}
\edla
Evaluate~\eqref{eqODEPzezh} at $z=d(r)$ and divide the resulting
equation by $d_0(r)^2$:
\beglab{eqODEPrdeux}
r^2 P'(r^2) + a_0(r) P(r^2) = b(r)
\quad 
\text{with}\ens
a_0(r)\defeq \frac{e_0\big(d(r)\big)}{d_0(r)^2},
\;
b(r)\defeq \frac{h\big(d(r)\big)}{d_0(r)^2} \;\in\C\{r\}.
\edla
Extract the even parts of~$a_0$ and~$b$:
\begla
\frac{a_0(r)+a_0(-r)}{2} \eqqcolon A_0(r^2), \quad
\frac{b(r)+b(-r)}{2} \eqqcolon B(r^2), \quad A_0(R),\;B(R) \in \C\{R\}.
\edla
The even part of~\eqref{eqODEPrdeux} is
\begla
R P'(R) + A_0(R)P(R) = B(R).
\edla
Since $A_0(0)=1$, this linear non-homogeneous ODE has a regular
singular point at $R=0$ and its unique formal solution $P(R)$ must
thus be convergent.

\bg

\noindent \textbf{(d)\,}
The definition of~$P$ given in~\eqref{eqdefPNz} yields
\begla
N_0(R) = \exp\Big( \int_0^R P(R')\,\dd R' \Big) \in \C\{R\},
\edla
thus $N\in\cG_1$, whence, on the one hand,
$M\in\cG_1$ and, on the other hand, $f(z) g(w) \in\C\{z,w\}$
by~\eqref{eqfgNC}, which concludes the proof.
      \end{proof}


\section{Corrigendum to \cite{CSSW}}

\noindent {\bf page 48, Section 2.2} 

\nidt
non uni{\bf queness} of Formal Normal Forms

\nidt
In Lemma 3, after H(z)= ...,  add {\bf and any conjugacy between $N_1$ and $N_2$ is of this form.}
\medskip

\noindent {\bf page 50}

\nidt
In the proof of Lemma 5, replace the first line by
{\bf As $|\Phi^*(z)|^2=|z|^2$, convergence of $|\Psi|^2$ implies the one of $|H|^2$ hence the one of $H$ and
$e^{2\pi i(\phi^*(z)+b(|z|^2))}$. As the terms in $|z|^{2s}$ in $\phi^*(z)+b(|z|^2)$ originate only in $b$,
the Cauchy-Hadamard formula ...}
\medskip

\noindent {\bf page 51}

\nidt
In the formula for $N_2$, replace ``such that $|\nu_2(z)|^2=...$" by {\bf such that  $\nu_2(|z|^2)=...$}
\medskip

\noindent  {\bf page 57}

\nidt
line 2, suppress {\it and let}

\nidt
line 2 of the proof  of Corollary 4, replace $z(1+\pi z)$ by $\lambda z(1+\pi z)$
\medskip

\noindent {\bf page 60, Theorem 3}

\nidt
In i) replace ``Let $g$ be as in (5.1)" by {\bf Let $g$ define a generic family $g_r$ of circle diffeomorphisms, where $g_r=g|_{|z|=r}$ sends the circle of radius $r$ to the circle of radius $r(1+f(r^2))$.}

\nidt
In ii) replace ``Let $f\not=0$ be as in (5.1)" by {\bf Whatever be $f$}
\medskip

\noindent {\bf page 61} 

\nidt
In Lemma 14, replace ``the coefficients $f_j(t)$ and $g_{jk}(t)$ are polynomial functions of $t\in \C$" by 
{\bf ``the coefficients $f_j(t)$ and $g_{jk}(t)$ are affine functions of $t\in \C$"}

\nidt
 In the proof of Lemma 14 line 2, 
replace the cohomological equation by
$$g(t;z)-n^*(t;|z|^2)+\varphi^*(t;F(t,z))-\varphi^*(t;z)=0.$$
\noindent In the proof of Lemma 14 line 5, 
replace ``since $f_j(t)$ and $g_{pq}(t)$ are polynomial in $t$" by
{\bf ``since $f_j(t)$ and $g_{jk}(t)$ are affine in $t$"}.

\nidt Last-but-one line: replace ``affine subspace'' by \textbf{set}.


\newpage 

\end{document}